\documentclass[12pt, reqno]{amsart}

\usepackage[english]{babel}
\usepackage{amsthm} 
\usepackage{amssymb}
\usepackage{amsmath}
\usepackage{hyperref}
\usepackage{amsfonts}
\usepackage{stmaryrd}
\usepackage{fancyhdr}
\usepackage[nocompress]{cite}
\bibliographystyle{ieeetr}


\usepackage{graphicx,color}
\usepackage{epstopdf}
\epstopdfsetup{update}
\usepackage[left=1.1in, right=1.1in, top=1.1in, bottom=1.1in]{geometry}
\usepackage{mathrsfs}  
\usepackage{subfig}

\newtheorem{theorem}{Theorem}[section]
\newtheorem{prop}[theorem]{Proposition}

\newtheorem{hypothesis}[theorem]{Hypothesis}
\newtheorem{lemma}[theorem]{Lemma}
\newtheorem{definition}[theorem]{Definition}
\newtheorem{remark}[theorem]{Note}

\newtheorem{corollary}[theorem]{Corollary}

\newtheorem{example}[theorem]{Example}

\newcommand{\A}{\mathcal{A}}

\newcommand{\N}{\mathbb{N}}
\newcommand{\E}{\mathbb{E}}
\newcommand{\cL}{\mathcal{L}}

\newcommand{\cP}{\mathcal{P}}

\newcommand{\hf}{\hfill$\Box$} 
\newcommand{\pa}{\partial}
\newcommand{\lv}{\left\vert}
\newcommand{\rv}{\right\vert}
\newcommand{\be}{\begin{equation}}
\newcommand{\ee}{\end{equation}}
\newcommand{\dd}{,{\dots},}

\newcommand{\CVone}{D_{V_1}(\R)}
\newcommand{\CV}{D_{V}(\R^N)}

\newcommand{\Xx}{X_t^{(x)}}
\newcommand{\J}{J}

\newcommand{\R}{\mathbb{R}}


\usepackage{booktabs} 
\usepackage{array} 
\usepackage{paralist} 
\usepackage{verbatim} 
\usepackage{fancyvrb}
\usepackage{float}
\usepackage{caption}
\usepackage{bbm} 

\usepackage{hyperref}
\usepackage{url}
\usepackage[toc,page]{appendix}
\usepackage{color}

\usepackage[shortlabels]{enumitem} 

\newcommand{\jacobian}[2]{\frac{\partial}{\partial #2} #1}  



\newcommand{\DV}{\Xi}



\begin{document}
\title[]{Uniform in time estimates  for the weak error of the Euler method for SDEs and a Pathwise Approach to Derivative Estimates for Diffusion Semigroups }
\author{D. Crisan, P. Dobson and M. Ottobre}

\address{\noindent \textsc{Dan Crisan, Department of Mathematics, Imperial College London, Huxley Building, 180 Queen's Gate,  London SW7 2AZ, UK}} 
\email{d.crisan@imperial.ac.uk}
\address{\noindent \textsc{Paul Dobson, Department of Mathematics, Heriot-Watt University, Edinburgh EH14 4AS, UK}} 
\email{pd14@hw.ac.uk}
\address{\noindent \textsc{Michela Ottobre, Department of Mathematics, Heriot-Watt University, Edinburgh EH14 4AS, UK}} 
\email{m.ottobre@hw.ac.uk}

\begin{abstract}  We present a criterion for uniform in time convergence of the weak error of the  Euler scheme for Stochastic Differential equations (SDEs). The criterion requires i) exponential decay in time of the space-derivatives of the semigroup associated with the SDE and  ii) bounds on (some) moments of the Euler approximation.  We show by means of examples (and counterexamples) how both i) and ii) are needed to obtain the desired result. If the weak error converges to zero uniformly in time, then convergence of ergodic averages follows as well. 
 We also show that Lyapunov-type conditions are neither sufficient nor necessary in order for the weak error of the Euler approximation to converge uniformly in time and clarify relations between the validity of Lyapunov conditions, i) and ii). 

Conditions for ii) to hold are studied in the literature.   Here we produce sufficient conditions for i) to hold.  The study of derivative estimates has attracted a lot of attention, however not many results are known in order to guarantee exponentially fast decay of the derivatives.   Exponential decay of derivatives  typically follows from coercive-type conditions involving the vector fields appearing in the equation and their commutators; here we focus on the case in which  such coercive-type conditions are non-uniform in space. To the best of our knowledge, this situation is unexplored in the literature, at least on a systematic level. To obtain results under such space-inhomogeneous conditions we initiate a pathwise approach to the study of derivative estimates  for diffusion semigroups and combine this pathwise method with the use of Large Deviation Principles. 
 \vspace{5pt}

{\sc Keywords.} Stochastic Differential Equations, Euler method for SDEs,  Markov Semigroups, Derivative estimates.  
\vspace{5pt}
\\
{\sc AMS Classification (MSC 2010).} 65C20, 65C30, 60H10 , 65G99, 47D07, 60J60. 

\end{abstract}

\date{\today}

\maketitle

%
%
%
%
%


\section{Introduction}
 We consider  stochastic differential equations (SDEs) in $\R^N$ of the form \be\label{SDE}
X_t=X_0 + \int_0^t V_0(X_s) ds + \sqrt{2} \sum_{i=1}^d\int_0^t  V_i(X_s) \circ  dB^i (s), \quad X_0=x,
\ee
where $V_0,\ldots,V_d$ are smooth  vector fields on $\R^N$, $\circ$ denotes Stratonovich integration and  $B^1(t) \dd B^d(t)$  are one dimensional independent standard Brownian motions.   In the first part of this paper we will be concerned with the study of  numerical approximations for  SDEs of the form \eqref{SDE}; in particular we will produce criteria in order for  the (explicit) Euler approximation of  the SDE \eqref{SDE}  to have   weak error which converges  uniformly in time.
To make these criteria easy to use in practice, in the second part of the paper we produce results  which, while of independent interest,  can be employed to check when such criteria are satisfied.

Let us  explain the main results of this paper in more detail. Let $X_t$ be the solution of \eqref{SDE},  $\{Y_{t_n}^{\delta}\}_{n \in \N}$ the corresponding Euler approximation with time-step $\delta$ (see \eqref{eq:eulerdiscrete}) and $\{Y_t^{\delta}\}_{t \geq 0}$ a continuous-time interpolant of $\{Y_{t_n}^{\delta}\}$ (see \eqref{eq:continterp}). Weak error bounds  typically studied in the literature are of the form
\begin{equation}
\sup_{t \in [0,T]}\lvert\mathbb{E}[\varphi(X_t)] - \mathbb{E}[\varphi(Y_t^\delta)]\rvert \leq K(T) \delta\, ,
\end{equation}
where $\varphi$ is a  sufficiently smooth and bounded function, 
see for example \cite[Section 9.7]{KloedenPlaten}. 
In the simplest case  $K(T)$ is of the form $K(T)=ce^{cT}$, for some constant $c>0$. 
Here we study sufficient conditions in order to guarantee the validity of weak error bounds which are uniform in time, i.e. of the form
\begin{equation}\label{stronguniform}
\sup_{t \geq 0}\lvert\mathbb{E}[\varphi(X_t)] - \mathbb{E}[\varphi(Y_t^\delta)]\rvert \leq K \delta, 
\end{equation}
where, crucially, $K$ is {\em independent of time} (although it will depend on $\varphi$ and on the coefficients of the equation).  
Clearly, bounds of the form \eqref{stronguniform} cannot hold in general. Whether they hold or not will depend on both the SDE and the chosen numerical method. As already mentioned, in this paper we consider  the Euler method, but the approach we take is general and can be extended to a wider class of methods. Note that if \eqref{stronguniform} holds,   one does not need to adapt the time- step during the simulation to keep a given threshold accuracy. So this line of research is in a different spirit to adaptive time-stepping methods such as those introduced in \cite{Lamberton, Lord}.  

 Sufficient conditions in order for estimates of the type \eqref{stronguniform} to hold are contained in Section \ref{sec:Eulerestimate}, see  Theorem \ref{thm:globalerror}. To explain the content of  such a  theorem, let us briefly recall the definition of the Markov Semigroup $\{\cP_t\}_{t\geq 0}$  acting  on the space of bounded and measurable functions $f:\R^N\to \R$  and associated to the SDE \eqref{SDE}, namely
\begin{equation}\label{eq:semigpdef}
    \cP_tf(x) := \mathbb{E}[f(X_t) \vert X_0=x], \quad x\in \R^N.
\end{equation}
Theorem \ref{thm:globalerror} may then be informally stated as follows: suppose the SDE \eqref{SDE} is elliptic and the coefficients $V_0, V_1, \dd ,V_d$ grow at most polynomially; if

\smallskip
{\bf i)} the space-derivatives of the semigroup $\cP_t$ decay exponentially fast in time (precise statement of this assumption is in Hypothesis \ref{hyp:euler} \ref{ass:gradest}) and

\smallskip
{\bf ii)} some moments of the Euler approximation $\{Y_{t_n}^\delta\}$ of $X_t$ are uniformly bounded in time (see Hypothesis \ref{hyp:euler} \ref{hypmom}), 

\medskip
\noindent
then  \eqref{stronguniform} holds.  Note that while Theorem \ref{thm:globalerror} assumes that the noise in \eqref{SDE} is non-degenerate, see Hypothesis \ref{hyp:euler} \ref{ass:ellipiticity}, we believe that our result is stable to relaxing this assumption and this will be the subject of future work. Indeed, some of the examples that we exhibit cover the degenerate noise case as well.  

Sufficient conditions in order for ii) to hold are discussed for example in \cite{Mattingly, Talay, TalayTubaro}, we will be more precise on this point in  Note \ref{note:reducedassumptions}. 
So in this paper we focus on criteria in order for i) to hold. We moreover give examples to show that i) $\not\Rightarrow$ ii) (Example \ref{ex:Grusin}),  ii) $\not \Rightarrow$ i) (Example \ref{ex:circlewithconfinement}) and neither i) nor ii) by themselves imply \eqref{stronguniform}, i.e. i) $\not\Rightarrow$ \eqref{stronguniform} (Example \ref{ex:Grusin}) and ii) $\not\Rightarrow$ \eqref{stronguniform} (Example \ref{ex:circlewithconfinement}). Furthermore, because the uniform convergence \eqref{stronguniform} implies convergence of the ergodic averages, our criterion gives also a sufficient condition for the latter convergence to hold, see Corollary \ref{cor:ergodicaverages}. 

We also discuss the relation between i), ii) and some Lyapunov-type conditions; we do this in detail in Note \ref{note:Lyapunov} and there we will be also more precise about the relation between our results and results based on Lyapunov conditions that can be found in the literature. For the time being let us just notice that in this paper we provide examples to show that Lyapunov conditions are not sufficient in order for \eqref{stronguniform} to hold -- we do this   both in the case in which the noise in the SDE is  degenerate (see Example \ref{ex:circlewithconfinement}) and when it is non-degenerate (see Example \ref{ex:xcubed}). As proven in \cite[Section 3]{TalayTubaro},  under some assumptions on the coefficients of the SDE,   Lyapunov conditions (for example of the type \eqref{eq:Lyapcond}) are sufficient to obtain ii) (i.e. boundedness  of some moments of the Euler approximation); however, as we have already said, they are not sufficient to obtain \eqref{stronguniform}. Viceversa, Lyapunov conditions are also not  necessary in order to obtain uniform approximations, see Note \ref{note:Lyapunov} for clarifications.   

Let us now comment more on point i). Assuming that  $V$ is some direction\footnote{More precisely, $V$ is a vector field  on $\R^N$ and, as we will recall in Section \ref{sec2}, there exists a canonical identification between vector fields and first order differential operators, see \eqref{ID}.} along which the semigroup $\cP_t$ is differentiable (so that the LHS of \eqref{expdecayintro} below makes sense),  we will give conditions in order for estimates of the following type to hold
\begin{equation}\label{expdecayintro}
\lvert V\cP_tf(x) \rvert  \leq  c  \, u(x) e^{-\lambda_0 t}\, ,  \quad  \forall  t>0\,, 
\end{equation}
 for some constant $c>0$ (which depends on $f$), see Theorem \ref{prop:OAC} and Theorem \ref{prop:LOAChd}, and for some appropriate positive function $u$ to be introduced in Section \ref{sec:DonskerVaradhan} below.  We note in passing that estimates of the form \eqref{expdecayintro} are more general than those required in Section \ref{sec:Eulerestimate}, see \eqref{eq:expdecayuptoorder4nonuniform}, for Theorem \ref{thm:globalerror} to hold; indeed (because Theorem \ref{thm:globalerror} refers to elliptic SDEs), in that context only derivative estimates in the coordinate directions are needed. The bound \eqref{expdecayintro} is more general in the sense that $V$ can be any direction and we will further clarify the relation between \eqref{expdecayintro} and \eqref{eq:expdecayuptoorder4nonuniform} in Note  \ref{Note:ellipticity}.

The study of derivative estimates for Markov semigroups has a long history and it has been tackled by using various approaches,  see e.g. \cite{Lunardi, Priola, Dragoni, Bakry, mythesis} and references therein.  As is well-known, without any quantitative conditions on the vector fields appearing in \eqref{SDE} (i.e. if only ellipticity/hypoellipticity or other regularity assumptions are made),  only the following smoothing-type estimates hold
$$
\lvert V\cP_tf(x) \rvert  \leq  u(x) \frac{1}{t^{\gamma}}\, ,  \quad  \mbox{for }  t \in (0,1) \, ,
$$
where $\gamma>0$ is an appropriate exponent which depends on the direction $V$, and $f$ is continuous and bounded,  see \cite{KusStrder, mythesis, MV_I, Bakry, Nee, Lunardi},  and most of the literature is devoted to estimates of the above type.   In \cite{CrisanOttobre} the authors introduced a sufficient condition in order for  \eqref{expdecayintro} to hold, the so-called {\em Obtuse Angle Condition} (OAC) (see Appendix \ref{app:UFG} for a precise statement of the results of \cite{CrisanOttobre}); we say that the OAC is satisfied by the vector fields $V$ and $V_0$ (where $V_0$ is the drift of \eqref{SDE}) if 
\begin{equation}\label{OAC1d}
\xi^T ([V,V_0](x))   (V(x))^T \xi  \leq -\lambda \, \lvert  \xi^T V(x)\rvert^2, \quad \mbox{ for every } x, \xi\in\R^N\,,
\end{equation}
where the superscript $^T$ denotes transpose (so that e.g. $\xi^T$ is a row vector). 
 This is a coercivity-type condition and in the above such a coercivity is required to hold uniformly in space in the sense that $\lambda>0$ is a constant independent of $x$. In contrast,  in this paper we discuss the case in which $\lambda$ is allowed to be a continuous function of $x$. That is, we consider the following condition
\begin{equation}\label{LOAC1d}
\xi^T\left([V,V_0](x) \right)  (V(x))^T \xi \leq -\lambda(x) \lvert \xi^T V(x)\rvert^2, \quad \mbox{ for every } x, \xi\in\R^N \, ,
\end{equation}
which we refer to as  the 
{\em Local Obtuse Angle Condition} (LOAC).  In Section \ref{sec:pathwiseOAC} we give a simple example to further explain why we name \eqref{LOAC1d} the {\em local} OAC, see comments after equation \eqref{eq:arctanintro}. 
Under no further assumptions  on the function $\lambda: \R^N \rightarrow \R$ (neither on the regularity nor on the sign of such a function)  we show that the following holds
\begin{equation}\label{gradest}
	\lvert V\cP_tf(x)\rvert \leq c \, \mathbb{E}\left[\exp\left(-2\int_0^t\lambda(X_s)ds\right) \right]^{\frac{1}{2}}, 
	\end{equation}
for some constant $c>0$.  In order to obtain estimates of the form \eqref{gradest} under the local condition \eqref{LOAC1d},  we need to gain  detailed control over  the paths of the diffusion $X_t$; for this reason we initiate in this paper a pathwise version of the Bakry-Emery approach \cite{Bakry} to the study of derivative estimates for Markov semigroups. This is the content of Section \ref{sec:pathwiseOAC}.  
Clearly, if  $\lambda(x) \geq \lambda_0>0$ for some constant $\lambda_0$ then \eqref{expdecayintro} (with $u(x)$ constant) follows from \eqref{gradest}. If $\lambda(x)>0$ is just positive, i.e. if it is not uniformly  bounded from below by a positive constant, or even negative for some $x \in \R^N$, one can still obtain \eqref{expdecayintro} from \eqref{gradest}. This is what we show in Section  \ref{sec:DonskerVaradhan}. Roughly speaking, in Section \ref{sec:DonskerVaradhan} we show that if there exists a set $F$ such that $\lambda(x)\geq \lambda_0>0$ for every $x\in F$ and the processs spends enough time in such a set, then one can still obtain \eqref{expdecayintro} from \eqref{gradest}. In order to obtain such results we make use of Large Deviation principles; in particular, we use (and generalise) some estimates on  functionals of the  {occupation measure}  which have been obtained by Donsker and Varadhan in \cite{Donsker1}-\cite{Donsker4}. 
This provides a link between the study of derivative estimates for Markov Semigroups and Large Deviations theory and allows one to give an explicit characterization of the function $u(x)$ appearing in \eqref{expdecayintro}.

This paper is organised as follows. In Section \ref{sec2} we set out the  standing notation for the rest of the paper; Section \ref{sec:Eulerestimate} contains the main criterion, Theorem \ref{thm:globalerror},  in order for uniform in time bounds \eqref{stronguniform} on the weak error of the Euler scheme to hold. Section \ref{sec:pathwiseOAC}  presents the pathwise approach developed to obtain estimates of the type \eqref{gradest} from the non-uniform coercivity  condition \eqref{LOAC1d}. This pathwise approach is, to the best of our knowledge, new and inspired by the Bakry-Emery approach \cite{Bakry};  we explain in Note \ref{compBakry} the reason why, under non-uniform coercivity  conditions, classical Bakry-type semigroup techniques can no longer be used. To present the main ideas without cumbersome notations, all the results of Section \ref{sec:pathwiseOAC} are presented in one dimension first, and then extended to SDEs in $\R^N$; in the latter case we impose some extra assumptions on the commutators between the vector fields appearing in the SDE \eqref{SDE} - in short, we assume a commutator structure which is similar to the one assumed in the Hypocoercivity Theory \cite{Villani}. Full extensions  to $\R^N$ (i.e. extensions that require less assumptions on the commutator structure) are lengthy and significantly more technical, and they will be the tackled in  \cite{Paul}.  In Section \ref{sec:DonskerVaradhan} we explain how to obtain exponential decay estimates of the form \eqref{expdecayintro} once estimates of the type \eqref{gradest} have been derived by using the results of Section \ref{sec:pathwiseOAC}.  The results of Section \ref{sec:DonskerVaradhan} are completely dimension-independent, so they are presented straight away in $\R^N$. Note that while the results of  Section \ref{sec:Eulerestimate} hold for elliptic diffusions, no such ellipticity assumption is enforced in subsequent sections and the results of Section \ref{sec:pathwiseOAC}  and Section \ref{sec:DonskerVaradhan}  hold for any hypoelliptic or even UFG diffusion (for the definition of UFG diffusion please see Appendix \ref{app:UFG}). 
Section \ref{sec6} contains several examples and counterexamples to illustrate cases where the results developed in this paper apply. Finally, Appendix \ref{app:UFG} contains some background notions, for the readers' convenience, while  Appendix B contains auxiliary proofs.

\section{Notation and Preliminaries}\label{sec2}
Given a vector field $V=V(x)$ on $\R^N$,  $V=(V^1(x), V^2(x),$ $..., V^N(x))$ $x \in \R^N$, we refer to the functions $\{V^j(x)\}_{1 \leq j \leq N}$ as the {\em components} or {\em coefficients} of the vector field.   We  say that a vector field is smooth or that it is $C^{\infty}$ if all the components $V^j(x)$, $j=1 \dd N$, are $C^{\infty}$ functions.  As a {\bf standing assumption},  throughout the paper we only consider vector fields which are smooth. We do not repeat this assumption in all the statements. 
We can interpret $V$ both as a vector-valued function on $\R^N$ and as a first order differential operator on $\R^N$, through the canonical identification
\begin{equation}\label{ID}
V=(V^1(x), V^2(x) \dd V^N(x)) \quad \mbox{ or }
\quad V= \sum_{j=1}^NV^j(x)\pa_j, 
 \quad x \in \R^N, \pa_j= \pa_{x^j} \,.
\end{equation}
We will use this identification throughout and we will not use different notations to distinguish the vector field from the differential operator, but will make comments when confusion may arise. 
Throughout this paper we shall denote by $\partial_i V^j$ the $i$-th derivative of the $j$-th component of $V$; if $N=1$ then we will write the first (second,  respectively) derivative of the coefficient as $V'(x)$ ($V''(x)$, respectively). We shall use the notation $V^{(n)}$ to denote the $n$-th order differential operator obtained by iterating $V$ $n$ times, that is
\begin{equation*}
    V^{(n)}f(x) = (\underbrace{V\ldots V}_{n\text{ times}}f)(x).
\end{equation*}
If $f(t,x)$ is a function of time and space, as customary $\pa_t f(t,x)$ and $\pa_{x^i}f(t,x)$, respectively,  denote the time derivative and the derivative in the space-coordinate direction $x^i$, respectively. Given two differential operators $V$ and $W$, the commutator between $V$ and $W$ is defined as
$$
[V,W]:= VW -WV\,, 
$$
and it is a first order differential operator. 
Equivalently, when we view $V$ and $W$ as vector fields we may define the commutator of $V$ and $W$ as
$$
[V,W](x):= \frac{\partial W(x)}{\partial x}V(x) -\frac{\partial V(x)}{\partial x} W(x)\,.
$$
Here $\frac{\partial W(x)}{\partial x}$ ($\frac{\partial V(x)}{\partial x}$, respectively) denotes the Jacobian matrix of $W$ ($V$, respectively), i.e. the $ij$-th entry of the matrix $\frac{\partial W(x)}{\partial x}$ is 
$$
\left(\frac{\partial W(x)}{\partial x}\right)_{ij}:=\partial_j W^i(x) \,.
$$
When considering the SDE \eqref{SDE}, we will often want to emphasize the dependence of the solution on the initial datum; to  this end we will use the notation $X_t^{(x)}$. To be more explicit, we denote by $X_t^{(x)}$ the solution to the following SDE in Stratonovich form, 
\begin{equation}\label{eq:SDE}
dX_t^{(x)} = V_0(X_t^{(x)})dt+\sqrt{2}\sum_{i=1}^d V_i(X_t^{(x)}) \circ dB_t^i,\quad  X_0^{(x)} = x\in\R^N,
\end{equation}
where the drift and diffusion coefficients are smooth and such that there is a pathwise unique strong solution to \eqref{eq:SDE}. We may write \eqref{eq:SDE} in It\^o form as
\begin{equation}\label{eq:SDEito}
dX_t^{(x)} = U_0(X_t^{(x)})dt+\sqrt{2}\sum_{i=1}^d V_i(X_t^{(x)})  dB_t^i, \quad X_0^{(x)} = x\in\R^N,
\end{equation} 
where $U_0$ denotes the drift term in the corresponding It\^o form, i.e.
\begin{equation}\label{eq:Itodrift}
U_0^i(x) = V_0^i(x) + \sum_{k=1}^d\sum_{j=1}^N  V_k^j(x) \partial_jV_k^i(x).
\end{equation}
For the sake of clarity we emphasize again that in \eqref{eq:SDE} and \eqref{eq:SDEito} (as well as in \eqref{SDE}) $B^1(t), \dd , B^d(t)$ are one-dimensional independent Brownian motions. 
We denote by $\cL$ the generator of the SDE, i.e. the second order differential operator defined on suitably smooth functions $f:\R^N\to\R$ as
\begin{equation*}
\cL f(x):=V_0f(x)+\sum_{k=1}^dV_k^{(2)}f(x) = \sum_{i=1}^N U_0^i(x)\partial_if(x) + \sum_{i,j=1}^N V_k^i(x)V_k^j(x) \partial_i\partial_jf(x)\, ,
\end{equation*}
and by  $\cL_{(v)}$ the operator obtained from the one defined above by  ``freezing" the value of the coefficients to $v$; that is, 
\begin{equation}\label{starstar}
(\cL_{(v)}f)(y)= \sum_{i=1}^N U_0^i(v)(\partial_if)(y) + \sum_{i,j=1}^N V_k^i(v)V_k^j(v) (\partial_i\partial_jf)(y).
\end{equation}

We shall also use the following function spaces:
\begin{itemize}
    \item  $C_b(\R^N)$ is  the set of all continuous and bounded functions $f:\R^N\to\R$, endowed with the supremum norm
    $$
    \lVert f\rVert_\infty :=\sup_{x\in\R^N}\lvert f(x)\rvert.
    $$
    \item $C_b^n(\R^N)$ is the space of $n$-times differentiable and bounded functions $f:\R^N\to\R$ with bounded derivatives (of order up to $n$),  endowed with the norm
    \begin{align*}
        \lVert f \rVert_{C_b^n} = \sum \lVert \partial_{1}^{\alpha_1}\ldots\partial_{N}^{\alpha_N} f \rVert_\infty, \quad 
    \end{align*}
where the sum is over indices $\alpha_j$'s such that $\sum_{k=1}^N\alpha_k \leq n$ and $\alpha_j \in \{0,  \ldots , n\}$ for every $j$. 
    The space of all infinitely differentiable functions with bounded derivatives of all orders will be denoted by $C_b^\infty(\R^N)$. 
    \end{itemize}
Finally, unless otherwise stated, all the vectors in $\R^N$ are assumed to be column vectors; so, for any $\xi \in \R^N$, $\xi^T$ is a row vector.      

\section{Uniform in time convergence  of the Euler scheme}\label{sec:Eulerestimate}

Let $\{Y_{t_n}^\delta\}_{n\in\N}$ be the (explicit)  Euler approximation with time-step $\delta$ of the SDE \eqref{eq:SDEito}, that is
\be\label{eq:eulerdiscrete}
Y_{t_{n+1}}^\delta = Y_{t_n}^\delta+U_0(Y_{t_n}^\delta) \delta +\sqrt{2} \sum_{k=1}^dV_k (Y_{t_n}^\delta) \Delta B_{t_n}^k, \quad Y_0^\delta=x\, ,
\ee

where $t_n=n\delta$  and $\Delta B_{t_n}=B_{t_{n+1}}-B_{t_n}$.
Define $\{Y_t^\delta\}_{t\geq 0}$ to be the continuous-time interpolant of $\{Y_{t_n}^\delta\}_{n\in\N}$, i.e.
\begin{align}\label{eq:continterp}
dY_t^\delta &= U_0(Y_{t_{n(t)}}^\delta) dt+ \sum_{k=1}^dV_k(Y_{t_{n(t)}}^\delta) dB_t^k, \quad t_{n(t)}=t_i \mbox{ for } t \in [t_i, t_{i+1}) \, ,\\
Y_0^{\delta} & = x. \nonumber
\end{align}
The Brownian motions appearing in \eqref{eq:eulerdiscrete} and in \eqref{eq:continterp} are the same as the one in \eqref{eq:SDE}. 
Note that the continuous-time process $Y_t^{\delta}$ and the discrete time process $Y_{t_n}^{\delta}$ coincide at the points $t_n$ of the mesh (hence the reason why we denote both of them by $Y_{\cdot}^{\delta}$ without risk of confusion). 

The main result of this section is Theorem \ref{thm:globalerror} which gives sufficient conditions under which the Euler scheme weakly approximates the underlying SDE {\em uniformly in time}. The full set of assumptions under which Theorem \ref{thm:globalerror} holds is Hypothesis \ref{hyp:euler} below.  Immediately after stating Theorem \ref{thm:globalerror}, we make several comments on Hypothesis \ref{hyp:euler} and we give a list of cases under which such assumptions are indeed satisfied (see Note \ref{note:reducedassumptions}, Note \ref{note: applicability} and Corollary \ref{cor:additivenoisesummary}). The requirement \eqref{eq:expdecayuptoorder4nonuniform} on the derivatives of the semigroup is then studied in Sections \ref{sec:pathwiseOAC} and \ref{sec:DonskerVaradhan}.

\begin{hypothesis}\label{hyp:euler}
\begin{enumerate}[(a)]
\item \label{ass:ellipiticity} For every $x\in\R^N$ there is a pathwise unique strong solution $\{X_t^{(x)}\}_{t\geq 0}$  to the SDE \eqref{eq:SDEito} and
 the vector fields $V_1,\ldots,V_d$ satisfy a uniform ellipticity condition, i.e. there exists some $\nu>0$ such that
$$
\sum_{k=1}^d \lvert \xi^T V_k(x) \rvert^2 \geq \nu \lvert \xi\rvert^2, \quad \mbox{ for all } \xi,x\in\R^N.
$$
	\item The vector fields $U_0,V_1, \ldots, V_d$ are smooth; both the vector fields themselves and their first and second order derivatives  have at most polynomial growth. That is, there exist some constants $K_1,K_2,K_3,p,q,q'\geq 0$ such that
\begin{align*}
\lvert U_0(x)\rvert + \sum_{k=1}^d \lvert V_k(x)\rvert &\leq K_1(1+\lvert x\rvert^p) \, ,\\
\sum_{i=1}^N\lvert \partial_iU_0(x)\rvert + \sum_{k=1}^d \sum_{i=1}^N\lvert \partial_iV_k(x)\rvert &\leq K_2(1+\lvert x\rvert^{q}) \, ,\\
\sum_{i=1}^N\sum_{j=1}^N\lvert \partial_{i,j}U_0(x)\rvert + \sum_{k=1}^d \sum_{i=1}^N\sum_{j=1}^N\lvert \partial_{i,j}V_k(x)\rvert &\leq K_3(1+\lvert x\rvert^{q'})\, .
\end{align*}	
	 \label{ass:lipschitz}
\item \label{ass:gradest} There exist a constant $\lambda_0>0$ and a positive function $u:\R^N\to\R$ such that for all $f\in C_b^4(\R^N)$ we have
\begin{equation}\label{eq:expdecayuptoorder4nonuniform}
\sum_{k=1}^4\sum_{i_1,\ldots, i_k=1}^d\lvert \partial_{i_1,\ldots,i_k}\cP_tf(x) \rvert \leq u(x) e^{-\lambda_0  t} \lVert f \rVert_{C^4_b} \, .
\end{equation}
\item \label{hypmom} Let $\gamma:= \max\{p+q, 2p\}$ and $\zeta:= \max\{p+q', p+q, 2q, 2p\}$. The function  $u$ appearing in \eqref{eq:expdecayuptoorder4nonuniform} is such that the following bounds hold: 
\begin{align}
    &K_4:=\sup_{s\geq 0}\mathbb{E}\left[\lvert (1+Y_{t_{n(s)}}^\delta\rvert^p)\left(1+ \lvert Y_s^\delta\rvert^{\gamma}\right)u(Y_s^\delta)\right]<\infty, \label{eq:uboundinexp} \\
    &K_5:=\sup_{s\geq 0}\mathbb{E}\left[(1+\lvert Y_{t_{n(s)}}^\delta\rvert^{2p})\left(1+\lvert Y_s^\delta\rvert^{\zeta}\right)u(Y_s^\delta)\right]<\infty, \label{eq:uboundinexp2}\\
    &K_6:=\sup_{s\geq 0}\mathbb{E}\left[(1+\lvert Y_{t_{n(s)}}^\delta\rvert^{4p})u(Y_s^\delta)\right]<\infty. \label{eq:uboundinexp3}
\end{align}
\end{enumerate}
\end{hypothesis}

\begin{theorem}\label{thm:globalerror}
Let Hypothesis \ref{hyp:euler} hold. Then the weak error of the Euler approximation $\{Y_t^\delta\}_{t\geq 0}$ of the SDE \eqref{eq:SDEito} converges to $0$, uniformly in time, as $\delta \to 0$; that is, there exists some constant $K$ depending only on  $\lambda, K_1,\ldots, K_6,d$ and $N$ (but not on $t$) such that for all $\varphi\in C_b^\infty(\R^N)$ and $\delta>0$ we have
\begin{equation}\label{eq:uniformweakconv}
\sup_{t\geq 0}\lvert\mathbb{E}[\varphi(X_t)] - \mathbb{E}[\varphi(Y_t^\delta)]\rvert \leq K \delta \lVert \varphi \rVert_{C^4_b}.
\end{equation}
\end{theorem}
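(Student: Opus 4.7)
The approach I would take is the classical Talay--Tubaro strategy, refined to produce bounds uniform in time by exploiting the exponential decay of derivatives of $\cP_t$ assumed in Hypothesis~\ref{hyp:euler}\ref{ass:gradest}. Fix $t\geq 0$, let $M$ be the largest integer with $t_M\leq t$, and write $\tilde{\cP}_\delta$ for the one-step transition operator of the Euler scheme. Combining the Markov property of both $X$ and $Y^\delta$ with the identity $\cP_\delta\cP_{t-t_{n+1}}=\cP_{t-t_n}$ yields the telescoping decomposition
\begin{equation}\label{plan:telescope}
\E[\vp(Y_{t_M}^\delta)]-\E[\vp(X_{t_M})] = \sum_{n=0}^{M-1}\E\!\left[(\tilde\cP_\delta - \cP_\delta)\bigl(\cP_{t-t_{n+1}}\vp\bigr)(Y_{t_n}^\delta)\right],
\end{equation}
with the residual contribution from $[t_M,t]$ being $O(\delta\lVert\vp\rVert_\infty)$ uniformly in $t$. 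An equivalent starting point, closer to the paper's notation, is the It\^o--based identity
$$\E[\vp(Y_t^\delta)]-\E[\vp(X_t)]=\int_0^t \E\!\left[\bigl((\cL_{(Y_{n_s}^\delta)} - \cL_{(Y_s^\delta)})\cP_{t-s}\vp\bigr)(Y_s^\delta)\right]ds,$$
obtained by applying It\^o's formula to $s\mapsto \cP_{t-s}\vp(Y_s^\delta)$, using the backward Kolmogorov equation $\pa_s\cP_{t-s}\vp=-\cL\cP_{t-s}\vp$ and the definition \eqref{starstar} of $\cL_{(v)}$.

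The core step is to estimate each one-step error in \eqref{plan:telescope}. By an It\^o--Taylor expansion to third order in $\delta$ of both $\cP_\delta\Phi(y)$ and $\tilde\cP_\delta\Phi(y)$, with $\Phi:=\cP_{t-t_{n+1}}\vp$, the $O(1)$ and $O(\delta)$ terms coincide because the Euler one-step matches the mean and covariance of the true SDE to first order. The leading contribution to the local error therefore arises at order $\delta^2$ from the discrepancy between $\cL^2\Phi$, in which the coefficients of $\cL$ are genuine functions of the spatial variable, and $\cL_{(y)}^2\Phi$, in which those coefficients are frozen at $y$ because the Euler coefficients are constant on each sub-interval. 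Since $\cL^2$ involves derivatives of $\Phi$ up to fourth order, this is exactly where $C^4_b$ regularity and the fourth-order bound in \eqref{eq:expdecayuptoorder4nonuniform} enter. Tracking the polynomial factors generated by Hypothesis~\ref{hyp:euler}\ref{ass:lipschitz}, the local error takes the form
$$ \lvert(\tilde\cP_\delta - \cP_\delta)\Phi(y)\rvert \leq C\delta^2\sum_{|\alpha|\leq 4}\E\!\left[Q_\alpha(y,\xi_s)\lvert \pa^\alpha \Phi(\xi_s)\rvert\right], $$
where $\xi_s$ denotes intermediate values of the continuous interpolant $Y_s^\delta$ for $s\in[t_n,t_{n+1}]$ and the $Q_\alpha$ are polynomials in $(y,\xi_s)$ whose exponents are controlled by $p,q,q'$ from~\ref{ass:lipschitz}.

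To conclude, I would take the total expectation in \eqref{plan:telescope}, apply \eqref{eq:expdecayuptoorder4nonuniform} to replace $\lvert \pa^\alpha\Phi(\xi_s)\rvert$ by $u(\xi_s)e^{-\lambda_0(t-t_{n+1})}\lVert\vp\rVert_{C^4_b}$, and then bound each $\E[Q_\alpha(Y_{t_n}^\delta,\xi_s)u(\xi_s)]$ by one of the three constants $K_4,K_5,K_6$ of Hypothesis~\ref{hyp:euler}\ref{hypmom}. The exponents $\gamma,\zeta$ and the power $4p$ in \eqref{eq:uboundinexp}--\eqref{eq:uboundinexp3} are chosen exactly so that every monomial produced by the $\cL^2-\cL_{(y)}^2$ expansion can be majorised. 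Assembling these estimates gives
$$ \lvert\E[\vp(X_t)]-\E[\vp(Y_t^\delta)]\rvert \leq C\lVert\vp\rVert_{C^4_b}\,\delta^2\sum_{n=0}^{M-1}e^{-\lambda_0(t-t_{n+1})} \leq \frac{C}{\lambda_0}\lVert\vp\rVert_{C^4_b}\,\delta, $$
the last inequality because $\delta\sum_{n=0}^{M-1}e^{-\lambda_0(t-t_{n+1})}\leq 1/(1-e^{-\lambda_0\delta})$ is independent of $t$, which is the bound \eqref{eq:uniformweakconv}. The main technical obstacle I anticipate is the bookkeeping in the second step: one must verify that every polynomial weight produced by expanding $\cL^2-\cL_{(y)}^2$ is term-by-term dominated by one of the three specific combinations appearing in \eqref{eq:uboundinexp}--\eqref{eq:uboundinexp3}, which is precisely why those three (rather than a single universal) moment-weight quantities are built into Hypothesis~\ref{hyp:euler}\ref{hypmom}. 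Ellipticity~\ref{ass:ellipiticity} enters only to guarantee enough spatial regularity of $(s,y)\mapsto\cP_{t-s}\vp(y)$ for It\^o's formula to apply.
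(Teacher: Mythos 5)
Your proposal follows essentially the same route as the paper's proof: the paper starts from exactly the It\^o-based identity you mention (obtained by applying It\^o to $(\cP_{t-s}\varphi)(X_s)$ and Lemma~\ref{lem:ItoforY} to $(\cP_{t-s}\varphi)(Y_s^\delta)$), then — as in your one-step error analysis — applies Lemma~\ref{lem:ItoforY} once more to turn the integrand $(\cL_{(Y_r^\delta)}-\cL_{(Y_{n_r}^\delta)})\cP_{t-r}\varphi$ into a double integral of $\cL\cL\cP_{t-r}\varphi$ and $\cL_{(Y_{n_r}^\delta)}\cL_{(Y_{n_r}^\delta)}\cP_{t-r}\varphi$ type terms, each $O(\delta)$ on the inner interval, and finally bounds the fourth-order derivatives by \eqref{eq:expdecayuptoorder4nonuniform} and the polynomial weights by $K_4,K_5,K_6$ exactly as you outline. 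Your discrete telescoping formulation with $\tilde\cP_\delta$ is just the time-$t_n$-grid version of the paper's continuous double-integral bookkeeping, and the exponential sum in $e^{-\lambda_0(t-t_{n+1})}$ closes the argument in the same way.
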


If \eqref{eq:uniformweakconv} holds then we say that the weak error of the Euler approximation  converges to zero uniformly in time and the convergence is of order $\delta$. Before proving Theorem \ref{thm:globalerror}, we make several comments on the statement of the theorem.   

\begin{remark}\label{note:reducedassumptions}
\textup{Some comments on the above result.}
\begin{itemize}
\item \textup{Conditions are given in 
\cite[Corollary 7.5]{Mattingly}, \cite[Theorem 2]{TalayTubaro},   under which the  moment bounds \eqref{eq:uboundinexp}-\eqref{eq:uboundinexp3} do hold, so in this paper we rather focus on the study of conditions under which \eqref{eq:expdecayuptoorder4nonuniform} holds, see Section \ref{sec:pathwiseOAC} and Section \ref{sec:DonskerVaradhan}. We emphasize that such moment bounds are required to hold for the Euler approximation, not for the SDE itself. And on this matter we recall that the  even if the SDE \eqref{eq:SDE} has  moments of all orders, all of them bounded uniformly  in time,  this does not imply that the moments of the numerical approximation will enjoy the same property, see \cite[Lemma 6.3]{Mattingly}. In Example \ref{ex:additivenoiseeuler} we show how to use \cite[Corollary 7.5]{Mattingly} in our context.}
\item \textup{Hypothesis \ref{hyp:euler} \ref{ass:gradest} does not imply Hypothesis \ref{hyp:euler} \ref{hypmom} and moreover Hypothesis \ref{hyp:euler} \ref{ass:gradest} alone is not sufficient for \eqref{eq:uniformweakconv} to hold; indeed in Example \ref{ex:Grusin} we exhibit a simple two-dimensional SDE for which Hypothesis \ref{hyp:euler} \ref{ass:gradest} does hold but the (fourth)  moments of the corresponding Euler approximation do not satisfy Hypothesis \ref{hyp:euler} \ref{hypmom} and \eqref{eq:uniformweakconv} does not hold. We also note that the SDE of Example \ref{ex:Grusin} does satisfy the OAC (which, as we have recalled in the introduction, implies exponential decay of the derivatives, see Appendix \ref{app:UFG} for details); however, as Example \ref{ex:Grusin} shows, the OAC implies neither tightness of the process itself nor of its Euler approximation - in particular it does not imply the bounds \eqref{eq:uboundinexp}-\eqref{eq:uboundinexp3} of Hypothesis \ref{hyp:euler} \ref{hypmom}.}
\item \textup{Hypothesis \ref{hyp:euler} \ref{hypmom} does not imply Hypothesis \ref{ass:gradest} and Hypothesis \ref{hyp:euler} \ref{hypmom} alone is not sufficient to conclude \eqref{eq:uniformweakconv}, see Example \ref{ex:circlewithconfinement}.}
\item \textup{Note that whether Hypothesis \ref{hyp:euler} \ref{hypmom} holds or not may depend on the initial datum $x$ of the SDE \eqref{eq:SDE} and on the chosen step size, see Note \ref{note:Lyapunov} on this. Comments on the relation between Hypothesis \ref{hyp:euler} and Lyapunov-type conditions can also be found in Note \ref{note:Lyapunov}.}
\hf
\end{itemize}
\end{remark}

\begin{remark}\label{note: applicability} \textup{Here we point out some cases in which  Hypothesis \ref{hyp:euler} simplifies. }
\begin{itemize}
\item\textup{If the vector fields $U_0,V_1,\ldots, V_d$ are bounded and have bounded derivatives of all orders then $p=q=q'=0$ in Hypothesis \ref{hyp:euler} \ref{ass:lipschitz}, and   Hypothesis \ref{hyp:euler} \ref{hypmom} simplifies to requiring that \eqref{eq:expdecayuptoorder4nonuniform} holds for a function $u$  such that}
\begin{equation}\label{eq:uboundassumption}
\sup_{t\geq 0}\mathbb{E}\left[u(Y_t^\delta)\right]<\infty. \end{equation}
\textup{For concrete examples that fall within this case and for which the function $u(x)$ can be explicitly constructed see Corollary \ref{cor:additivenoisesummary} and Example \ref{ex:arctan}.}
    \item \textup{Assume the SDE \eqref{eq:SDEito} is elliptic and has a unique strong solution.   If  the vector fields $U_0, V_1, \dd V_d$ grow at most linearly and have bounded derivatives of all orders then Hypothesis \ref{hyp:euler} \ref{ass:lipschitz} holds with $p=1, q=0, q'=0$. In \cite{CrisanOttobre} and \cite{CCDO} it is shown that if the OAC is satisfied by appropriate vector fields (see \cite[Section 3.1]{CCDO} and Appendix \ref{app:UFG} for details of precise statement) 
then the bound \eqref{eq:expdecayuptoorder4nonuniform} holds  with $u(x) =constant$.  Therefore in this case  checking that  Hypothesis \ref{hyp:euler} \ref{hypmom} holds reduces to  verifying the following:
\be\label{eq:bdd4moment}
\sup_{t\geq 0}\mathbb{E}[\lvert Y_t^{\delta}\rvert^{4}]<\infty, 
\ee}
\textup{Example \ref{ex:additivenoiseeuler} gives a class of SDEs that fall within this case.}
\end{itemize}
\end{remark}

Before proving Theorem \ref{thm:globalerror} we state and prove the following standard auxiliary lemma. 
\begin{lemma}\label{lem:ItoforY}
    If $f\in C^2(\R\times\R^N)$ then, for any $t\geq 0$ we have
    \begin{align*}
    f(t,Y_t^\delta)& = f(0,Y_0^\delta) + \int_0^t \left[\partial_sf(s,Y_s^\delta)+ \cL_{(Y_{t_{n(s)}}^\delta)}f(s,Y_s^\delta)\right] ds \\
    &+ \sqrt{2}\sum_{k=1}^d\sum_{i=1}^N \int_0^t V_k^i(Y_{t_{n(s)}}^\delta)\partial_if(s,Y_{s}^\delta) dB_s^k.
    \end{align*}
\end{lemma}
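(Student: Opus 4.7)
The plan is to apply the standard It\^o formula for continuous semimartingales to the process $t \mapsto f(t, Y_t^\delta)$. The key observation is that, by definition \eqref{eq:continterp} (understood with the $\sqrt{2}$ factor consistent with \eqref{eq:eulerdiscrete} and with the statement of the lemma), $Y_t^\delta$ is a continuous It\^o process whose drift and dispersion coefficients are the piecewise-constant adapted processes $U_0(Y_{n_t}^\delta)$ and $\sqrt{2}\, V_k(Y_{n_t}^\delta)$. In particular, on each interval $[t_i, t_{i+1})$ these coefficients are frozen at the value of $Y$ at the left endpoint, while the path $s \mapsto Y_s^\delta$ itself still moves continuously.

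I would then write out It\^o's formula for $f\in C^2(\R\times\R^N)$ applied to this semimartingale:
\begin{align*}
df(s,Y_s^\delta) &= \partial_sf(s,Y_s^\delta)\,ds + \sum_{i=1}^N \partial_i f(s,Y_s^\delta)\,dY_s^{\delta,i} \\
&\quad + \tfrac{1}{2}\sum_{i,j=1}^N \partial_{i}\partial_{j}f(s,Y_s^\delta)\,d\langle Y^{\delta,i}, Y^{\delta,j}\rangle_s.
\end{align*}
The cross-variation is immediate from \eqref{eq:continterp}, namely $d\langle Y^{\delta,i}, Y^{\delta,j}\rangle_s = 2\sum_{k=1}^d V_k^i(Y_{n_s}^\delta)V_k^j(Y_{n_s}^\delta)\,ds$, and substituting $dY_s^\delta$ from the definition gives a Riemann part equal to
\[
\partial_s f(s,Y_s^\delta) + \sum_{i=1}^N U_0^i(Y_{n_s}^\delta)\,\partial_if(s,Y_s^\delta) + \sum_{k=1}^d\sum_{i,j=1}^N V_k^i(Y_{n_s}^\delta)V_k^j(Y_{n_s}^\delta)\,\partial_i\partial_jf(s,Y_s^\delta),
\]
together with the martingale part $\sqrt{2}\sum_{k,i}V_k^i(Y_{n_s}^\delta)\partial_i f(s,Y_s^\delta)\,dB_s^k$.

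The final step is to recognize the Riemann part above as $\partial_s f(s,Y_s^\delta) + (\cL_{(Y_{n_s}^\delta)} f)(s, Y_s^\delta)$, which is precisely the definition \eqref{starstar} of the frozen-coefficient operator: the coefficients are evaluated at $v = Y_{n_s}^\delta$ while the derivatives act at $y = Y_s^\delta$. Integrating from $0$ to $t$ then yields the stated identity.

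There is no real obstacle here; the only thing to be careful about is bookkeeping between two distinct spatial points, $Y_{n_s}^\delta$ (where the Euler coefficients are frozen) and $Y_s^\delta$ (where the derivatives of $f$ are computed), so that the identification with $\cL_{(\cdot)}$ is made correctly. The piecewise-constant nature of the coefficients of $Y^\delta$ poses no issue for It\^o's formula since $Y^\delta$ is a continuous semimartingale; the coefficients are only discontinuous in $s$ at the countable mesh $\{t_i\}$, which is Lebesgue-null.
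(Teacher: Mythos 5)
Your proof is correct and follows essentially the same approach as the paper: both apply It\^o's formula to $f(s,Y_s^\delta)$ and identify the resulting drift with $\partial_s f + \cL_{(Y_{n_s}^\delta)}f$ via \eqref{starstar}. The only cosmetic difference is that the paper applies It\^o's formula on each mesh interval $[t_n,t_{n+1})$ and then telescopes, whereas you apply it in one step over $[0,t]$ after observing that the piecewise-constant coefficients cause no trouble since $Y^\delta$ is a continuous semimartingale; these are the same argument, and your remark about the $\sqrt{2}$ in \eqref{eq:continterp} is the right reading.
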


\begin{proof}[Proof of Lemma \ref{lem:ItoforY}]
    Fix $f\in C^2(\R\times\R^N)$ and $t\geq0$ and let $n\in \N$ to be such that $t\in [t_n,t_{n+1})$. Then, using that $Y_t^{\delta}$ solves the SDE \eqref{eq:continterp}, by It\^o's formula we have
    \begin{align*}
    f(t,Y_t^\delta) &= f(t_n,Y_{t_n}^\delta) + \int_{t_n}^t(\partial_sf)(s,Y_s^\delta)+ \sum_{i=1}^N U_0^i(Y_{t_{n(s)}}^\delta)(\partial_if)(s,Y_s^\delta) ds\\
    &+ \int_{t_n}^t\sum_{i,j=1}^N V_k^i(Y_{t_{n(s)}}^\delta)V_k^j(Y_{t_{n(s)}}^\delta) (\partial_i\partial_jf)(s,Y_s^\delta) ds + \sqrt{2}\sum_{k=1}^d\sum_{i=1}^d \int_{t_n}^t V_k^i(Y_{t_{n(s)}}^\delta) \partial_if(s,Y_s^\delta) dB_t^k\\
    &=f(t_n,Y_{t_n}^\delta) + \int_{t_n}^t (\partial_sf)(s,Y_s^\delta)+ \cL_{(Y_{t_{n(s)}}^\delta)}f(s,Y_s^\delta) ds \\
    &+ \sqrt{2}\sum_{k=1}^d\sum_{i=1}^N \int_{t_n}^t V_k^i(Y_{t_{n(s)}}^\delta) \partial_if(s,Y_s^\delta) dB_s^k,
    \end{align*}    
    where in the second equality we have used \eqref{starstar}. Then by using  telescoping sums we have
    \begin{align*}
        f(t,Y_t^\delta) &= f(t,Y_t^\delta)-f(t_n,Y_{t_n}^\delta) + \sum_{m=0}^{n-1}[f(t_{m+1},Y_{t_{m+1}}^\delta) -f(t_m,Y_{t_m}^\delta)]+f(0,Y_0^\delta)\\
        &= f(0,Y_0^\delta) + \int_0^t (\partial_sf)(s,Y_s^\delta)+ \cL_{(Y_{t_{n(s)}}^\delta)}f(s,Y_s^\delta) ds \\
        &+ \sqrt{2}\sum_{k=1}^d\sum_{i=1}^N \int_0^t V_k^i(Y_{t_{n(s)}}^\delta)\partial_if(s,Y_{s}^\delta) dB_s^k
    \end{align*}
\end{proof}
 \begin{proof}[Proof of Theorem \ref{thm:globalerror}]
 Fix $\delta>0, t>0, \varphi\in C_b^\infty(\R^N)$. By applying It\^o's formula in the variable $s$ to $(\cP_{t-s}\varphi)(X_s^{(x)})$ (where the semigroup $\cP_t$ has been  introduced in \eqref{eq:semigpdef}), we have
\begin{align*}
(\cP_{t-s}\varphi)(X_s) & = (\cP_t\varphi)(x)+\int_0^s \pa_r (\cP_{t-r} \varphi) (X_r) dr 
\nonumber \\
& + \int_0^s \cL (\cP_{t-r} \varphi) (X_r) dr + 
\sqrt{2}\sum_{k=1}^d\sum_{i=1}^N\int_0^s V_k^i(X_r)\partial_i(\cP_{t-r} \varphi) (X_r) dB_r^k \,.
\end{align*}
Because $\pa_r (\cP_{t-r} \varphi) (X_r)= -\cL (\cP_{t-r} \varphi) (X_r)$, one gets
\be\label{itoforXsemigp} 
(\cP_{t-s}\varphi)(X_s)  = (\cP_t\varphi)(x)+ \sqrt{2}\sum_{k=1}^d\sum_{i=1}^N\int_0^s V_k^i(X_r)\partial_i(\cP_{t-r} \varphi) (X_r) dB_r^k \,.
\ee
On the other hand, by applying Lemma \ref{lem:ItoforY} with $f(s,y)=\cP_{t-s}\varphi(y)$, we get 
\begin{align}\label{itoforY}
(\cP_{t-s}\varphi)(Y_s^\delta) & = (\cP_{t}\varphi)(x)+ 
\sum_{k=1}^d\int_0^s V_k(Y_{t_{n(r)}}^\delta)\partial_i(\cP_{t-r}\varphi) (Y_{r}^\delta) dB_r^k  \nonumber\\
& + \int_0^s \cL_{(Y_{t_{n(r)}}^\delta)} (\cP_{t-r}\varphi) (Y_{r}^\delta) dr
- \int_0^s \cL_{(Y_r^\delta)} (\cP_{t-r}\varphi)(Y_r^\delta) dr.
\end{align}
Evaluating \eqref{itoforXsemigp} and \eqref{itoforY} at $s=t$, taking expectation and then the difference between the two equations, we obtain
\begin{align}\label{eq:diffinexp}
\E \varphi (X_t) - \E \varphi (Y_t^\delta) & =
\E \int_0^t \left(  \cL_{(Y_{r}^\delta)} - \cL_{(Y_{t_{n(r)}}^\delta)}\right) (P_{t-r}\varphi) (Y_r^\delta) dr .
\end{align}
We can now decompose \eqref{eq:diffinexp} as follows:
\begin{align}\label{eq:decompoferror}
\E \varphi (X_t) - \E \varphi (Y_t^\delta) & = I_1+I_2,
\end{align}
where 
\begin{align*}
    I_1&=\E \int_0^t \left(  \left(\cL_{(Y_{r}^\delta)}\cP_{t-r}\varphi\right)(Y_r^\delta)- \left(\cL_{(Y_{t_{n(r)}}^\delta)}\cP_{t-r}\varphi\right) (Y_{t_{n(r)}}^\delta) \right)dr,\\
    I_2&=\E \int_0^t \left(  \left(\cL_{(Y_{t_{n(r)}}^\delta)}\cP_{t-r}\varphi\right)(Y_{t_{n(r)}}^\delta)- \left(\cL_{(Y_{t_{n(r)}}^\delta)}\cP_{t-r}\varphi\right) (Y_{r}^\delta) \right)dr.
\end{align*}
To study the first addend, i.e. the term $I_1$, we 
fix $u\geq 0$ then we apply Lemma \ref{lem:ItoforY} to the time-independent function $f(y)=\cL_{(y)}(\cP_{u-r}\varphi)(y)$, obtaining
\begin{equation*}
I_1=\E \int_0^t \left(  \cL_{(Y_{r}^\delta)}(\cP_{u-r}\varphi)(Y_r^\delta)- \cL_{(Y_{t_{n(r)}}^\delta)}\cP_{u-r}\varphi (Y_{t_{n(r)}}^\delta) \right)dr  = \E \int_0^t\int_{t_{n(r)}}^r  \cL_{(Y_{t_{n(r)}}^\delta)}(\cL\cP_{u-r}\varphi)(Y_s^\delta)ds dr.
\end{equation*}
By setting $u=t$ we have
\begin{equation*}
    I_1 = \E \int_0^t\int_{t_{n(r)}}^r  \cL_{(Y_{t_{n(r)}}^\delta)}(\cL\cP_{t-r}\varphi)(Y_s^\delta)ds dr.
\end{equation*}
We can control the right hand side of the above using Hypothesis \ref{hyp:euler} \ref{ass:lipschitz}; indeed
\begin{align*}
\lvert I_1\rvert &= \left\lvert\E \int_0^t \left(  \cL_{(Y_{r}^\delta)}(\cP_{t-r}\varphi)(Y_r^\delta)- \cL_{(Y_{t_{n(r)}}^\delta)}\cP_{t-r}\varphi (Y_{t_{n(r)}}^\delta) \right)dr\right\rvert \\
& \leq  \E \int_0^t\int_{t_{n(r)}}^r  \lvert \cL_{(Y_{t_{n(r)}}^\delta)}(\cL\cP_{t-r}\varphi)(Y_s^\delta) \rvert ds dr\\
&\leq \E \int_0^t\int_{t_{n(r)}}^r   \sum_{i=1}^N\lvert U_0^i(Y_{t_{n(r)}}^\delta)(\partial_i\cL\cP_{t-r}\varphi)(Y_s^\delta)\rvert \\
&+ \sum_{k=1}^d\sum_{i,j=1}^N\lvert V_k^i(Y_{t_{n(r)}}^\delta)V_k^j(Y_{t_{n(r)}}^\delta)(\partial_{i}\partial_j\cL\cP_{t-r}\varphi)(Y_s^\delta) \rvert ds dr\\
&\leq K \E \int_0^t\int_{t_{n(r)}}^r   \sum_{i=1}^N (1+\lvert Y_{t_{n(r)}}^\delta\rvert^p) \lvert  (\partial_i\cL\cP_{t-r}\varphi)(Y_s^\delta)\rvert + \sum_{k=1}^d\sum_{i,j=1}^N(1+\lvert Y_{t_{n(r)}}^\delta\rvert^{2p})\lvert (\partial_{i}\partial_j\cL\cP_{t-r}\varphi)(Y_s^\delta) \rvert ds dr \, ,
\end{align*}
 where $K$ is a positive constant which depends on $K_1$ and $K_2$.  Let us start by analysing the first addend on the right hand side of the above: 

\begin{align*}
&\E \int_0^t\int_{t_{n(r)}}^r   \sum_{i=1}^N (1+\lvert Y_{t_{n(r)}}^\delta\rvert^p) \lvert  (\partial_i\cL\cP_{t-r}\varphi)(Y_s^\delta)\rvert ds dr \\
&\leq \E \int_0^t\int_{t_{n(r)}}^r   \sum_{i=1}^N (1+\lvert Y_{t_{n(r)}}^\delta\rvert^p)\left\lvert\partial_i\left(\sum_{\ell=1}^N U_0^\ell(\cdot)\partial_\ell\cP_{t-r}\varphi+\sum_{k=1}^d\sum_{\ell,m=1}^N V_k^\ell(\cdot)V_k^m(\cdot)\partial_\ell\partial_m\cP_{t-r}\varphi\right)(Y_s^\delta)\right\rvert ds  dr\\
&\leq \E \int_0^t\!\!\int_{t_{n(r)}}^r (1+\lvert Y_{t_{n(r)}}^\delta\rvert^p)  \sum_{i=1}^N\sum_{\ell=1}^N \Bigg( \lvert\partial_iU_0^\ell(Y_s^\delta)\partial_\ell\cP_{t-r}\varphi(Y_s^\delta)\rvert +\lvert U_0^\ell(Y_s^\delta)\partial_{i,\ell}\cP_{t-r}\varphi(Y_s^\delta)\rvert\\
&+ \!2\sum_{k=1}^d\!\sum_{m=1}^N \lvert\partial_iV_k^\ell(Y_s^\delta)V_k^m(Y_s^\delta)\partial_{\ell,m}\cP_{t-r}\varphi(Y_s^\delta)\rvert \!+\!\sum_{k=1}^d\!\sum_{m=1}^N \lvert V_k^\ell(Y_s^\delta)V_k^m(Y_s^\delta)\partial_{i,\ell,m}\cP_{t-r}\varphi(Y_s^\delta)\rvert \!\Bigg)dsdr.
\end{align*}
Now we use Hypothesis \ref{hyp:euler} \ref{ass:gradest} to estimate each of these terms.
\begin{align*}
&\E \int_0^t\int_{t_{n(r)}}^r  \sum_{i=1}^N (1+\lvert Y_{t_{n(r)}}^\delta\rvert^p) \lvert  (\partial_i\cL\cP_{t-r}\varphi)(Y_s^\delta)\rvert ds dr \\
&\leq K \E \int_0^t\int_{t_{n(r)}}^r  (1+\lvert Y_{t_{n(r)}}^\delta\rvert^p)\sum_{i,m,\ell=1}^N\bigg(  (1+\lvert Y_s^\delta\rvert ^q)\lvert\partial_\ell\cP_{t-r}\varphi(Y_s^\delta)\rvert +(1+\lvert Y_s^\delta\rvert^{p})(1+\lvert Y_s^\delta\rvert^q) \lvert \partial_{i,\ell}\cP_{t-r}\varphi(Y_s^\delta)\rvert\\
&+(1+\lvert Y_s^\delta\rvert^{2p})\lvert\partial_{i,\ell,m}\cP_{t-r}\varphi(Y_s^\delta)\rvert\bigg) dsdr\\
&\leq K \lVert \varphi \rVert_{C_b^4}\!\! \int_0^t\!\!\!\int_{t_{n(r)}}^r\!\!\! \mathbb{E}\left[(1+ \lvert Y_{t_{n(r)}}^\delta\rvert^p)(1+\lvert Y_s^\delta\rvert ^q +(1+\lvert Y_s^\delta\rvert^{p})(1+\lvert Y_s^\delta\rvert^q)+\lvert Y_s^\delta\rvert^{2p})u(Y_s^\delta)\right]e^{-\lambda_0 (t-r)} dsdr\\
&\leq K \lVert \varphi \rVert_{C^4_b} \delta.
\end{align*}
Here we have used estimate \eqref{eq:uboundinexp} and the fact that $r-t_{n(r)}\leq \delta$ to obtain the final inequality. In the above $K$ is a generic positive constant, the value of which changes line by line and only depends on $K_1,\ldots, K_6,\lambda, d, N$ but does not depend on $t$. Similarly by using estimate \eqref{eq:uboundinexp2} we obtain
\begin{align*}
\E \int_0^t\int_{t_{n(r)}}^r   \sum_{i,j=1}^N(1+\lvert Y_{t_{n(r)}}^\delta\rvert^{2p})\lvert (\partial_{i,j}\cL\cP_{t-r}\varphi)(Y_s^\delta)\rvert ds dr \leq K \lVert \varphi \rVert_{C^4_b} \delta\, .
\end{align*}
Putting everything together, one obtains
\begin{equation}
    \lvert I_1 \rvert \leq K \delta \lVert \varphi \rVert_{C^4_b} \delta. \label{eq:boundforI1}
\end{equation}
Now consider the term $I_2$; similarly to what we have done before, we use Lemma \ref{lem:ItoforY} applied to the function $f(y)=\cL_{(Y_{t_{n(r)}}^\delta)}\cP_{u-r}\varphi(y)$  and calculate the resulting expression when $u=t$. We then have
\begin{align*}
\lvert I_2\rvert&=\left\lvert\E \int_0^t \left(  \cL_{(Y_{t_{n(r)}}^\delta)}(\cP_{t-r}\varphi)(Y_{t_{n(r)}}^\delta)- \cL_{(Y_{t_{n(r)}}^\delta)}\cP_{t-r}\varphi (Y_{r}^\delta) \right)dr \right\rvert\\
&= \left\lvert\E \int_0^t\int_{t_{n(r)}}^r \cL_{(Y_{t_{n(r)}}^\delta)}\cL_{(Y_{t_{n(r)}}^\delta)}(\cP_{t-r}\varphi)(Y_{s}^\delta) ds dr \right\rvert\\
&=\bigg\lvert\E \!\! \int_0^t\!\!\!\int_{t_{n(r)}}^r \!\!\sum_{i,\ell=1}^NU_0^i(Y_{t_{n(r)}}^\delta)U_0^\ell(Y_{t_{n(r)}}^\delta)\partial_{i,\ell}(\cP_{t-r}\varphi)(Y_{s}^\delta)+ 2\!\!\sum_{i,j,\ell=1}^N \!\! U_0^\ell(Y_{t_{n(r)}}^\delta)V_k^i(Y_{t_{n(r)}}^\delta)V_k^j(Y_{t_{n(r)}}^\delta)\partial_{i,j,\ell}\cP_{t-r}\varphi(Y_s^\delta) \\
&+ \sum_{i,j,\ell,m=1}^N\sum_{k,q=1}^d V_k^i(Y_{t_{n(r)}}^\delta)V_k^j(Y_{t_{n(r)}}^\delta)V_q^\ell(Y_{t_{n(r)}}^\delta)V_q^m(Y_{t_{n(r)}}^\delta)\partial_{i,j,\ell,m}\cP_{t-r}\varphi(Y_s^\delta)ds dr \bigg\rvert\\
&\leq K\E \int_0^t\int_{t_{n(r)}}^r \sum_{i,\ell=1}^N(1+\lvert Y_{t_{n(r)}}^\delta\rvert^{2p})\lvert\partial_{i,\ell}(\cP_{t-r}\varphi)(Y_{s}^\delta)\rvert + \sum_{i,j,\ell=1}^N(1+\lvert Y_{t_{n(r)}}^\delta\rvert^{3p})\lvert\partial_{i,j,\ell}\cP_{t-r}\varphi(Y_s^\delta)\rvert\\
&+ \sum_{i,j,\ell,m=1}^N\sum_{k,q=1}^d (1+\lvert Y_{t_{n(r)}}^\delta\rvert^{4p})\lvert \partial_{i,j,\ell,m}\cP_{t-r}\varphi(Y_s^\delta)\rvert ds dr \,.
\end{align*}
Now we use estimate \eqref{eq:expdecayuptoorder4nonuniform} to obtain
\begin{equation}
\lvert I_2\rvert 
\leq K \lVert \varphi\rVert_{C^4_b} \int_0^t \int_{t_{n(r)}}^r\mathbb{E}\left[(1+ \lvert Y_{t_{n(r)}}^\delta\rvert^{4p})u(Y_s^\delta)\right]ds\, e^{-\lambda_0 (t-r)} dr\leq K \delta \lVert \varphi\rVert_{C^4_b} \,. \label{eq:boundforI2}
\end{equation}
To get the final inequality we have used \eqref{eq:uboundinexp3}. 
The proof is concluded by combining \eqref{eq:boundforI1}, \eqref{eq:boundforI2} and \eqref{eq:decompoferror}.
\end{proof}

\begin{corollary}\label{cor:ergodicaverages}
Suppose the coefficients of the SDE \eqref{eq:SDEito}  satisfy Hypothesis \ref{hyp:euler}. If the solution $X_t$ of the SDE \eqref{eq:SDEito} is ergodic with  invariant measure $\mu$ i.e. 
\begin{equation}
\left\vert \frac{1}{t} \int_0^t  \mathbb E \left[\varphi(X_s)\right] ds - \int_{\R^N} \varphi(x)d\mu(x) \right \vert  \longrightarrow 0 \quad \mbox{as } t \rightarrow \infty,
\end{equation}
 then 
\begin{equation}
\left\vert \frac{1}{t} \int_0^t  \mathbb E \left[\varphi(Y_s^{\delta})\right] ds - \int_{\R^N} \varphi(x)d\mu(x) \right \vert  \longrightarrow 0 \quad \mbox{as } t \rightarrow \infty, \delta \rightarrow 0,
\end{equation}
for every function $\varphi \in C_b^{\infty}$, where $Y_t^{\delta}$ has been defined in \eqref{eq:continterp}. 
\end{corollary}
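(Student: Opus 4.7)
My plan is to obtain the result directly from Theorem \ref{thm:globalerror} via the triangle inequality, with no additional hypothesis than those already in force. The key observation is that the uniform (in time) weak error bound \eqref{eq:uniformweakconv} transfers verbatim to time-averaged expectations: for every $t>0$ and $\varphi\in C_b^\infty(\R^N)$,
\begin{equation*}
\left\vert \frac{1}{t}\int_0^t \mathbb{E}[\varphi(Y_s^\delta)]\,ds - \frac{1}{t}\int_0^t \mathbb{E}[\varphi(X_s)]\,ds \right\vert
\leq \frac{1}{t}\int_0^t \left\vert \mathbb{E}[\varphi(Y_s^\delta)] - \mathbb{E}[\varphi(X_s)]\right\vert ds
\leq K\delta \lVert \varphi\rVert_{C^4_b},
\end{equation*}
by the $\sup_{s\geq 0}$ bound supplied by Theorem \ref{thm:globalerror}.

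Then I would split the quantity of interest into two terms by inserting $\frac{1}{t}\int_0^t \mathbb{E}[\varphi(X_s)]\,ds$:
\begin{align*}
\left\vert \frac{1}{t}\int_0^t \mathbb{E}[\varphi(Y_s^\delta)]\,ds - \int_{\R^N}\varphi\,d\mu\right\vert
&\leq \left\vert \frac{1}{t}\int_0^t \mathbb{E}[\varphi(Y_s^\delta)]\,ds - \frac{1}{t}\int_0^t\mathbb{E}[\varphi(X_s)]\,ds\right\vert \\
&\quad + \left\vert \frac{1}{t}\int_0^t \mathbb{E}[\varphi(X_s)]\,ds - \int_{\R^N}\varphi\,d\mu\right\vert.
\end{align*}
By the estimate above, the first term is bounded by $K\delta\lVert\varphi\rVert_{C^4_b}$ uniformly in $t$, while the second term tends to $0$ as $t\to\infty$ by the ergodicity assumption on $X_t$. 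Letting first $t\to\infty$ and then $\delta\to 0$ yields the claim (or, equivalently, the $\limsup$ in $t$ is bounded by $K\delta\lVert\varphi\rVert_{C^4_b}$, which can be made arbitrarily small by choosing $\delta$ sufficiently small).

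There is no serious obstacle here: once Theorem \ref{thm:globalerror} is in hand, the proof is a two-line triangle-inequality argument. The only thing worth flagging in the write-up is that the statement of the corollary should be read as a joint limit in $(t,\delta)$: for fixed $\delta>0$ one does not expect the ergodic average of $Y^\delta$ to converge exactly to $\int\varphi\,d\mu$ (it will generically converge to the mean under the invariant measure of the Euler chain, which differs from $\mu$ by $O(\delta)$), but the uniform-in-time bound ensures this discrepancy is controlled by $K\delta\lVert\varphi\rVert_{C^4_b}$, so the result holds in the stated sense once $\delta$ is taken to $0$.
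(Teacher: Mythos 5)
Your argument is essentially identical to the paper's: insert $\frac{1}{t}\int_0^t \mathbb{E}[\varphi(X_s)]\,ds$, apply the triangle inequality, control the first term uniformly in $t$ via Theorem~\ref{thm:globalerror} and send the second to zero by ergodicity. The only difference is that you handle the conclusion more carefully than the paper does: the paper asserts that the first addend ``tends to zero thanks to \eqref{eq:uniformweakconv},'' whereas in fact for fixed $\delta>0$ it is merely bounded by $K\delta\lVert\varphi\rVert_{C_b^4}$ and does not vanish as $t\to\infty$; your reading of the statement as $\limsup_{t\to\infty}|\cdot|\le K\delta\lVert\varphi\rVert_{C^4_b}$ (and hence $\to 0$ only after $\delta\to 0$) is the correct one and repairs a small imprecision in the paper's wording.
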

\begin{proof}[Proof of Corollary \ref{cor:ergodicaverages}.] Note that if $X_t$ admits an invariant measure, then such an invariant measure is unique by ellipticity so the initial datum $x$ of the SDE doesn't play a role in what follows. Using Theorem \ref{thm:globalerror}, we have 
\begin{align*}
& \left\vert \frac{1}{t} \int_0^t  \mathbb E \left[\varphi(Y_s^{\delta})\right] ds - \int_{\R^N} \varphi(x)d\mu(x) \right \vert \\ 
& \leq  \frac{1}{t} \int_0^t  \left\vert  \mathbb{E} \varphi(Y_s^{\delta}) - \mathbb{E} \varphi(X_s)  \right\vert ds + \left\vert \frac{1}{t} \int_0^t \mathbb E \varphi(X_s)  ds  - \int_{\R^N} \varphi(x)d\mu(x)   \right \vert. 
\end{align*}
The first addend on the RHS tends to zero thanks to \eqref{eq:uniformweakconv}, the second one by assumption. 
    \end{proof}

\begin{example}\label{ex:additivenoiseeuler}\textup{Consider the one-dimensional SDE
    \begin{equation}\label{eq:SDEadditivenoise1dim}
      dX_t=b(X_t)dt+\sqrt{2} dB_t.    
    \end{equation}
     With the notation set so far, for this example we have   $V_0=U_0=b(x)\partial_x$ and $ V_1=\partial_x$. 
    Here $b:\R\to\R$ is a smooth function with bounded derivatives of all orders (but $b(x)$ itself is not assumed to be bounded). Suppose also that 
    \be\label{eq:bprimed}
    b'\leq -\lambda_0, \mbox{for some } \lambda_0>0. 
    \ee
    The process obviously satisfies Hypothesis \ref{hyp:euler} \ref{ass:ellipiticity}. 
    By Lemma \ref{lem:higherorderOACforadditivenoise} Hypothesis \ref{hyp:euler} \ref{ass:gradest} holds with $u(x)=constant$. Now, as in Note \ref{note: applicability}, if  \eqref{eq:bdd4moment} holds then Hypothesis \ref{hyp:euler} \ref{hypmom} is satisfied as well.
    To verify that \eqref{eq:bdd4moment} is satisfied, we  can integrate \eqref{eq:bprimed} and find that
    \be\label{eq:ybyineq}
    yb(y)\leq yb(0)-\lambda_0 y^2.
    \ee
     It is shown in \cite[Corollary 7.5]{Mattingly} that if $b$ is smooth, globally Lipschitz and satisfies \eqref{eq:ybyineq} then for sufficiently small $\delta>0$ there is a unique invariant measure $\pi^{\delta}$ for the numerical approximation $Y_{t_n}^\delta$. Moreover for each $\ell\geq 1$ there exist constants $C=C(\ell,\delta), \overline{\lambda}=\overline{\lambda}(\ell,\delta)>0$ such that for all functions $g$ such that $g(z) \leq C(1+\lvert z\rvert^{2\ell})$ we have  
    \begin{equation*}
    \left\vert\mathbb{E}[g(Y_{t_n}^\delta)]-\int_\R g(z) \pi^{\delta}(dz) \right\vert \leq C (1+\lvert x \rvert^{2\ell})e^{-\overline{\lambda} n \delta}, 
    \end{equation*}
    where $x$ is the initial datum of the SDE (and of the Euler approximation). 
    In particular, by taking $g(x)=x^4$ (i.e. $\ell=2$) we see that \eqref{eq:bdd4moment} is satisfied and we may apply Theorem \ref{thm:globalerror} to find that the weak error of the Euler scheme  converges to zero  uniformly in time.}\hf
\end{example}

\begin{corollary}\label{cor:additivenoisesummary}
Consider the SDE \eqref{eq:SDEadditivenoise1dim} and assume that $b$ is a smooth function with bounded derivatives of all orders. Moreover assume either one of the following:
\begin{enumerate}[(i)]
    \item There exists some constant $\lambda_0>0$ with $b'(y)\leq -\lambda_0$ for all $y\in \R$;\label{item:OACassumption}
    \item \label{item:weakestassumption}The function $b$ is bounded,  $-2b'(y)-\Xi(y)\geq 2 \lambda_0$ for all $y\in \R$,   $\mathrm{sign}(x)b(x)<0$ for $x$ sufficiently large and $\mathbb{E}[u(Y_{t_n}^\delta)]$ is bounded independently of $n$,  where \begin{align*}
        u(y)&=\cosh(\alpha y),\\
        \Xi(y) &= \alpha^2+\alpha b(y)\tanh(\alpha y),
    \end{align*}
    for some $\alpha>0$.
\end{enumerate}
Then Hypothesis \ref{hyp:euler} holds and by Theorem \ref{thm:globalerror} the weak error of the Euler approximation of \eqref{eq:SDEadditivenoise1dim} converges to zero uniformly in time.
\end{corollary}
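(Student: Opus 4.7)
\medskip

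\textbf{Proof proposal.} The plan is to verify the four components of Hypothesis \ref{hyp:euler} in each of the two cases. Parts \ref{ass:ellipiticity} and \ref{ass:lipschitz} are essentially immediate in both cases: the diffusion coefficient is $V_1=\partial_x$, so uniform ellipticity is automatic, and the polynomial growth constants are $p=q=q'=0$ in case \ref{item:weakestassumption} (since $b$ and all its derivatives are bounded), while in case \ref{item:OACassumption} integrating $b'\le -\lambda_0$ yields at most linear growth of $b$ itself, giving $p=1$, $q=q'=0$. The whole content of the corollary is therefore in \ref{ass:gradest} and \ref{hypmom}.

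For case \ref{item:OACassumption}, I would proceed exactly as in Example \ref{ex:additivenoiseeuler}. Computing $[V_1,V_0]=b'(x)\partial_x$, the global Obtuse Angle Condition \eqref{OAC1d} holds with constant $\lambda_0$, so the results of \cite{CrisanOttobre} (recalled in Appendix \ref{app:UFG}) give \eqref{eq:expdecayuptoorder4nonuniform} with $u\equiv\mathrm{const}$. With $u$ constant and $p=1$, \ref{hypmom} reduces to uniform-in-time boundedness of the fourth moment of $Y_t^\delta$; as discussed in Example \ref{ex:additivenoiseeuler}, the one-sided Lipschitz bound \eqref{eq:ybyineq} obtained by integrating $b'\le-\lambda_0$ allows us to apply \cite[Corollary 7.5]{Mattingly} to conclude.

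The bulk of the work is case \ref{item:weakestassumption}. In one dimension the Local Obtuse Angle Condition \eqref{LOAC1d} with $V=V_1=\partial_x$ and $V_0=b(x)\partial_x$ reads $b'(x)\le -\lambda(x)$, so it is satisfied with $\lambda(x):=-b'(x)$. Applying the pathwise estimate Theorem \ref{prop:OAC} then yields
\[
\lvert \partial_x\cP_t f(x)\rvert \le c\,\mathbb{E}^x\!\left[\exp\!\left(2\int_0^t b'(X_s)\,ds\right)\right]^{1/2}\!\lVert f\rVert_{C_b^1},
\]
and analogous bounds for the higher-order derivatives appearing in \eqref{eq:expdecayuptoorder4nonuniform} (differentiating the SDE further and reapplying the method of Section \ref{sec:pathwiseOAC}). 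To upgrade this pathwise bound to \eqref{expdecayintro} with the prescribed $u(y)=\cosh(\alpha y)$, I would use the Feynman--Kac / occupation-measure machinery of Section \ref{sec:DonskerVaradhan}: one checks that $w(t,x):=u(x)^2 e^{-2\lambda_0 t}$ is a supersolution of $\partial_t w=\mathcal{L}w+2b'(x)w$, i.e.\ that $\mathcal{L}(u^2)/u^2+2b'\le -2\lambda_0$. A short calculation shows $\mathcal{L}(u^2)/u^2=2\Xi(x)+2\alpha^2\tanh^2(\alpha x)-2\alpha^2$, which is dominated by $2\Xi(x)$, so the supersolution inequality follows from the hypothesis $-2b'(y)-\Xi(y)\ge 2\lambda_0$. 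The recurrence hypothesis $\mathrm{sign}(x)b(x)<0$ for large $\lvert x\rvert$ supplies the tightness needed to make the Donsker--Varadhan bounds of Section \ref{sec:DonskerVaradhan} effective and to control the passage from the $L^2$ exponential bound to the desired $u(x) e^{-\lambda_0 t}$.

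Finally, for \ref{hypmom} in case \ref{item:weakestassumption}, since $p=q=q'=0$ it suffices to show $\sup_{s\ge 0}\mathbb{E}[u(Y_s^\delta)]<\infty$. The assumption gives this at grid points $t_n$; between grid points, boundedness of $U_0$ and $V_k$ together with the Gaussian moments of the Brownian increment imply
\[
\mathbb{E}[\cosh(\alpha Y_s^\delta)\mid\mathcal{F}_{n_s}]\le e^{C\alpha^2\delta+C\alpha\delta}\cosh(\alpha Y_{n_s}^\delta),
\]
so the uniform bound at grid points propagates to all $s$. The main obstacle I expect is the computation in the previous paragraph: correctly identifying the Feynman--Kac supersolution with the squaring coming from \eqref{gradest} and verifying that the specific form of $\Xi$ plus the sign condition on $b$ really suffices to invoke the results of Section \ref{sec:DonskerVaradhan} in the non-uniformly coercive regime.
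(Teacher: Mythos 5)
The treatment of case \ref{item:OACassumption}, the easy parts \ref{ass:ellipiticity}--\ref{ass:lipschitz}, and the observation about propagating the moment bound from grid points to all of $[t_n,t_{n+1})$ are all fine (the last is a genuinely useful addition, since the paper's proof is silent on it). The problem is in the core of case \ref{item:weakestassumption}: your Feynman--Kac supersolution $w(t,x)=u(x)^2e^{-2\lambda_0 t}$ does not work, and the error is in the displayed computation.

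Since $\mathcal L=b(x)\partial_x+\partial_x^2$, one has the identity $\mathcal L(u^2)=2u\mathcal Lu+2(u')^2$, so
\begin{equation*}
\frac{\mathcal L(u^2)}{u^2}=2\,\frac{\mathcal Lu}{u}+2\Bigl(\frac{u'}{u}\Bigr)^2=2\Xi(x)+2\alpha^2\tanh^2(\alpha x)\;\geq\;2\Xi(x),
\end{equation*}
\emph{not} $2\Xi+2\alpha^2\tanh^2-2\alpha^2$. The extra term $2(u'/u)^2\geq 0$ works against you, so $\mathcal L(u^2)/u^2$ is \emph{not} dominated by $2\Xi$. Moreover, even granting your formula, the supersolution inequality you need is $2\Xi+2b'\leq -2\lambda_0$, i.e.\ $-2b'-2\Xi\geq 2\lambda_0$, which has a factor of $2$ on $\Xi$ that the hypothesis $-2b'-\Xi\geq 2\lambda_0$ does not supply; at $x=0$, where $\Xi(0)=\alpha^2>0$, the two conditions are genuinely different. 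So the verification fails twice over.

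The paper circumvents this by estimating $\mathbb{E}\bigl[\exp\bigl(2\int_0^t b'(X_s)\,ds\bigr)\bigr]\leq u(x)e^{-2\lambda_0 t}$ with $u$ to the \emph{first} power, via Theorem \ref{thm:Lyapunovbound} (whose proof shows $u_n(x)=\mathbb{E}\bigl[u_n(X_t)\exp\bigl(-\int_0^t\mathcal Lu_n/u_n\bigr)\bigr]$ and then drops $u_n(X_t)\geq 1$). The stationary Feynman--Kac condition is then $\mathcal Lu/u+2b'=\Xi+2b'\leq -2\lambda_0$, which is \emph{exactly} the hypothesis; this is checked, together with the cut-off sequence $u_n$, in Lemma \ref{lem:hypcheckforadditivenoiseex}, where the condition $\mathrm{sign}(x)b(x)<0$ for large $|x|$ is used to get the uniform bound on $\mathcal Lu_n/u_n$ in the transition region $n\leq |x|\leq 2n$ (not, as you suggest, for tightness or for an $L^2$-to-pointwise passage). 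After plugging this into \eqref{eq:gradest} the derivative bound carries $u(x)^{1/2}=\cosh^{1/2}(\alpha x)\leq\cosh(\alpha x)$, so the stated $u(y)=\cosh(\alpha y)$ in Hypothesis \ref{hyp:euler} \ref{ass:gradest} is recovered with room to spare. Your strategy can be repaired by taking the supersolution to be $u(x)e^{-2\lambda_0 t}$ rather than $u(x)^2e^{-2\lambda_0 t}$, using $u\geq 1$ for the initial-time comparison and absorbing the square root at the end; and the extension to the higher-order derivatives in \eqref{eq:expdecayuptoorder4nonuniform} should cite Lemma \ref{lem:higherorderOACforadditivenoise}, whose input is precisely the one-derivative bound \eqref{eq:expdecayintermsofb} just established.
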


\begin{proof}[Proof of Corollary \ref{cor:additivenoisesummary}]
 If we assume \ref{item:OACassumption} holds then the result follows from Example \ref{ex:additivenoiseeuler}. On the other hand, if we assume \ref{item:weakestassumption} holds then by Theorem \ref{thm:globalerror} it is sufficient to check Hypothesis \ref{hyp:euler} \ref{ass:gradest} and \ref{hypmom} hold. Hypothesis \ref{hyp:euler} \ref{ass:gradest} will follow from Theorem \ref{prop:OAC}, Theorem \ref{thm:Lyapunovbound} and Lemma \ref{lem:hypcheckforadditivenoiseex}. In particular, Theorem \ref{prop:OAC} can be applied after observing that the LOAC \eqref{LOAC1d} with $V_0=b(x) \pa_x$ and $V_1=V=\pa_x$ holds for this example once we take $\lambda(x)=-b'(x)$. Theorem \ref{thm:Lyapunovbound} can be  instead applied thanks to Lemma \ref{lem:hypcheckforadditivenoiseex}.  By  Note \ref{note: applicability}, Hypothesis \ref{hyp:euler} \ref{hypmom} reduces to \eqref{eq:uboundassumption}, which holds by assumption \ref{item:weakestassumption}.
\end{proof}

\section{A pathwise  approach to derivative estimates for Markov semigroups }\label{sec:pathwiseOAC}

In this section and the next we study derivative estimates for Markov semigroups, i.e. we study sufficient conditions in order for bounds of the type \eqref{expdecayintro}   to hold. To be more precise, in this section we find conditions in order for 
\eqref{gradest}  to hold, in Section \ref{sec:DonskerVaradhan} we will give criteria to obtain \eqref{expdecayintro} from \eqref{gradest}.  
We will consider SDEs of the form \eqref{eq:SDE} and, in order to explain ideas in a simple setting, we first consider the one-dimensional case  $N=1$ (Theorem \ref{prop:OAC}) and then generalise results to the case $N>1$ (Theorem \ref{prop:LOAChd}). If $N=1$ then,  Lemma \ref{lem:simplificationin1d} shows that  without loss of generality we may assume that $d=1$ as well and consider  one-dimensional SDEs of the form
\begin{equation}\label{eq:SDEmain1d}
	    dX_t^{(x)}=V_0(X_t^{(x)})+\sqrt{2}V_1(X_t^{(x)})\circ dB_t, \quad X_0^{(x)}=x \in \R.
	\end{equation}
	
\begin{remark}\label{Note:ellipticity}
\textup{Let us make some remarks on the relation between \eqref{expdecayintro} and \eqref{eq:expdecayuptoorder4nonuniform} and on the setting of this section and the next.  }
\begin{itemize}
\item \textup{In  Hypothesis \ref{hyp:euler} we require derivatives in the coordinate directions to decay exponentially fast, see \eqref{eq:expdecayuptoorder4nonuniform}. This is due to the fact that in Section \ref{sec:Eulerestimate} we were working in the setting in which the SDE at hand is elliptic. From this section on all the results we obtain are completely general in this respect and do not require any ellipticity to hold. We therefore study derivatives along more general vector fields. If in \eqref{expdecayintro} one takes $V(x)= \pa_x$ then \eqref{expdecayintro} becomes just \eqref{eq:expdecayuptoorder4nonuniform} (almost, see next bullet point).}
\item \textup{We shall concentrate on estimates for first order derivatives however similar arguments could be applied to higher order derivatives as shall be demonstrated in Lemma \ref{lem:higherorderOACforadditivenoise} for a class of examples.}
\item \textup{As we have already said, in this section we first consider the case $N=1$ and then move on to the general case $N>1$. When $N=1$, (under our assumptions) it suffices to study derivative estimates in the direction $V_1$. Let us explain why this is the case.  Suppose first that \eqref{eq:SDEmain1d} is uniformly elliptic.  We recall (see  Lemma \ref{lem:lamperti}), that  any one-dimensional uniformly elliptic SDE of the form \eqref{eq:SDEmain1d} can be transformed into a (one-dimensional) SDE with additive noise (i.e into an   SDE of the form  \eqref{eq:transformedSDE}). After such a transformation the differential operator $V_1$ is therefore just the derivative in the coordinate direction, $V_1=\pa_x$. Hence, in the  elliptic case, one can always recover derivative estimates in the coordinate direction $\pa_x$ from derivative estimates in the direction $V_1$. If the one-dimensional SDE \eqref{eq:SDEmain1d} is not uniformly elliptic it is not in general true that it suffices to study derivatives in the direction $V_1$. However, if $N=1$ (hence one can take $d=1$ as well) and we impose the LOAC \eqref{eq:LOACforV1}, it is indeed the case that it suffices to study the derivatives of the semigroup $\cP_t$ generated by \eqref{eq:SDEmain1d} just in the direction $V_1$; we prove this fact in Lemma  \ref{lem:onlyneedlevel1}.
}
 \end{itemize}
\hf
\end{remark}

While we do not  assume any particular growth condition on  the vector fields $V_0,V_1$, the case we really have in mind in developing this section and the next is the one in which the coefficients of the SDE are bounded. To explain why, in a way, this case is harder than the one in which one has some growth of the coefficients, let us start by recalling that in \cite{CrisanOttobre} the authors proved that, under the OAC \eqref{OAC1d},  the estimate \eqref{expdecayintro} follows with $u(x)=constant$ (precise statement in Appendix \ref{app:UFG}); however, as we show in Lemma \ref{lem:OACforaddnoise},  roughly speaking, the OAC implies unboundedness of the coefficients of the SDE.  
On the other hand, one does expect that exponential decay of derivatives of the semigroup may hold even if the coefficients of the SDE are bounded.  To illustrate why this is the case on a (relatively) simple example, start by considering the one-dimensional ODE 
\begin{equation}\label{eq:arctanODE}
    \frac{d}{dt}\xi_t^{(x)} =-\arctan(\xi_t^{(x)}), \quad \xi_0=x \,.
\end{equation}
This ODE has a single equilibrium at  $\xi=0$ and such an equilibrium is stable. Moreover,  for any $x\in\R$, we have \footnote{Differentiating \eqref{eq:arctanODE} with respect to $x$ gives
$$
\frac{d}{dt}\partial_x\xi_t^{(x)} = -\frac{1}{1+(\xi_t^{(x)})^2}\partial_x\xi_t^{(x)}.
$$
We can solve this to find
$$
\partial_x\xi_t^{(x)} = \exp\left(-\int_0^t \frac{1}{1+(\xi_s^{(x)})^2} ds\right).
$$
Finally, since $\xi_s^{(x)}$ converges monotonically towards zero we have $(\xi_s^{(x)})^2\leq x^2$ and hence \eqref{eq:arctanODEjacestimate} follows.
}  
\begin{equation}\label{eq:arctanODEjacestimate}
    \partial_x(\xi_t^{(x)}) \leq \exp\left(-\frac{t}{1+x^2}\right). 
\end{equation}
Motivated by this analogy we shall consider the SDE
\begin{equation}\label{eq:arctanintro}
    dX_t^{(x)} = -\arctan(X_t^{(x)})dt+\sqrt{2}dB_t.
\end{equation}
In Example \ref{ex:arctanprelim} and Example \ref{ex:arctan} we will show that \eqref{expdecayintro} does hold for the above SDE (and moreover that the Euler method does weakly approximate \eqref{eq:arctanintro} uniformly in time). Although this example {\em does not} satisfy the OAC \eqref{OAC1d},  one can easily verify  that for each $R>0$ and $f$ sufficiently smooth we have
\begin{equation*}\label{eq:OACarctan}
([V_{1},V_0]f)(x)(V_{1}f)(x) \leq -\frac{1}{1+R^2}\lvert (V_{1}f)(x)\rvert^2, \quad \mbox{for every } x\in [-R,R].
\end{equation*}
That is, the OAC is locally satisfied for $x\in [-R,R]$; this motivates us to introduce local versions \eqref{LOAC1d} of the OAC. 
\begin{remark}\label{Note:expmomentsarctangent}\textup{
We note in passing that the solution of  \eqref{eq:arctanintro} has uniformly in time bounded exponential moments, i.e.
\begin{equation*}
    \sup_{t\geq 0} \mathbb{E}[e^{\lvert X_t^{x}\rvert} ]<\infty, \quad \forall x\in\R, 
\end{equation*}
see Lemma \ref{lem:expmomentsforarctan}. 
So, overall,  on any fixed interval we have a version of the Obtuse Angle Condition and the probability of the process leaving an interval is exponentially small (for each $R>0$ the probability $X_t\notin [-R,R]$ is bounded by $Ce^{-R}$ by Markov's inequality).
\hf}
\end{remark}

Because of the local nature of \eqref{LOAC1d}, in this section we shall develop a pathwise approach to obtaining exponential decay \eqref{expdecayintro} of the derivative in direction $V_1$ of the semigroup under the condition \eqref{LOAC1d}. 

We now move on to proving that if the LOAC \eqref{LOAC1d} is satisfied with $V=V_1$, namely if
\begin{equation}\label{eq:LOACforV1}
    [V_1,V_0](x)V_1(x) \leq -\lambda(x) \lvert V_1(x)\rvert^2,
\end{equation}
then,  for every $t\geq 0, x\in\R, f\in \CVone$, one has 
		\begin{equation}\label{eq:gradest}
	\lvert V_1\cP_tf(x)\rvert \leq \mathbb{E}\left[\exp\left(-2\int_0^t\lambda(X_r^{(x)})dr\right) \right]^{\frac{1}{2}}\lVert V_1f\rVert_\infty, 
	\end{equation}
	where $\cP_t$ is the semigroup generated by \eqref{eq:SDEmain1d} and
	 $\CVone$ denotes the set of all smooth functions $f$ such that $\| V_1f \|_\infty$ is finite. 
	
	 We  denote by $\J_t=\J_t^x=\frac{\partial}{\partial x}X_t^{(x)}$ the  derivative of $X_t^{(x)}$ with respect to $x$; this (one dimensional process) exists by \cite[Theorem 7.3]{Kunita} and can be viewed as the solution of
\begin{equation}\label{eq:SDEforJacobian}
d\J_t^{x}= V_0'(\Xx)\J_t^{x} dt + \sqrt{2}V_1'(\Xx)\J_t^{x} \circ dB_t, \quad \J_0^{x}=1\,.
\end{equation}	
	With this notation in place, we  write derivatives of the semigroup in terms of derivatives of the process $X_t^{(x)}$.  
	\begin{lemma}\label{lem:derivativeofsemigp}
	    Let $\cP_t$ be the semigroup generated by the SDE \eqref{eq:SDEmain1d} and assume that the LOAC \eqref{eq:LOACforV1} is satisfied by the vector fields in \eqref{eq:SDEmain1d} with a function  $\lambda(x)$ such that  $\lambda(x)\geq -\kappa$ for every $x\in\R$, for some $\kappa\in \R$ (note that $\kappa$ need not be negative). Then 
	    \begin{align}
	V_1\cP_tf(x) = 
	\mathbb{E}[f'(\Xx) \J_t V_{1}(x)]\label{eq:devofsemigpintermsofV}
	\end{align}
	for every $x\in \R$ and $f\in \CVone$. For clarity we emphasize that here $f'(\Xx)$ denotes the derivative of $f$ evaluated at $\Xx$.
	\end{lemma}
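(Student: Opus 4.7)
The formula \eqref{eq:devofsemigpintermsofV} is the classical flow-derivative identity for Markov semigroups, so the plan is to derive it by differentiating $\cP_t f(x) = \mathbb{E}[f(\Xx)]$ under the expectation. Writing $V_1 = V_1(x)\partial_x$ and using the chain rule together with $\partial_x \Xx = \J_t$ (from \cite[Theorem 7.3]{Kunita}), the formal computation
$$V_1 \cP_t f(x) \;=\; V_1(x)\, \partial_x \mathbb{E}[f(\Xx)] \;=\; V_1(x)\, \mathbb{E}\!\left[f'(\Xx)\,\J_t\right]$$
is immediate, so the entire content of the lemma is the justification of the interchange of $\partial_x$ and $\mathbb{E}$ for $f \in \CVone$.

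For $f \in C_b^1(\R)$ the interchange is routine: the difference quotient $[f(X_t^{(x+h)}) - f(\Xx)]/h$ is dominated pathwise by $\|f'\|_\infty \sup_{|y-x|\leq |h|}|\J_t^y|$, which is uniformly integrable (in $h$) by standard $L^p$-estimates for the linearised SDE \eqref{eq:SDEforJacobian}, so dominated convergence delivers the identity. To pass from $C_b^1(\R)$ to a general $f\in\CVone$, I would approximate $f$ by a sequence of smooth truncations $f_n\in C_b^1(\R)$ agreeing with $f$ on $[-n,n]$, apply the formula to each $f_n$, and pass to the limit on both sides.

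The main obstacle is that $f'$ need not be bounded on $\R$ for $f\in\CVone$ (one only controls the product $V_1 f = V_1(\cdot)f'(\cdot)$ in sup-norm), which obstructs a naive dominated-convergence argument in the limit. The way around this, which I would establish by a direct Stratonovich computation, is the pointwise identity $\J_t V_1(x) = V_1(\Xx)\, Z_t$, where $Z_t$ satisfies the pathwise ODE $\dot Z_t = Z_t[V_1,V_0](\Xx)/V_1(\Xx)$ with $Z_0 = 1$; the LOAC \eqref{eq:LOACforV1} together with the bound $\lambda \geq -\kappa$ then gives $|Z_t|\leq e^{\kappa t}$ pathwise. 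Substituting this identity rewrites $f'(\Xx)\,\J_t V_1(x)$ as $(V_1 f)(\Xx)\, Z_t$, replacing the uncontrolled $f'$ by the bounded $V_1 f$, and the uniform bound on $Z_t$ then justifies both the interchange of $\partial_x$ with $\mathbb{E}$ and the passage to the limit in the approximation. The case $V_1(x)=0$ is trivial since both sides of \eqref{eq:devofsemigpintermsofV} vanish.
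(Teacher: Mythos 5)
Your proposal is correct in spirit and arrives at the same pathwise bound that the paper uses, but let me spell out how the two arguments line up and where yours takes a slightly riskier detour.

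The paper's own proof of this lemma is deliberately short: it records the chain-rule identity $V_1(f(X_t^{(x)})) = V_1(x)\,f'(X_t^{(x)})\,J_t$, takes expectations, and defers the interchange of derivative and expectation to the end of the proof of Theorem~\ref{prop:OAC}. There, rather than extracting any multiplicative cocycle, the paper works directly with the squared quantity $\Gamma_{s,t}=\lvert f'(X_t^{(x)})J_tJ_s^{-1}V_1(X_s^{(x)})\rvert^2$ and uses \cite[Eq.~(2.63)]{Nualart} (with $V=V_1$, so the Stratonovich noise term drops out) to obtain the pathwise inequality \eqref{eq:Gammaineq}; setting $s=t$ gives $\lvert V_1(f(X_t^{(x)}))\rvert^2=\Gamma_{0,t}\leq e^{2\kappa t}\lVert V_1f\rVert_\infty^2$, uniformly in $x$, which is exactly the dominated-convergence bound needed.

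Your $Z_t$ construction is an algebraic repackaging of the same inequality: writing $A_t:=J_t^{-1}V_1(X_t^{(x)})$, your $Z_t=V_1(x)/A_t$ satisfies the ODE you state, and the LOAC yields $Z_t\leq e^{\kappa t}$, which squared is precisely \eqref{eq:Gammaineq} at $s=t$. So the key idea -- replace the uncontrolled $f'$ by the bounded $V_1f$ at the price of a pathwise factor controlled by the LOAC -- is the right one and coincides with the paper's. The genuine issue is that your route divides by $V_1(X_t^{(x)})$ to define $Z_t$ and to write the ODE $\dot Z_t = Z_t\,[V_1,V_0](X_t^{(x)})/V_1(X_t^{(x)})$, which presupposes that $V_1$ does not vanish along the trajectory; you only dispose of the case $V_1(x)=0$ (the initial point), not $V_1(X_t^{(x)})=0$ at some later $t$ when $V_1(x)\neq 0$. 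The paper's formulation in terms of the squared process $\Gamma_{s,t}$, whose evolution is governed by \eqref{eq:jacinv} without any division, sidesteps this entirely, and this is worth emulating rather than patching. A secondary observation: once the pathwise bound is in hand, the truncation argument with $f_n\in C^1_b$ agreeing with $f$ on $[-n,n]$ is superfluous -- the uniform-in-$x$ bound $\lvert V_1(f(X_t^{(x)}))\rvert\leq e^{\kappa t}\lVert V_1f\rVert_\infty$ already justifies differentiating $\mathbb{E}[f(X_t^{(\cdot)})]$ under the expectation for a general $f\in\CVone$, which is why the paper never performs this approximation step.
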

	
	\begin{proof}
Fix $f\in \CVone$ and fix some initial condition $x\in \R$; then, by the chain rule, we have
\begin{equation*}
    V_1(f(X_t^{(x)})) = V_1(x)f'(X_t^{(x)})J_t.
\end{equation*}
Now we can take expectations to obtain
\begin{equation}\label{eq:intstepinderofsemigp}
   \mathbb{E}\left[V_1(f(X_t^{(x)}))\right] = \mathbb{E}\left[V_1(x)f'(X_t^{(x)})J_t\right].
\end{equation}
At the end of the proof of Theorem \ref{prop:OAC} we justify swapping the expectation and the derivative on the left hand side of the above equality.  After doing so we have \eqref{eq:devofsemigpintermsofV}.
	\end{proof}

Let us introduce the two parameter random process $\{\Gamma_{s,t}\}_{0\leq s\leq t}$, defined as follows:
	\begin{equation*}
	\Gamma_{s,t} = \left\lvert f'(\Xx)\J_t\J_s^{-1}V_1(X_s^{(x)})\right\rvert^2.
	\end{equation*}
	The significance of the process $\Gamma_{s,t}$ will be more clear in view of \eqref{eq:functiondefofgamma}.

For the time being notice that by \eqref{eq:devofsemigpintermsofV} we have
\begin{align*}
\lvert V_{1}\cP_tf(x) \rvert^2&\leq \mathbb{E}\left[\left\lvert  f'(\Xx)\J_tV_{1}(x)\right\rvert^2\right]= \mathbb{E}[\Gamma_{0,t}],
\end{align*}
and moreover, (using that $f$ belongs to $\CVone$) we may estimate $\Gamma_{t,t}$ by
\begin{equation*}
\Gamma_{t,t} =  \lvert V_{1}f(X_t^{(x)})\rvert^2\leq \lVert V_1f\rVert_\infty^2.
\end{equation*}	 
Hence to prove \eqref{eq:gradest} it is sufficient to prove the following inequality
	\begin{equation}\label{eq:requiredestimate}
	\mathbb{E}[\Gamma_{0,t}]\leq  \mathbb{E}\left[\exp\left(-2\int_0^t \lambda(X_s^{(x)})ds \right)\Gamma_{t,t}\right].
	\end{equation}
	Before proving \eqref{eq:requiredestimate}, we shall introduce some more notation. For each $\omega\in \Omega, s\leq t$ we may define the random flow map $\Phi_{s,t}:\R\to\R$ by
	\begin{equation*}
	    \Phi_{s,t}(x) := X_t^{(s,x)}, \quad t\geq s\geq 0.
	\end{equation*}
	Here $X_t^{(s,x)}$ denotes the solution to \eqref{eq:SDE} given that $X_s^{(s,x)}=x$. It is shown in \cite{Kunita} that for almost all $\omega\in\Omega$, $\Phi_{s,t}$ is a well-defined diffeomorphism from $\R$ to $\R$ and we shall denote by $J_{s,t}$ the derivative $\Phi_{s,t}'(X_s^{(x)})$. By differentiating the identity $\Xx = \Phi_{s,t}(X_s^{(x)})$ with respect to $x$, we have $J_t=\Phi_{s,t}'(X_s^{(x)})J_s$ and hence
	\begin{equation*}
	\J_{s,t}=\Phi_{s,t}'(X_s^{(x)})=\J_t\J_s^{-1}.
	\end{equation*}
	Analogously, if $f_{s,t}(\cdot) := f(\Phi_{s,t}(\cdot))$, then $f_{s,t}(X_s^{(x)}) := f(\Phi_{s,t}(X_s^{(x)}))$, so that $f_{s,t}'(X_s^{(x)})=f'(\Xx)\J_t\J_s^{-1}$ and we may write 
		\begin{equation}\label{eq:functiondefofgamma}
	\Gamma_{s,t} = \left\lvert V_{1}f_{s,t}(X_s^{(x)})\right\rvert^2.
	\end{equation}	
	
\begin{theorem}\label{prop:OAC}
	Assume the SDE \eqref{eq:SDEmain1d} has a unique strong solution for every initial datum $x \in \R$  and suppose that the Local Obtuse Angle Condition \eqref{eq:LOACforV1} is satisfied by the vector fields appearing in \eqref{eq:SDEmain1d} with $\lambda=\lambda(x)$ a continuous function such that  $\lambda(x)\geq -\kappa$ for every $x\in\R$ and some $\kappa\in\R$. Then \eqref{eq:gradest} holds.
\end{theorem}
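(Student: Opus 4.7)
My strategy is to establish a \emph{pathwise} inequality between $\Gamma_{0,t}$ and $\Gamma_{t,t}$, from which the statement follows after taking expectations. The representation in Lemma \ref{lem:derivativeofsemigp}, combined with Jensen's inequality, already gives $|V_1\cP_t f(x)|^2 \leq \mathbb{E}[\Gamma_{0,t}]$; thus it suffices to prove the pathwise bound
\begin{equation*}
\Gamma_{0,t} \leq \exp\!\Bigl(-2\!\int_0^t\!\lambda(X_r^{(x)})\,dr\Bigr)\Gamma_{t,t} \qquad \text{a.s.,}
\end{equation*}
since then taking expectation and using $\Gamma_{t,t}\leq \|V_1 f\|_\infty^2$ (valid because $f\in\CVone$) delivers \eqref{eq:gradest}.

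The central object is the auxiliary process $A_s := V_1(X_s^{(x)})J_s^{-1}$, $s\in[0,t]$. The critical structural observation, and what makes a purely pathwise argument feasible, is that although $A_s$ is built from the two nontrivial semimartingales $X_s^{(x)}$ and $J_s^{-1}$, its martingale part vanishes identically, so that $A_s$ is of bounded variation. To see this I would rewrite \eqref{eq:SDEforJacobian} in Stratonovich form to obtain $dJ_s^{-1} = -V_0'(X_s)J_s^{-1}\,ds - \sqrt{2}\,V_1'(X_s)J_s^{-1}\circ dB_s$, expand $d(V_1(X_s))$ by the Stratonovich chain rule, and combine the two via the Stratonovich product rule. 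The $\circ dB_s$-contributions cancel (both equal $\sqrt{2}\,V_1'(X_s)V_1(X_s)J_s^{-1}\circ dB_s$), leaving the pathwise ODE
\begin{equation*}
\dot A_s \;=\; -J_s^{-1}\,[V_1,V_0](X_s^{(x)}),
\end{equation*}
where the commutator is recognised via $V_1V_0'-V_1'V_0=[V_1,V_0]$.

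From this point the argument is ordinary calculus. Squaring yields $\frac{d}{ds}(A_s^2) = -2 J_s^{-2}V_1(X_s)[V_1,V_0](X_s)$, and invoking the LOAC \eqref{eq:LOACforV1} in the form $V_1(x)[V_1,V_0](x)\leq -\lambda(x)V_1(x)^2$ gives the pathwise differential inequality $\frac{d}{ds}(A_s^2)\geq 2\lambda(X_s^{(x)})A_s^2$. A direct Gronwall then produces $A_s^2 \geq A_0^2 \exp\bigl(2\!\int_0^s\lambda(X_r^{(x)})\,dr\bigr)$ almost surely. Setting $s=t$, using $A_0 = V_1(x)$ and $(f'(X_t^{(x)})J_t)^2 A_t^2 = (V_1 f)^2(X_t^{(x)}) = \Gamma_{t,t}$, and multiplying both sides by the $s$-independent factor $(f'(X_t^{(x)})J_t)^2$, rearranges exactly to the claimed pathwise inequality. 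Taking expectation then closes the proof.

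The step I expect to be the main technical obstacle is not the Gronwall argument itself but the exchange of $V_1$ and the expectation flagged at the end of the proof of Lemma \ref{lem:derivativeofsemigp}, i.e.\ the passage from \eqref{eq:intstepinderofsemigp} to \eqref{eq:devofsemigpintermsofV}. This is a differentiation-under-the-integral-sign issue requiring local uniform integrability of $f'(X_t^{(y)})J_t^{y}V_1(y)$ for $y$ in a neighbourhood of $x$; I would handle it via standard moment bounds on $X_t$ and the Jacobian flow $J_t$ extracted from their SDEs, together with the smoothness of $V_0, V_1$ assumed in \eqref{eq:SDEmain1d}. The hypothesis $\lambda(x)\geq -\kappa$ enters only to guarantee that the right-hand side of \eqref{eq:gradest} is actually finite, giving at worst a bound of order $e^{\kappa t}\|V_1 f\|_\infty$.
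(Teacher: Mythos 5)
Your main argument mirrors the paper's: the pathwise evolution equation for $J_s^{-1}V_1(X_s^{(x)})$ (which you re-derive by hand; the paper cites \cite[Eq.\ (2.63)]{Nualart}), the observation that the stochastic term vanishes since $[V_1,V_1]=0$, the LOAC, a pathwise Gronwall inequality giving $\Gamma_{0,t}\leq \exp(-2\int_0^t\lambda(X_r^{(x)})dr)\,\Gamma_{t,t}$, and finally taking expectations. The minor reorganization (you apply Gronwall to $A_s^2$ before multiplying by $(f'(X_t)J_t)^2$, the paper multiplies first) is cosmetic.

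There is, however, a gap in your treatment of the auxiliary interchange step and a misreading of the role of $\lambda(x)\geq-\kappa$. You propose to justify differentiating under the expectation in Lemma \ref{lem:derivativeofsemigp} via moment bounds on $X_t$ and the Jacobian flow $J_t$ "extracted from their SDEs." The theorem assumes only smoothness and strong existence, with no growth control on $V_0, V_1$; the Jacobian SDE \eqref{eq:SDEforJacobian} has coefficients $U_0'(X_t^{(x)}),\ V_1'(X_t^{(x)})$ evaluated along the possibly unbounded process, so no moment bound on $J_t$ can be "extracted" in this generality. The paper's proof instead closes the loop self-consistently: the pathwise inequality you have just established, together with $\lambda(x)\geq-\kappa$, gives $\lvert V_1(f(X_t^{(x)}))\rvert^2 = \Gamma_{0,t}\leq e^{2\kappa t}\Gamma_{t,t}\leq e^{2\kappa t}\lVert V_1f\rVert_\infty^2$ pathwise, hence a bound uniform in $x$ and $\omega$ (for fixed $t$), which is exactly what dominated convergence needs to license the interchange. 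So $\lambda\geq-\kappa$ is used precisely there — not merely, as you state, to guarantee finiteness of the right-hand side of \eqref{eq:gradest} (which would be vacuous anyway, since the inequality holds trivially when the right-hand side is $+\infty$).
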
	
	
	\begin{remark} \textup{Some clarifications on the statement of the above theorem. } \textup{\begin{itemize}
	\item Because the initial profile $f(x)$ is assumed to be smooth and the coefficients of the equation are smooth as well, the derivative $V_1\cP_t f$ always makes sense. Corollary \ref{cor1} below deals with the case in which $f$ is not smooth but just continuous and bounded. 
	    \item As we have already explained, we will require further conditions to ensure that the right hand side of \eqref{eq:gradest} decays exponentially. We will give conditions under which the right hand side of \eqref{eq:gradest} decays exponentially in Section \ref{sec:DonskerVaradhan}.
	    \item Theorem \ref{prop:OAC} (and Theorem \ref{prop:LOAChd}) give sufficient conditions to estimate the derivatives of diffusion semigroups. As we have already mentioned, the techniques of this section, and in particular the proof of such theorems,  rely on a ``pathwise" approach. Such an approach is ``pathwise" in the sense that it is crucial here that one waits to take expectations until the latest possible moment. 
	\end{itemize} }\hf
	\end{remark}
	
\begin{proof}[Proof of Theorem \ref{prop:OAC}]	
	We will use \cite[Equation (2.63)]{Nualart} which, in our notation and setting can be written as
	\begin{equation}\label{eq:jacinv}
	    d(J_t^{-1}V(\Xx)) = J_t^{-1}[V_0,V](X_t^{(x)})dt+\sqrt{2}J_t^{-1}[V_1,V](X_t^{x})\circ dB_t,
	\end{equation}
	where $V:\R\to\R$ is any smooth vector field. By taking $V=V_1$ in \eqref{eq:jacinv}, we obtain
	\begin{align*}
	d\left(\J_t^{-1}V_{1}(\Xx)\right)^2=   2\J_t^{-1}[V_0,V_{1}](\Xx)V_{1}(\Xx)(\J_t^{-1})dt .
	\end{align*}
	Integrating from $0$ to $s$ and multiplying both sides by $f'(\Xx)^2 \J_t^2$ one gets
		\begin{align*}
	\left\lvert f'(\Xx)\J_t\J_s^{-1} V_{1}(\Xx) \right\rvert^2 &=\left\lvert f'(\Xx)\J_t V_{1}(x)\right\rvert^2\\
	&+ 2\int_0^s f'(\Xx) \J_t\J_r^{-1}[V_0,V_{1}](X_r^{(x)})V_{1}(X_r^{(x)})\J_r^{-1}\J_t f'(\Xx)dr.
	\end{align*}
	Now we may apply \eqref{eq:LOACforV1} and obtain
	\begin{align*}
	\left\lvert f'(\Xx)\J_t\J_s^{-1} V_{1}(\Xx) \right\rvert^2 \geq &\left\lvert f'(\Xx)\J_t V_{1}(x)\right\rvert^2\\
	&+ 2\int_0^s \lambda(X_r^{(x)})\left\lvert f'(\Xx) \J_t\J_r^{-1}V_{1}(X_r^{(x)})\right\rvert^2 dr.
	\end{align*}
We can rewrite this in terms of $\Gamma_{s,t}$ as
			\begin{align*}\label{eq:Gammadecomp}
	\Gamma_{s,t} &\geq \Gamma_{0,t}+ 2\int_0^s\lambda(X_r^{(x)}) \Gamma_{r,t} dr.
	\end{align*}
That is,
\begin{equation}\label{eq:Gammaineq}
    \exp\left(-2\int_0^s(\lambda(X_r^{(x)}))  dr \right)\Gamma_{s,t} \geq \Gamma_{0,t}. 
\end{equation}
Taking expectations and setting $s=t$ one obtains \eqref{eq:requiredestimate}. It remains to justify that we may swap the expectation and the derivative on the left hand side of \eqref{eq:intstepinderofsemigp}. This follows from the dominated convergence theorem provided we have that
	\begin{equation*}
	\sup_{x\in\R} \lvert V_{1}(f(X_t^{(x)}))\rvert 
	\end{equation*}
	is bounded by a constant which may depend on $t$. By setting $s=t$ in \eqref{eq:Gammaineq} we have
	\begin{equation*}
	\lvert V_1(f(X_t^{(x)}))\rvert ^2=\Gamma_{0,t} \leq \exp\left(-2\int_0^t(\lambda(X_r^{(x)}))  dr \right)\Gamma_{t,t}.  
	\end{equation*}
	We may bound the right hand side using $-\lambda(x)\leq \kappa$ and $\Gamma_{0,t}\leq \lVert V_1f\rVert^2$, this gives
	\begin{equation*}
	\lvert V_1(f(X_t^{(x)}))\rvert ^2=\Gamma_{0,t} \leq e^{2\kappa t} \lVert V_1 f\rVert^2.  
	\end{equation*}
This concludes the proof.
\end{proof}

We now state a simple consequence of  Theorem \ref{prop:OAC}, Corollary \ref{cor1}. We then give some simple examples  to which Theorem \ref{prop:OAC} can be applied. Before stating  Corollary \ref{cor1} we observe that \eqref{eq:gradest} holds for smooth functions only. Corollary \ref{cor1} allows one to state an analogous result for functions $f$ which are only continuous and bounded. We start by recalling a well-known short-time smoothing result: for any compact set $K$  there is a constant $c=c(K)$ such that 
\be\label{sta}
\lv V_1\cP_tf(x) \rv \leq \frac{c(K)}{t} \| f \|_{\infty}, \quad f \in C_b(\R), t \in (0,1]. 
\ee
Using the above and the semigroup property,  by the same argument as in \cite[Note 3.2]{CrisanOttobre}, we obtain what follows. Such smoothing estimates hold under very general assumptions on the coefficients of the SDE, for example they do hold under the UFG condition, see Definition \ref{defufg} and \cite{Nee} for an account on the matter (note that UFG processes include both elliptic and uniformly hypoelliptic processes). 

\begin{corollary}\label{cor1}
Consider the SDE \eqref{eq:SDE} and assume that the LOAC  \eqref{LOAC1d} and the smoothing property \eqref{sta} hold.  Then, for any $t_0>0$ and compact set $K$ we can find a constant $c_{t_0,K}$ such that
	   	\begin{equation*}
	\lvert V_1\cP_tf(x)\rvert \leq c_{t_0,K}\mathbb{E}\left[\exp\left(-2\int_0^{t-t_0}\lambda(X_r^{(x)})dr\right) \right]^{\frac{1}{2}}\lVert f\rVert_\infty, \quad \forall x\in K, f\in C_b(\R^N), t\geq t_0.
	\end{equation*}
\end{corollary}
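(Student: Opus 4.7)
The plan is to reduce the continuous bounded case to the smooth case already covered by Theorem \ref{prop:OAC} by inserting a short ``smoothing window'' of length $t_0$ at the start, and controlling it with \eqref{sta}. This is the move alluded to by the reference to \cite[Note 3.2]{CrisanOttobre}.

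Fix $t_0>0$, a compact $K\subset\R$, $f\in C_b(\R)$, $t\geq t_0$ and $x\in K$. By the semigroup property,
\begin{equation*}
V_1 \cP_t f(x) \;=\; V_1\bigl(\cP_{t-t_0}\,g\bigr)(x), \qquad g := \cP_{t_0} f .
\end{equation*}
Under our standing smoothness assumptions on $V_0,V_1$ (together with \eqref{sta}), the function $g$ is smooth, so Theorem \ref{prop:OAC} applies to $g$ over the interval $[0,t-t_0]$ starting from $x$, giving
\begin{equation*}
\bigl|V_1 \cP_{t-t_0}\,g(x)\bigr| \;\leq\; \mathbb{E}\!\left[\exp\!\left(-2\int_0^{t-t_0}\lambda(X_r^{(x)})\,dr\right)\right]^{1/2}\, \|V_1 g\|_\infty .
\end{equation*}

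It then remains to bound $\|V_1 g\|_\infty=\|V_1 \cP_{t_0} f\|_\infty$ by $\|f\|_\infty$. If $t_0\leq 1$ this is exactly \eqref{sta}; if $t_0>1$, iterate \eqref{sta} on a finite number of consecutive subintervals of length $\leq 1$ and use the contractivity $\|\cP_s h\|_\infty\leq\|h\|_\infty$ between iterations. In either case one obtains $\|V_1 g\|_\infty\leq (c'/t_0)\|f\|_\infty$, with $c'$ absorbing the $K$-dependence inherited from \eqref{sta}. Setting $c_{t_0,K}:=c'/t_0$ and chaining with the previous display yields the claim.

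The main technical obstacle is reconciling the local character of \eqref{sta} (the constant depends on a compact) with the \emph{global} supremum $\|V_1 g\|_\infty$ that Theorem \ref{prop:OAC} requires. This is handled, exactly as in \cite[Note 3.2]{CrisanOttobre}, by enlarging the compact so as to dominate the portion of the state space that contributes to $V_1 g$ after the $\cP_{t_0}$ regularization. Once this localization point is dealt with, the combination of the pathwise exponential-decay inequality of Theorem \ref{prop:OAC} with short-time parabolic smoothing is routine.
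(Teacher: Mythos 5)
Your approach is essentially the one the paper (tacitly) intends: decompose via the semigroup property into a smoothing step $\cP_{t_0}$ followed by the pathwise estimate of Theorem \ref{prop:OAC} applied to the now-smooth function $g:=\cP_{t_0}f$, and then control $\|V_1 g\|_\infty$ by \eqref{sta}. This is precisely the device referenced by \cite[Note 3.2]{CrisanOttobre}, so the overall structure is correct.

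However, your closing paragraph does not actually fix the local/global tension you rightly flagged. Theorem \ref{prop:OAC} needs $\|V_1 g\|_\infty$ to be finite as a \emph{global} supremum (this is what bounds $\Gamma_{t-t_0,t-t_0}$ in the pathwise argument, evaluated at the random location $X_{t-t_0}^{(x)}$, which is not compactly supported), and ``enlarging the compact'' cannot certify a global bound: no enlargement covers $\R^N$. The clean way to make the chain of inequalities go through is to invoke the $x$-uniform form of the short-time Kusuoka--Stroock gradient estimate underlying \eqref{sta}, i.e.\ $\|V_1\cP_{t_0}f\|_\infty\le C\,t_0^{-1}\|f\|_\infty$ with $C$ independent of the compact; once that is in hand, $g\in \CVone$ and Theorem \ref{prop:OAC} applies verbatim, and the $K$-dependence of the final constant is merely inherited from the normalization in \eqref{sta}. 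Also, for $t_0>1$ one does not ``iterate'' \eqref{sta} on consecutive subintervals; rather one writes $V_1\cP_{t_0}f = V_1\cP_1(\cP_{t_0-1}f)$, applies \eqref{sta} once at time $1$, and uses the $L^\infty$-contractivity of $\cP_{t_0-1}$ to absorb the remainder.
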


The examples below illustrate the situation in which the OAC \eqref{OAC1d} does not hold but the LOAC \eqref{LOAC1d} does.
\begin{example}\label{ex:arctanprelim}\textup{
Consider the SDE
	\begin{equation}\label{eq:arctanSDE}
	dX_t=-\arctan(X_t)dt+\sqrt{2}dW_t.
	\end{equation}
	In this case $N=d=1$ and we have $V_0(x)=-\arctan(x)$, $V_1(x)=1$. Then the LOAC \eqref{LOAC1d} is satisfied with 
	\begin{equation*}
	    \lambda(x) = -\frac{[V_1,V_0](x)V_1(x)}{V_1(x)^2} = \frac{1}{1+x^2}.
	\end{equation*}
	In Figure \ref{fig:bplotforarctan} is a plot of $V_0$ and $\lambda$. Notice that because $\lambda(x)$ converges to $0$ as $x$ tends to $\pm \infty$ the Obtuse Angle Condition \eqref{OAC1d} does not hold. By Theorem \ref{prop:OAC} we have
			\begin{equation}\label{eq:arctannonexpest}
	\lvert \partial_x\cP_tf(x)\rvert \leq \mathbb{E}\left[\exp\left(-2\int_0^t\frac{1}{1+(X_r^{(x)})^2}dr\right) \right]^{\frac{1}{2}}\lVert \partial_xf\rVert_\infty.
	\end{equation}
	We will continue investigating this SDE in Example \ref{ex:arctan} where we will show that the weak error of the Euler approximation of \eqref{eq:arctanSDE} converges to zero uniformly in time. 
		\begin{figure}
		\centering
		\includegraphics[width=0.7\linewidth]{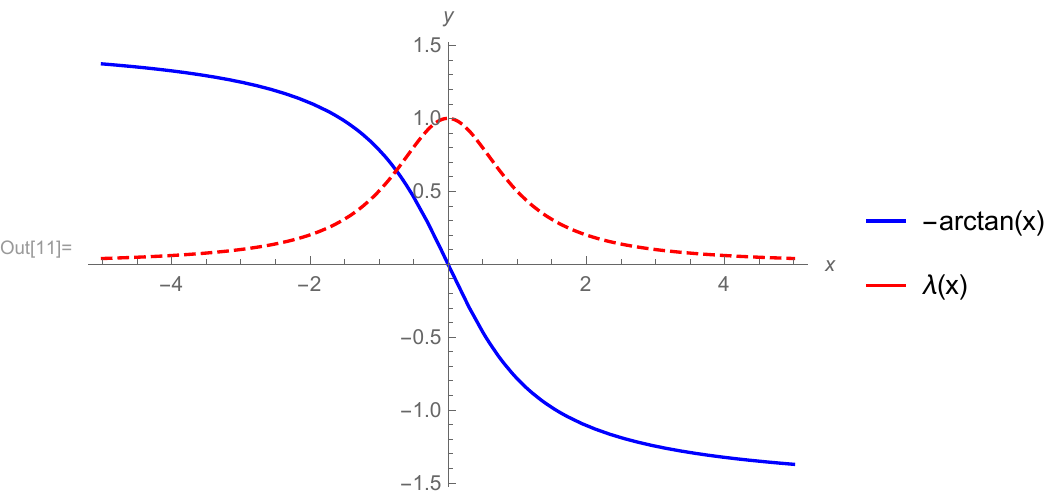}
		\caption{A plot of $b(x)$ and $\lambda(x)$ for the SDE \eqref{eq:arctanSDE}.}
		\label{fig:bplotforarctan}
	\end{figure}
	}\hf
\end{example}

\begin{example}\label{ex:sincos}
\textup{
Consider the one-dimensional SDE
\begin{equation}\label{eq:sincos}
dX_t=-\sin(X_t)dt+\sqrt{2}\cos(X_t)\circ dB_t.
\end{equation}
In this case we have $V_0(x)=-\sin(x)\partial_x$, $V_1(x)=\cos(x)\partial_x$, so that $[V_1, V_0]=-\pa_x$ and the LOAC \eqref{LOAC1d} is satisfied with
\begin{equation*}
\lambda(x)= \frac{1}{\cos(x)}.
\end{equation*}
 Here the OAC \eqref{OAC1d} is not satisfied (with $V=V_1$), indeed $\lambda$ is negative for $x\in (\pi/2,3\pi/2)$ and not defined for $x=k\pi+\pi/2$ for any $k\in\mathbb{Z}$. We also have that $\lambda(x)\geq 1$ for $x\in(-\pi/2,\pi/2)$. On the other hand, if $x\in (-\pi/2,\pi/2)$ then $\Xx\in(-\pi/2,\pi/2)$ (this can be seen directly from the SDE \eqref{eq:sincos} or see \cite[Excursus 4.5]{CCDO}). Therefore by Theorem \ref{prop:OAC} we have
\begin{equation*}
\lvert V_1\cP_tf(x)\rvert \leq e^{-t} \lVert V_1f\rVert_\infty, \quad \forall x\in\left(-\frac{\pi}{2}, \frac{\pi}{2}\right).
\end{equation*}
}\hf
\end{example}

\begin{remark} \label{compBakry}
\textup{ To simplify the discussion,  in this note we still consider  the simple setting of equation \eqref{eq:SDEmain1d}, i.e. we take \eqref{eq:SDE} with $d=N=1$. In \cite{CrisanOttobre} a Bakry-Emery type technique is used to prove that the OAC \eqref{OAC1d} (with $V=V_1$) implies estimates of the form \eqref{expdecayintro} (again with $V=V_1$). The argument used there (and in related literature) is a Gronwall-type argument and it fails if $\lambda=\lambda(x)$, i.e. if \eqref{LOAC1d} holds in place of \eqref{OAC1d}.  To explain why this is the case, we briefly recap the backbone of the argument used in \cite{CrisanOttobre} (and in related literature, see e.g. \cite{Dragoni, Bakry, MV_I}): let  
\begin{equation*}
    \Gamma(f):=\lvert V_1f(x)\rvert^2.
\end{equation*}
(Note that the above function $\Gamma(f)$ is the analogous of our $\Gamma_{s,t}$ in Theorem \ref{prop:OAC}).  The aim is to show the following inequality: 
\begin{equation}\label{eq:gronwallid}
    \partial_s \cP_{t-s}\Gamma(\cP_sf(x)) \leq -\lambda \cP_{t-s}\Gamma (\cP_sf (x)).
\end{equation}
Indeed, if the above holds, then the Gronwall lemma gives
\begin{equation*}
    \cP_{t-s}\Gamma (\cP_sf(x)) \leq
    e^{-\lambda} \cP_t \Gamma(f(x))
\end{equation*}
and the desired exponential decay of the derivative of the semigroup in the direction $V_1$ is obtained by just calculating the above in $s=t$. 
In order to obtain \eqref{eq:gronwallid} it is sufficient to prove (see \cite{CrisanOttobre}) the following inequality
\begin{equation}\label{eq:strategystandard}
    (\partial_t-\cL)\Gamma(\cP_tf(x)) \leq -\lambda \Gamma(\cP_tf(x)). 
\end{equation}
To prove the above the OAC was employed.  In the case when $\lambda=\lambda(x)$ we can follow the same argument and this time we obtain
\begin{equation*}
    (\partial_t-\cL)\Gamma(\cP_t f(x)) \leq -\lambda(x) \Gamma(\cP_t f(x)).
\end{equation*}
However instead of \eqref{eq:gronwallid} this implies
\begin{equation*}
    \partial_s \cP_{t-s}\Gamma(\cP_sf) \leq - \cP_{t-s}(\lambda (x)\Gamma(\cP_sf (x))).
\end{equation*}
Clearly, if $\lambda(x)$ is uniformly bounded below, then one can  use the previous argument again. If this is not the case then the Gronwall argument is no longer applicable.}
\hf
\end{remark}

\begin{lemma}\label{lem:higherorderOACforadditivenoise}
    Consider the SDE \eqref{eq:SDEadditivenoise1dim}; then \eqref{eq:expdecayuptoorder4nonuniform} holds for the semigroup generated by the process \eqref{eq:SDEadditivenoise1dim} provided the drift  $b(x)$ has bounded second, third, and fourth order derivatives, $b'(x)\leq 0$ and there is a positive constant $C>0$ such that
    \begin{equation}\label{eq:expdecayintermsofb}
    \mathbb{E}\left[\exp\left(\int_0^t b'(X_s^{(x)})ds\right)\right] \leq u(x) e^{-Ct} 
    \end{equation}
    for some positive function $u:\R\to \R$.
\end{lemma}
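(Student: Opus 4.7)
\emph{Approach.} The plan is to exploit the explicit exponential form of the first variation of the flow in the additive-noise case, together with the fact that $b'\leq 0$ bounds this Jacobian by $1$. I would express each derivative $\partial_x^k\cP_tf(x)$, $k=1,2,3,4$, as an expectation of a product of derivatives of $f$ evaluated at $\Xx$ with higher-order variations $J_t^{(m)}:=\partial_x^m\Xx$, $m\leq k$; bound each such variation in terms of $J_t:=J_t^{(1)}$ via the linearity of their evolution equations and variation of constants; and invoke the hypothesis \eqref{eq:expdecayintermsofb} at the very end.

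\emph{First derivative and structure of higher variations.} Since $V_1=\partial_x$, the Jacobian $J_t=\partial_x\Xx$ satisfies $dJ_t=b'(\Xx)J_t\,dt$ with $J_0=1$, giving the explicit formula
\[
J_t \;=\; \exp\!\left(\int_0^t b'(X_s^{(x)})\,ds\right),
\]
so that Lemma~\ref{lem:derivativeofsemigp} yields $\partial_x\cP_tf(x)=\E[f'(\Xx)J_t]$. Since $b'\leq 0$ implies $0<J_t\leq 1$, hypothesis \eqref{eq:expdecayintermsofb} gives $\lv\partial_x\cP_tf(x)\rv\leq u(x)e^{-Ct}\lVert f\rVert_{C_b^1}$. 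For $m\geq 2$, differentiating in $x$ the ODE for $J_t^{(m-1)}$ produces the linear equation $dJ_t^{(m)}=b'(\Xx)J_t^{(m)}\,dt + F_t^{(m)}\,dt$, where $F_t^{(m)}$ is a polynomial in $\{b^{(j)}(\Xx)\}_{2\leq j\leq m}$ and $\{J_t^{(l)}\}_{1\leq l<m}$. Variation of constants with fundamental solution $J_t$ expresses $J_t^{(m)}$ as a finite sum of iterated time-integrals of products $J_t\cdot b^{(j_i)}(X_{s_i}^{(x)})\cdot J_{s_i}^{n_i}$ with $n_i\geq 1$ and $0\leq s_i\leq t$. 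The decisive inequality is that, because $b'\leq 0$, for any integers $n,n_1,\ldots,n_q\geq 1$ and any $0\leq s_i\leq t$,
\[
J_t^n\prod_{i=1}^q J_{s_i}^{n_i}\;=\;\exp\!\left(n\!\int_0^t\!b'(X_r^{(x)})\,dr + \sum_i n_i\!\int_0^{s_i}\!b'(X_r^{(x)})\,dr\right)\;\leq\; J_t.
\]
An induction on $m$ then yields the pointwise bound $\lv J_t^{(m)}\rv\leq c_m(1+t^{m-1})J_t$ for $m=1,2,3,4$, with $c_m$ depending only on $\lVert b^{(j)}\rVert_\infty$ for $2\leq j\leq m$.

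\emph{Conclusion and main obstacle.} The Faà di Bruno formula for $\partial_x^k(f(\Xx))$, combined with interchanging expectation and differentiation (justified by the same dominated-convergence argument used at the end of the proof of Theorem~\ref{prop:OAC}), expresses $\partial_x^k\cP_tf(x)$ as a finite linear combination of terms of the form $\E[f^{(m)}(\Xx)\,J_t^{(\alpha_1)}\cdots J_t^{(\alpha_m)}]$ with $\alpha_1+\cdots+\alpha_m=k$. Applying the bound on $J_t^{(m)}$ above, noting that $\prod_{j}J_t^{(\alpha_j)}$ is then dominated by a polynomial in $t$ times $J_t^m$, and using $J_t^m\leq J_t$ for $m\geq 1$, one obtains
\[
\lv\partial_x^k\cP_tf(x)\rv\;\leq\; \tilde c_k\lVert f\rVert_{C_b^k}(1+t^{k-1})\,\E[J_t]\;\leq\; \tilde c_k\lVert f\rVert_{C_b^k}(1+t^{k-1})\,u(x)\,e^{-Ct}.
\]
Absorbing the polynomial factor into the exponential at the cost of replacing $C$ by any $\lambda_0\in(0,C)$ and inflating $u$ by a constant yields \eqref{eq:expdecayuptoorder4nonuniform}. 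The main work is the combinatorial bookkeeping of the Faà di Bruno expansion and of the iterated variation-of-constants formulae for $J_t^{(m)}$; once each summand is seen to contain $J_t$ as a factor together with only powers $J_{s_i}^{n_i}$ with $n_i\geq 1$, the contraction bound above reduces the whole estimate to a single application of \eqref{eq:expdecayintermsofb}.
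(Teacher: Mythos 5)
Your proposal is correct and follows essentially the same route as the paper's proof: expand $\partial_x^k\cP_tf$ via Faà di Bruno, use the explicit formula $J_t=\exp(\int_0^t b'(X_s^{(x)})\,ds)$ for the first variation and the variation-of-constants representations of $J_t^{(2)},J_t^{(3)},J_t^{(4)}$, and reduce to a single application of \eqref{eq:expdecayintermsofb}. In fact you supply more detail than the paper (which ends with ``from here it is straightforward''), in particular by making explicit the key contraction $J_t^n\prod_i J_{s_i}^{n_i}\leq J_t$ coming from $b'\leq 0$ and the resulting polynomial-in-$t$ bound $\lvert J_t^{(m)}\rvert\leq c_m(1+t^{m-1})J_t$, which is then absorbed into the exponential by slightly decreasing the decay rate.
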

The proof of this lemma can be found in Appendix \ref{app:auxproofs}. 

We now extend the results of Theorem \ref{prop:OAC} to the higher dimensional setting, so from now on we consider the SDE \eqref{eq:SDE} with  $N\geq 1, d \geq 1$.  Fix some direction $V$ in which the semigroup $\cP_t$ generated by \eqref{eq:SDE}  is differentiable, i.e. such that $V\cP_tf$ makes sense for all $f\in \CV$ (here $\CV$ denotes the set of all $C^\infty$--functions $f$ such that $\lVert Vf \rVert_\infty$ is finite).
In this situation we prove that if the LOAC \eqref{LOAC1d} is satisfied and $[V,V_k]=0$ for all $k\in\{1,\ldots,d\}$ then for every $t\geq 0, x\in\R, f\in \CV$ we have 
\begin{equation}\label{eq:gradesthd}
\lvert V\cP_tf(x)\rvert \leq \mathbb{E}\left[\exp\left(-2\int_0^t\lambda(X_r^{(x)})dr\right) \right]^{\frac{1}{2}}\lVert V f\rVert_{\infty}.
\end{equation}

\begin{theorem}\label{prop:LOAChd}
	Let $\cP_t$ be the semigroup associated with the SDE \eqref{eq:SDE} and let  $V$ be a vector field along which $\cP_t$ is differentiable.  Assume that $[V,V_k]=0$ for all $k\in\{1,\ldots,d\}$ (where $V_1, \dots, V_d$ are the fields appearing in \eqref{eq:SDE}) and suppose that the Local Obtuse Angle Condition \eqref{LOAC1d} is satisfied by $V$ and $V_0$ with $\lambda$ a continuous function such that  $\lambda(x)\geq -\kappa$ for every $x\in\R$ and some $\kappa\in\R$. Then \eqref{eq:gradesthd} holds.
\end{theorem}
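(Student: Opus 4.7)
My plan is to mirror the pathwise strategy used for Theorem \ref{prop:OAC}, with the Jacobian promoted to a matrix-valued process and the LOAC applied pointwise along the path of $X_t$. Let $J_t = \partial X_t^{(x)}/\partial x$ be the Jacobian matrix of the flow, so that by the chain rule
$$V\cP_tf(x) = \E\bigl[\nabla f(X_t^{(x)})^T J_t V(x)\bigr],$$
where the swap of $\E$ and $\partial_x$ is justified \emph{a posteriori} exactly as in Theorem \ref{prop:OAC} using a uniform-in-$x$ bound obtained from $\lambda \geq -\kappa$. Writing $a^T := \nabla f(X_t^{(x)})^T J_t$ (a row vector independent of $s$), Jensen's inequality yields $|V\cP_tf(x)|^2 \leq \E[\Gamma_{0,t}]$ with
$$\Gamma_{s,t} := \bigl| \nabla f(X_t^{(x)})^T J_t J_s^{-1} V(X_s^{(x)}) \bigr|^2, \qquad 0\leq s\leq t.$$
At $s=t$ we still have $\Gamma_{t,t} = |Vf(X_t^{(x)})|^2 \leq \lVert Vf\rVert_\infty^2$, so the whole question reduces, as before, to establishing the pathwise comparison $\Gamma_{0,t}\leq \exp\bigl(-2\int_0^t \lambda(X_r^{(x)})\,dr\bigr)\,\Gamma_{t,t}$.

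The next step is to track how $\Gamma_{s,t}$ evolves in $s$. The $N$-dimensional analogue of the Nualart identity \eqref{eq:jacinv}, applied to the vector field $V$, reads
$$d\bigl(J_s^{-1} V(X_s^{(x)})\bigr) = J_s^{-1}[V_0,V](X_s^{(x)})\,ds + \sqrt{2}\sum_{k=1}^d J_s^{-1}[V_k,V](X_s^{(x)})\circ dB_s^k.$$
The crucial role of the hypothesis $[V,V_k]=0$ for $k=1,\dots,d$ is that it annihilates \emph{every} Stratonovich stochastic integral on the right, so $W_s := J_s^{-1}V(X_s^{(x)})$ is a process of bounded variation with $dW_s = J_s^{-1}[V_0,V](X_s^{(x)})\,ds$. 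Setting $\xi := (J_s^{-1})^T a \in \R^N$, this gives
$$\partial_s \Gamma_{s,t} = 2(a^T W_s)(a^T J_s^{-1}[V_0,V](X_s^{(x)})) = 2\bigl(\xi^T V(X_s^{(x)})\bigr)\bigl(\xi^T [V_0,V](X_s^{(x)})\bigr).$$
Now the LOAC \eqref{LOAC1d}, once rewritten in the equivalent scalar form $(\xi^T [V_0,V](y))(\xi^T V(y))\geq \lambda(y)\,|\xi^T V(y)|^2$ using $\xi^T[V,V_0]\,V^T\xi = (\xi^T[V,V_0])(\xi^T V)$, applied at $y=X_s^{(x)}$ with this specific $\xi$, produces the pathwise differential inequality
$$\partial_s \Gamma_{s,t} \;\geq\; 2\lambda(X_s^{(x)})\,\Gamma_{s,t}.$$

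Integrating from $0$ to $t$ (a purely deterministic Gronwall step, since $W_s$ is of bounded variation along each sample path) gives $\Gamma_{0,t}\leq \exp\bigl(-2\int_0^t \lambda(X_r^{(x)})\,dr\bigr)\,\Gamma_{t,t}$. Taking expectations, using $\Gamma_{t,t}\leq \lVert Vf\rVert_\infty^2$, and applying Cauchy--Schwarz to pull this bound outside of the $\sqrt{\,\cdot\,}$ yields \eqref{eq:gradesthd}. The only genuine obstacle beyond the one-dimensional case is the correct bookkeeping of the matrix-valued Jacobian together with the vector-valued Nualart formula: once the commutator hypothesis is used to kill the noise terms in the evolution of $J_s^{-1}V(X_s^{(x)})$, the computation collapses to a clean deterministic Gronwall argument along each $\omega$, and the LOAC fits in naturally via the choice $\xi=(J_s^{-1})^T a$.
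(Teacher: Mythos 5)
Your proposal is correct and follows essentially the same pathwise strategy as the paper: same Jacobian representation $V\cP_tf(x)=\E[\nabla f(X_t)^TJ_tV(x)]$, same $\Gamma_{s,t}$, the vector-valued Nualart identity with $[V,V_k]=0$ killing the Stratonovich terms, the LOAC applied along the path, and a deterministic Gronwall step. The only cosmetic difference is that you differentiate the scalar $\Gamma_{s,t}$ directly, whereas the paper first evolves the rank-one matrix $J_s^{-1}V(X_s)V(X_s)^T(J_s^{-1})^T$ and then sandwiches it with $\nabla f(X_t)^TJ_t$; these are the same computation.
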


\section{Estimates for functionals of the occupation measure}\label{sec:DonskerVaradhan}

In Section \ref{sec:pathwiseOAC} we gave conditions under which the estimate \eqref{eq:gradesthd} holds. To obtain  exponential decay of derivatives it remains to find conditions under which there exists a constant $\lambda_0>0$ and a function $u:\R^N\to\R$ such that
\begin{equation}\label{eq:expdecay}
\mathbb{E}\left[\exp\left(-2\int_0^t \lambda(X_s^{(x)})ds \right)\right] \leq u(x) e^{-2\lambda_0 t}.
\end{equation}
This is the scope of this section. Clearly, a case under which the estimate \eqref{eq:expdecay}  follows immediately is the one in which the function $\lambda$ is bounded below by a positive constant i.e. $\lambda(x)\geq\lambda_0>0$. 

We can consider the weaker situation in which $\lambda\geq 0$ and there is some set $F$ such that $\lambda(x) \geq \lambda_F>0$ for some positive constant $\lambda_F$ and for every $x \in F$. Then we require that the process spends a positive proportion of time in the set $F$ (see Note \ref{Note:expmomentsarctangent}). More precisely, the following holds. 
\begin{prop}
Let $X_t^{(x)}$ be the solution of the SDE \eqref{eq:SDE}. Suppose that there exist some set $F\subseteq \R^N$ and a constant $r>0$ such that
		\begin{equation}\label{eq:recurrence}
		\frac{1}{t}\int_0^t \mathbbm{1}_F(X_s^{(x)}) ds \geq r \quad \mathbb{P}-a.s, \text{ for all } x\in\R^N.
		\end{equation}
		Let  $\lambda:\R^N\to\R$ be any  function\footnote{At this stage we do not assume that $\lambda(x)$ is the function appearing in the LOAC. } such that  $\lambda(x)\geq 0$ for every $x\in \R^N$ and there is a positive constant $\lambda_F$ such that $\lambda(x)\geq \lambda_F>0$ for all $x\in F$.
		Then, for all $t\geq 0$, we have
		\begin{equation}\label{eq:puttogether}
\mathbb{E}\left[\exp\left(-2\int_0^t \lambda(X_s^{(x)}) ds \right)\right] \leq \mathbb{E}\left[\exp\left(-2\int_0^t \lambda_F \mathbbm{1}_F(X_s^{(x)}) ds \right)\right]\leq \exp\left(-2r\lambda_F t\right).
		\end{equation}
Moreover, let $\cP_t$ be the semigroup associated with \eqref{eq:SDE} and $V$ a direction along which such a semigroup is differentiable. If, additionally, the vector field $V$ and the function $\lambda$ satisfy the assumptions of Theorem \ref{prop:LOAChd}, combining \eqref{eq:gradesthd} and \eqref{eq:puttogether}, one obtains	
\begin{equation*}
    \lvert V\cP_tf(x)\rvert \leq e^{-r\lambda_F t}
 \lVert Vf\rVert_\infty, \quad \text{ for all } f\in \CV, x\in\R^N,t\geq 0.
 \end{equation*}
\end{prop}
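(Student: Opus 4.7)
The plan is to prove \eqref{eq:puttogether} by a pair of purely pointwise monotonicity arguments, then apply Theorem \ref{prop:LOAChd} to obtain the semigroup estimate. Since the bounds are pathwise, no fine probabilistic tool is needed; in fact, the main observation is that the whole statement reduces to manipulating inequalities inside the exponential.

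First I would establish the pointwise bound $\lambda(x) \geq \lambda_F \mathbbm{1}_F(x)$ valid for every $x \in \R^N$. This follows immediately from the two assumptions on $\lambda$: if $x \in F$ then $\lambda(x) \geq \lambda_F = \lambda_F \mathbbm{1}_F(x)$, while if $x \notin F$ then $\lambda(x) \geq 0 = \lambda_F \mathbbm{1}_F(x)$. Integrating along a trajectory,
\begin{equation*}
\int_0^t \lambda(X_s^{(x)})\, ds \geq \lambda_F \int_0^t \mathbbm{1}_F(X_s^{(x)})\, ds \quad \mathbb{P}\text{-a.s.},
\end{equation*}
and since $z \mapsto e^{-2z}$ is monotone decreasing, exponentiating and taking expectations yields the first inequality of \eqref{eq:puttogether}.

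Next I would handle the second inequality of \eqref{eq:puttogether}. Here the recurrence assumption \eqref{eq:recurrence} directly gives the pathwise lower bound $\int_0^t \mathbbm{1}_F(X_s^{(x)})\, ds \geq rt$, so that
\begin{equation*}
\exp\!\left(-2\lambda_F\int_0^t \mathbbm{1}_F(X_s^{(x)})\, ds\right) \leq e^{-2r\lambda_F t} \quad \mathbb{P}\text{-a.s.},
\end{equation*}
and the expectation of a constant equals that constant, giving the second inequality.

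Finally, for the semigroup estimate I would invoke Theorem \ref{prop:LOAChd} (whose hypotheses are, by assumption, in force) to write
\begin{equation*}
\lvert V\cP_t f(x)\rvert \leq \mathbb{E}\!\left[\exp\!\left(-2\int_0^t \lambda(X_r^{(x)})\, dr\right)\right]^{1/2} \lVert V f\rVert_\infty,
\end{equation*}
and substitute \eqref{eq:puttogether}; the square root of $e^{-2r\lambda_F t}$ is $e^{-r\lambda_F t}$, giving exactly the claimed bound. There is no real obstacle: the proof is essentially a chain of monotone substitutions, with the recurrence assumption \eqref{eq:recurrence} doing all the work. The interesting content of the proposition lies not in the proof but in verifying \eqref{eq:recurrence} in examples, which is where the tools of Donsker--Varadhan large deviations (anticipated by the section title) will become relevant.
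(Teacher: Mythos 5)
Your proof is correct and is exactly the intended argument: the first inequality in \eqref{eq:puttogether} follows from the pointwise bound $\lambda(x)\geq \lambda_F\mathbbm{1}_F(x)$ together with monotonicity of $z\mapsto e^{-2z}$, the second is a direct pathwise consequence of the recurrence assumption \eqref{eq:recurrence}, and the gradient estimate follows by plugging these into the bound \eqref{eq:gradesthd} supplied by Theorem \ref{prop:LOAChd}. The paper states the proposition without a displayed proof precisely because the argument is this chain of monotone substitutions; your write-up matches that intent.
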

		We can view \eqref{eq:recurrence} as a form of recurrence. We can revisit this idea by using the large deviation principle for occupation measures introduced by Donsker and Varadhan. In a series of papers \cite{Donsker1}-\cite{Donsker4} Donsker and Varadhan introduced conditions to obtain a large deviation principle (LDP) for the {\em occupation measure} of $X_t^{(x)}$, i.e. for the random measure
\begin{equation}\label{OM}
l_t^x(\omega,A) = \frac{1}{t}\int_0^t \mathbbm{1}_{A}(X_s^{(x)}(\omega))ds.
\end{equation}
We briefly recall that the occupation measure $l_t^x$ satisfies a large deviation principle if there exists a rate function $I:\mathcal{M}\to \R$ such that
\begin{align}
    \limsup_{t\to\infty} \frac{1}{t}\log(\mathbb{P}(l_t^x\in C)) &\leq -\inf_{\mu\in C}I(\mu), \quad \text{for all closed sets } C\subseteq \mathcal{M}\label{eq:largedeviationsupper}\\
    \limsup_{t\to\infty} \frac{1}{t}\log(\mathbb{P}(l_t^x\in \mathcal{O})) &\leq -\inf_{\mu\in \mathcal{O}}I(\mu), \quad \text{for all open sets } \mathcal{O}\subseteq \mathcal{M}\label{eq:largedeviationslower}.
\end{align}
Note that $(\Omega,\mathcal{F},\mathbb{P})$ is the probability space on which the stochastic process $X_t$ is defined. Here $\mathcal{M}$ is endowed with the weak topology. We do not give details on this notion and refer the reader to \cite{Donsker1}-\cite{Donsker4}. For our purpose it is important to recall that if the occupation measure satisfies a LDP with rate function $I:\mathcal{M} \to \R$ (here $\mathcal{M}$ denotes the space of probability measures on $\R$) then for any weakly continuous functional\footnote{A functional $\Psi:\mathcal{M}\to \R$ is weakly continuous if given a sequence of measures $\mu_k$ which converge to a probability measure $\mu$ in the weak topology then $\Psi(\mu_k)$ converges to $\Psi(\mu)$.} $\Psi:\mathcal{M}\to\R$ and compact set $K\subseteq \R$, we have
\begin{equation}\label{eq:exptailforfunctional}
\lim_{t\to\infty} \frac{1}{t} \log \sup_{x\in K}\int_\Omega\exp\left(-t\Psi(l_t^x(\omega, \cdot)\right)\mathbb{P}(d\omega) = -\inf_{\mu\in \mathcal{M}}[\Psi(\mu)+I(\mu)].
\end{equation}
If $\lambda:\R^N\to\R$ is a continuous function we may take $\Psi:\mathcal{M}\to\R$ to be
\begin{equation*}
\Psi(\mu) = \int_{\R^N} \lambda(y) \mu(dy).
\end{equation*}
Then \eqref{eq:exptailforfunctional} becomes
\begin{equation}\label{eq:largedeviation}
\lim_{t\to\infty} \frac{1}{t} \log\sup_{x\in K} \mathbb{E}\left[\exp\left(-2\int_0^t\lambda(X_s^{(x)}) ds\right)\right] = -\inf_{\mu\in \mathcal{M}}\left[\int_{\R^N} 2\lambda(y) \mu(dy)+I(\mu)\right].
\end{equation}
\begin{prop}\label{prop:directfromDV} Let $X_t^{(x)}$ be the solution of the SDE \eqref{eq:SDE}. 
Suppose the occupation measure \eqref{OM} satisfies a LDP with rate function $I$ and assume there is a continuous function $\lambda:\R^N\to\R$ such that \eqref{eq:gradesthd} holds for some vector field $V$. If
\begin{equation}\label{eq:DVrhs}
\inf_{\mu\in M}\left[2\int_{\R^N} \lambda(y) \mu(dy)+I(\mu)\right] >0
\end{equation}
then   for each compact set $K\subset\R^N$ there exists a constant $C_K>0$ such that
\begin{equation}\label{eq:fullderivativeestimate}
    \sup_{x\in K}\lvert V\cP_tf(x)\rvert\leq C_K e^{-\lambda_0 t} \lVert Vf\rVert_\infty, \quad \forall f\in \CV,  
\end{equation}
for some $\lambda_0>0$ (independent of the compact set $K$). 
\end{prop}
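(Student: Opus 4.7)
The plan is to combine the pathwise bound \eqref{eq:gradesthd} with the large deviation identity \eqref{eq:largedeviation} in a straightforward way. Let me set $\kappa := \inf_{\mu\in \mathcal{M}}\left[2\int_{\R^N}\lambda(y)\mu(dy) + I(\mu)\right]$, which is strictly positive by the standing assumption \eqref{eq:DVrhs}. The idea is to choose any $\lambda_0 \in (0,\kappa/2)$; this choice depends only on $\kappa$ (hence on the SDE and on $\lambda$) but not on the compact set $K$, which will give the stated uniformity of $\lambda_0$ in $K$.

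First I would apply the LDP identity \eqref{eq:largedeviation} with $\Psi(\mu) = \int_{\R^N}\lambda(y)\mu(dy)$: since the right-hand side equals $-\kappa < -2\lambda_0$, there exists $T_K > 0$ (depending on $K$ and on $\lambda_0$) such that for every $t \geq T_K$,
\begin{equation*}
\sup_{x \in K} \mathbb{E}\!\left[\exp\!\left(-2\int_0^t \lambda(X_s^{(x)})\,ds\right)\right] \leq e^{-2\lambda_0 t}.
\end{equation*}
Substituting this into \eqref{eq:gradesthd} (after taking square roots) immediately gives $\sup_{x \in K}\lvert V\cP_tf(x)\rvert \leq e^{-\lambda_0 t}\lVert Vf\rVert_\infty$ for all $t \geq T_K$ and $f \in \CV$, which is the desired bound with constant $1$ in the long-time regime.

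Second, I would handle the short-time window $t \in [0,T_K]$. The main obstacle here is that the LDP only gives asymptotic information, so a separate argument is needed to uniformly control the quantity $M_K := \sup_{t \in [0,T_K]}\sup_{x \in K}\mathbb{E}[\exp(-2\int_0^t \lambda(X_s^{(x)})ds)]$. Under the hypotheses used to derive \eqref{eq:gradesthd} (namely Theorem \ref{prop:LOAChd}), the function $\lambda$ is bounded below by a constant $-\kappa'$, so the integrand is dominated by $e^{2\kappa' T_K}$, giving a finite $M_K$. More generally, one can use continuity of $\lambda$ together with moment estimates on the process started from the compact set $K$ to ensure $M_K < \infty$. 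Plugging this into \eqref{eq:gradesthd} yields $\sup_{x \in K}\lvert V\cP_tf(x)\rvert \leq \sqrt{M_K}\,e^{\lambda_0 T_K}\,e^{-\lambda_0 t}\lVert Vf\rVert_\infty$ on the short-time interval.

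Finally, setting $C_K := \max\{1, \sqrt{M_K}\,e^{\lambda_0 T_K}\}$ and combining the two regimes produces the uniform estimate \eqref{eq:fullderivativeestimate} for all $t \geq 0$, $x \in K$ and $f \in \CV$, with $\lambda_0$ independent of $K$. The nontrivial content of the argument is really the LDP identity \eqref{eq:largedeviation}, which is imported from Donsker--Varadhan; once this is in hand, the derivative estimate is essentially a two-line consequence of \eqref{eq:gradesthd}, with the only delicate point being the elementary short-time bound described above.
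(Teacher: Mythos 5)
Your proof is correct and takes the same route the paper implies: the paper states Proposition~\ref{prop:directfromDV} without an explicit proof, leaving it as the immediate consequence of the LDP identity \eqref{eq:largedeviation} (derived just before) combined with the pathwise bound \eqref{eq:gradesthd}. You fill in the details correctly: the constant $\kappa$ is independent of $K$, so $\lambda_0\in(0,\kappa/2)$ can be chosen independent of $K$; the LDP gives the bound for $t\ge T_K$; and the short-time window $[0,T_K]$ is handled by a crude a priori bound folded into $C_K$. This last step is the one genuine detail the paper leaves implicit, and you treat it properly, using the lower bound $\lambda\ge -\kappa'$ inherited from the hypotheses of Theorem~\ref{prop:LOAChd}. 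Your parenthetical ``more generally, one can use continuity of $\lambda$ together with moment estimates'' is not actually justified as stated (continuity alone does not control $\lambda$ at spatial infinity, and no moment hypothesis is assumed in the proposition), but since your main argument relies on the $\lambda\ge -\kappa'$ bound --- which is what holds in every application in the paper --- this aside does not affect the validity of the proof.
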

 We recall that in \cite{Donsker4} a set of conditions is given in order for the occupation measure to satisfy a LDP. These are stated in  Hypothesis \ref{hyp:Donskerhyp} below.\footnote{Here such conditions are stated in our notation and setting.}

\begin{hypothesis}\label{hyp:Donskerhyp}
Let $\Xx$ be the solution of \eqref{eq:SDE} and $\cL$ be the corresponding generator.
\begin{enumerate}
\item There exists a function $\DV:\R^N\to \R$ and a sequence $u_n\in D(\cL)$ (here $D(\cL)$ denotes the domain of the operator $\cL:D(\cL)\subseteq C_b(\R^N) \to C_b(\R^N)$) such that the following properties hold:\label{hyp:Donskerupperbound}
\begin{enumerate}[(\ref{hyp:Donskerupperbound}a)]
\item The set $\{x\in\R^N:\DV(x)\geq \ell\}$ is compact for each $\ell\in\R$;\label{ass:compact}
\item For all $n\in\N, x\in\R^N$ we have $u_n(x)\geq 1$; \label{ass:lowerboundun}
\item For each compact set $K\subseteq \R^N$,\label{ass:upperboundforun}
\begin{equation*}
\sup_{x\in K}\sup_{n\in\N} u_n(x)<\infty;
\end{equation*}
\item For each $x\in\R^N$, \label{ass:limitdefofV}
\begin{equation}\label{eq:limitdefofV}
\lim_{n\to\infty}\frac{\cL u_n(x)}{u_n(x)} = \DV(x);
\end{equation}
\item For some $A<\infty$ \label{ass:boundsforLun}
\begin{equation}\label{eq:boundsforLun}
\sup_{n\in\N, x\in\R^N}\frac{\cL u_n(x)}{u_n(x)} \leq A;
\end{equation}
\end{enumerate}
\item Assume that the law of $X_t^{(x)}$ admits a density  $p(t,x,y)$ with respect to Lebesgue measure on $\R^N$ such that for all $x\in\R^N$:\label{hyp:Donskerlowerbound}
\begin{enumerate}[(\ref{hyp:Donskerlowerbound}a)]
\item $p(1,x,y)>0$ for almost all $y\in\R^N$ \label{ass:denpos}
\item The map $x\mapsto p(1,x,\cdot)$ is a continuous map from $\R^N$ to $L^1$. \label{ass:dencts}
\end{enumerate}
\end{enumerate}
\end{hypothesis}

\begin{remark}\label{rem:Donskerhyp}
\begin{itemize}\textup{
Let us comment on the above hypothesis.
    \item The first set of assumptions, Hypothesis \ref{hyp:Donskerhyp} \ref{ass:compact}--\ref{ass:boundsforLun}, are sufficient for an upper bound in the large deviation principle to hold, i.e. there is a rate function $I:\mathcal{M}\to \R$ such that \eqref{eq:largedeviationsupper} holds. One strategy to construct the sequence $u_n$ appearing in Hypothesis \ref{hyp:Donskerhyp} is as follows: first we find a pair of functions $u,\DV:\R^N\to\R$ such that
\begin{equation*}
    \cL u(x) = \DV(x)u(x)
\end{equation*}
and we require that $u\geq 1$, $\DV(x)$ is bounded above but tends to $-\infty$ as $\lvert x\rvert\to\infty$; we then construct the sequence $\{u_n\}_{n\in\N}$ by defining $u_n(x) = u(n\theta( x/n))$ where $\theta:\R^N\to\R^N$ is a smooth function such that for each component $i\in\{1,\ldots,d\}$ we have $\theta(-x)^i=-\theta(x)^i$ and 
\begin{equation*}
	\theta(y)^i = \begin{cases}
	y^i, & 0\leq y^i \leq 1;\\
	\text{smooth and increasing,} & 1\leq y^i\leq 2;\\
	2, & y^i\geq 2.
	\end{cases}
	\end{equation*}}
\textup{	The second set of assumptions, Hypothesis \ref{hyp:Donskerhyp} (\ref{hyp:Donskerlowerbound}), are sufficient for a lower bound in the large deviation principle, i.e. under Hypothesis \ref{hyp:Donskerhyp} \ref{ass:denpos}--\ref{ass:dencts} there is a rate function $I:\mathcal{M}\to \R$ such that \eqref{eq:largedeviationslower} holds. Note that in the case when \eqref{eq:SDE} satisfies a uniform ellipticity condition, i.e. there is some constant $\nu>0$ such that $V_1(x) \geq \nu>0 $ for all $x\in\R$, then Hypothesis \ref{hyp:Donskerhyp} \ref{ass:denpos}--\ref{ass:dencts} are satisfied (in contrast, under the weaker UFG condition -- see Appendix \ref{app:UFG} -- this latter set of assumptions is not satisfied).
\item Note that Hypothesis \ref{hyp:Donskerhyp} \ref{ass:compact} implies that $\DV$ is not bounded below, while Hypothesis \ref{hyp:Donskerhyp} \ref{ass:limitdefofV}  and Hypothesis \ref{hyp:Donskerhyp} \ref{ass:boundsforLun} imply that $\DV$ is bounded above by $A$.}
\end{itemize}
\hf
\end{remark}

By \cite[Theorem 7.2 and Theorem 8.1]{Donsker3} under Hypothesis \ref{hyp:Donskerhyp} the limit in \eqref{eq:largedeviation} holds with
\begin{equation}\label{eq:Idef}
I(\mu) = \sup_{u\in D(\cL), u>0} -\int_{\R^N}\frac{\cL u}{u} d\mu.
\end{equation} 
Note that the rate function $I$ is always non-negative (just take $u=const$). 
In order to prove that \eqref{eq:gradesthd} implies \eqref{expdecayintro} when Hypothesis \ref{hyp:Donskerhyp} is satisfied it remains to show that the right hand side of \eqref{eq:largedeviation} is positive. Note that by Fatou's lemma and \eqref{eq:boundsforLun} we have
 \begin{equation*}
I(\mu) \geq \liminf_{n\to\infty}\int_{\R^N} -\frac{\cL u_n}{u_n} d\mu  \geq \int_{\R^N} -\DV d\mu.
\end{equation*}
In particular
\begin{equation*}\label{eq:sufficientforfullDosnker}
\inf_{\mu\in M}\left[\int_{\R^N} \lambda(y) \mu(dy)+I(\mu)\right] \geq \inf_{\mu\in M} \int_{\R^N} (\lambda(y) -\DV(y)) \mu(dy).
\end{equation*}

We have therefore proven the following.
\begin{prop}\label{prop:simpleDVprop}
 Let $X_t^{(x)}$ be the solution of the SDE \eqref{eq:SDE} with $X_0^{(x)}=x$.   Assume that Hypothesis \ref{hyp:Donskerhyp} holds and there exist some continuous function  $\lambda:\R^N\to\R$ and a constant $\lambda_0>0$ such that $2\lambda(x)-\DV(x)\geq 2\lambda_0$ for all $x\in\R^N$. Then for each compact set $K\subseteq \R^N$ there is a constant $C_K$ such that
    \begin{equation*}
    \sup_{x\in K}\mathbb{E}\left[\exp\left(-2\int_0^t\lambda(X_r^{(x)})dr\right) \right] \leq C_Ke^{-2\lambda_0 t}, \quad \forall t\geq 0.
    \end{equation*}
\end{prop}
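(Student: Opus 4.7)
The plan is to assemble the ingredients already developed in the paragraphs preceding the statement; the argument has three steps.

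First, I would invoke the Donsker--Varadhan LDP. By \cite[Theorems 7.2 and 8.1]{Donsker3}, Hypothesis \ref{hyp:Donskerhyp} guarantees that the occupation measure $l_t^x$ satisfies a large deviation principle with rate function $I$ given by \eqref{eq:Idef}. Applying this to the weakly continuous functional $\Psi(\mu) := 2\int \lambda\, d\mu$ (continuity of $\Psi$ follows from the continuity of $\lambda$ together with the fact that $\lambda$ is bounded below by $\DV+2\lambda_0$, and $\DV$ is controlled above by $A$) produces the limit \eqref{eq:largedeviation}.

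Next, I would lower bound the infimum appearing on the right-hand side of \eqref{eq:largedeviation}. The estimate $I(\mu) \geq \int(-\DV)\, d\mu$ is already in hand, obtained via Fatou's lemma applied to the sequence $-\cL u_n / u_n$ using the pointwise limit \eqref{eq:limitdefofV} and the uniform upper bound \eqref{eq:boundsforLun}. Combining with the hypothesis $2\lambda - \DV \geq 2\lambda_0$ and the normalisation $\mu(\R^N)=1$, one obtains
$$\inf_{\mu\in\mathcal{M}}\left[2\int \lambda\,d\mu + I(\mu)\right] \;\geq\; \inf_{\mu\in\mathcal{M}} \int (2\lambda - \DV)\,d\mu \;\geq\; 2\lambda_0.$$
Substituting into \eqref{eq:largedeviation} immediately yields $\limsup_{t\to\infty} t^{-1}\log \sup_{x\in K}\mathbb{E}[\exp(-2\int_0^t\lambda(X_s^{(x)})\,ds)] \leq -2\lambda_0$.

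Finally, I would convert this asymptotic statement into the stated uniform-in-time inequality. For any $\epsilon \in (0,2\lambda_0)$ the limsup supplies some $T_\epsilon>0$ such that $\sup_{x\in K}\mathbb{E}[\cdot] \leq e^{-(2\lambda_0-\epsilon)t}$ for all $t\geq T_\epsilon$; on the complementary interval $[0,T_\epsilon]$ a crude a priori bound can be obtained from the Feynman--Kac martingale identity $\mathbb{E}[u_n(X_t^{(x)})\exp(-\int_0^t (\cL u_n/u_n)(X_s^{(x)})\,ds)] = u_n(x)$, together with Hypothesis \ref{hyp:Donskerhyp} \ref{ass:lowerboundun}--\ref{ass:boundsforLun} and the compact-set bound Hypothesis \ref{hyp:Donskerhyp} \ref{ass:upperboundforun}. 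The two regimes are then absorbed into a single constant $C_K$. The main obstacle is precisely this last step: since the hypotheses entail no uniform lower bound on $\lambda$, the short-time control is not completely trivial, and a small loss in the exponential rate may need to be absorbed into $C_K$ (alternatively, this loss is eliminated by strengthening the assumption to $2\lambda-\DV \geq 2\lambda_0+\eta$ for some $\eta>0$ and relabelling the constants).
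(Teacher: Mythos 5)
Your first two steps reproduce the paper's prose: invoke the Donsker--Varadhan LDP via \cite[Theorems 7.2 and 8.1]{Donsker3}, obtain \eqref{eq:largedeviation}, lower bound $I(\mu)\geq\int(-\DV)\,d\mu$ by Fatou and \eqref{eq:boundsforLun}, and conclude that the infimum on the right-hand side of \eqref{eq:largedeviation} is at least $2\lambda_0$. That much is correct. You have also correctly sensed that the third step is where the argument strains, but neither of your two fallbacks actually closes the gap. The LDP limit yields only $\sup_{x\in K}\mathbb{E}[\cdots]\leq e^{-(2\lambda_0-\epsilon)t}$ for $t\geq T_\epsilon$, and an $\epsilon$-loss in the exponential rate cannot be absorbed into a $t$-independent constant $C_K$ (that would require $C_K\geq e^{\epsilon t}$ for all large $t$). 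Strengthening the hypothesis to $2\lambda-\DV\geq 2\lambda_0+\eta$ does rescue the exponent, but it proves a different statement. So, as written, your argument falls short of the proposition as stated.

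The exact rate follows from a direct Feynman--Kac argument, which is precisely what the paper carries out (under the weaker Hypothesis \ref{hyp:upperbound}) in the proof of Theorem \ref{thm:Lyapunovbound}; Proposition \ref{prop:simpleDVprop} is best read as a special case of that theorem, with the LDP discussion preceding it supplying intuition rather than a complete proof. Concretely: set $\psi_n(x,t)=\mathbb{E}\bigl[u_n(X_t^{(x)})\exp\bigl(-\int_0^t \tfrac{\cL u_n}{u_n}(X_s^{(x)})\,ds\bigr)\bigr]$. By Feynman--Kac and uniqueness for the associated initial-value problem, $\psi_n(x,t)=u_n(x)$, so $u_n\geq 1$ gives $u_n(x)\geq\mathbb{E}\bigl[\exp\bigl(-\int_0^t \tfrac{\cL u_n}{u_n}(X_s^{(x)})\,ds\bigr)\bigr]$. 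Taking $\liminf_n$, applying Fatou, exchanging $\liminf$ and $\exp$ by continuity, then applying reverse Fatou (justified by \eqref{eq:boundsforLun}) and finally \eqref{eq:limitdefofV} yields $u(x)\geq\mathbb{E}\bigl[\exp\bigl(-\int_0^t\DV(X_s^{(x)})\,ds\bigr)\bigr]$ with $u:=\liminf_n u_n$. Since $2\lambda-\DV\geq 2\lambda_0$, i.e. $-\DV\geq -2\lambda+2\lambda_0$, this gives $\mathbb{E}\bigl[\exp\bigl(-2\int_0^t\lambda(X_s^{(x)})\,ds\bigr)\bigr]\leq u(x)\,e^{-2\lambda_0 t}$ for every $t\geq 0$, with no loss in the rate. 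Setting $C_K:=\sup_{x\in K}u(x)$, which is finite by Hypothesis \ref{hyp:Donskerhyp} \ref{ass:upperboundforun} because $u(x)\leq\sup_n u_n(x)$, completes the proof.
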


\begin{remark}\textup{
Note that since $\DV$ tends to $-\infty$ as $x\to \pm\infty$, for $\lvert x\rvert$ sufficiently large we have $\DV(x)<0$ in which case the condition $2\lambda-\DV\geq 2\lambda_0$ is weaker than the requirement that $\lambda\geq \lambda_0>0$ for $\lvert x\rvert$ sufficiently large. In Example \ref{ex:bump} we illustrate a case in which we are able to find a constant $\lambda_0>0$ such that  $2\lambda(x)-\DV(x)>2\lambda_0$ for all $x\in\R$ but $\lambda(x_0)<0$ for some $x_0\in \R$. }
\end{remark}

 Hypothesis \ref{hyp:Donskerhyp} is stronger than we require in order to control  $\lvert V\cP_tf(x)\rvert$. Indeed all we require is an upper bound for the left hand side of \eqref{eq:largedeviation} and we can achieve this under the following conditions.
\begin{hypothesis}\label{hyp:upperbound}
\begin{enumerate}
	\item With the same notation and setting as Hypothesis \ref{hyp:Donskerhyp}, there exist a function $\DV:\R^N\to \R$ and a sequence $u_n\in D(\cL)$ such that conditions \ref{ass:lowerboundun} - \ref{ass:boundsforLun} of Hypothesis \ref{hyp:Donskerhyp} hold.
	\item There exist a constant $\lambda_0>0$ such that
	\begin{equation}\label{eq:Vgap}
	2\lambda(x)-\DV(x) \geq 2\lambda_0>0.
	\end{equation}
\end{enumerate}
\end{hypothesis}

In particular we are no longer assuming that $\DV$ is unbounded from below,  which was required by Hypothesis \ref{hyp:Donskerhyp} \ref{ass:compact} (see Note \ref{rem:Donskerhyp}); instead, we require the existence of some constant $\lambda_0>0$ such that \eqref{eq:Vgap} holds.
Hypothesis \ref{hyp:upperbound} (1) is weaker than Hypothesis \ref{hyp:Donskerhyp} and the price we pay is that \eqref{eq:Vgap} is harder to satisfy than when $\DV$ was unbounded, however we will see in Example \ref{ex:arctan} that Hypothesis \ref{hyp:upperbound} is satisfied although Hypothesis \ref{hyp:Donskerhyp} is not.

\begin{theorem}\label{thm:Lyapunovbound}
	Assume that Hypothesis \ref{hyp:upperbound} holds for the SDE \eqref{eq:SDE}.
	Then \eqref{eq:expdecay} holds
with $u(x):=\liminf_{n\to\infty}u_n(x)$ (where $\{u_n\}$ is the sequence appearing in Hypothesis \ref{hyp:upperbound}).

Moreover, let $\cP_t$ be the semigroup associated with \eqref{eq:SDE} and $V$ a direction along which such a semigroup is differentiable. If, additionally, \eqref{eq:gradesthd} holds\footnote{If the vector field $V$ and the function $\lambda$ satisfy the assumptions of Theorem \ref{prop:LOAChd} then \eqref{eq:gradesthd} holds.} then combining \eqref{eq:gradesthd} and \eqref{eq:expdecay}, one obtains	
\begin{equation}\label{eq:fullderivativeestimatewithu}
    \lvert V\cP_tf(x)\rvert^2 \leq u(x) e^{-2\lambda_0 t}\lVert Vf\rVert_\infty^2, \quad \text{ for all } f\in \CV, x\in \R^N, t\geq 0 \,.
\end{equation}
\end{theorem}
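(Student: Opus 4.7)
The plan is to first reduce the claim to a Feynman--Kac type bound driven purely by $\DV$, and then exploit the sequence $u_n$ from Hypothesis \ref{hyp:upperbound} to obtain that bound pointwise in $n$ before passing to the limit. Since \eqref{eq:Vgap} gives $-2\lambda(x)\leq -\DV(x)-2\lambda_0$, we immediately get
\begin{equation*}
\mathbb{E}\left[\exp\left(-2\int_0^t \lambda(X_s^{(x)})\,ds\right)\right] \leq e^{-2\lambda_0 t}\,\mathbb{E}\left[\exp\left(-\int_0^t \DV(X_s^{(x)})\,ds\right)\right],
\end{equation*}
so everything reduces to showing $\mathbb{E}[\exp(-\int_0^t \DV(X_s^{(x)})ds)]\leq u(x)$ with $u(x)=\liminf_n u_n(x)$.

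For the main step, I would fix $n$ and set $\phi_n(x):=\cL u_n(x)/u_n(x)$, so by construction $\cL u_n = \phi_n u_n$. Applying It\^o's formula to the process $Z_t^n := u_n(X_t^{(x)})\exp(-\int_0^t \phi_n(X_s^{(x)})ds)$, the finite variation part cancels exactly, so $Z_t^n$ is a local martingale. Localizing along the stopping times $\tau_k=\inf\{t:|X_t^{(x)}|\geq k\}$ (so that $u_n(X_{\cdot\wedge\tau_k}^{(x)})$ is bounded by the compact-set estimate \ref{ass:upperboundforun} of Hypothesis \ref{hyp:Donskerhyp}, and the exponential is bounded by $e^{\lVert\phi_n^{-}\rVert_{\infty,B_k} t}$ on that ball) gives $\mathbb{E}[Z_{t\wedge\tau_k}^n]=u_n(x)$. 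Letting $k\to\infty$ and using $\phi_n\leq A$ to get a uniform-in-$k$ lower bound on $\exp(-\int_0^{t\wedge\tau_k}\phi_n\,ds)$ of $e^{-At}$, Fatou's lemma produces $\mathbb{E}[u_n(X_t^{(x)})\exp(-\int_0^t \phi_n(X_s^{(x)})ds)]\leq u_n(x)$, and since $u_n\geq 1$ (assumption \ref{ass:lowerboundun}) this yields
\begin{equation*}
\mathbb{E}\left[\exp\left(-\int_0^t \phi_n(X_s^{(x)})\,ds\right)\right] \leq u_n(x).
\end{equation*}

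Now I would pass to the $\liminf$ in $n$ on both sides. On the right this gives $u(x)$ by definition. On the left, Fatou's lemma applied to the nonnegative random variables $\exp(-\int_0^t \phi_n(X_s^{(x)})ds)$ gives $\mathbb{E}[\liminf_n \exp(-\int_0^t \phi_n ds)]\leq \liminf_n \mathbb{E}[\exp(-\int_0^t \phi_n ds)]$. Using that $\phi_n\leq A$ and the pointwise convergence $\phi_n(y)\to\DV(y)$ (Hypothesis \ref{ass:limitdefofV}), a reverse-Fatou argument applied to $A-\phi_n\geq 0$ along the path $s\mapsto X_s^{(x)}$ yields $\limsup_n \int_0^t \phi_n(X_s^{(x)})ds \leq \int_0^t \DV(X_s^{(x)})ds$, and therefore $\liminf_n \exp(-\int_0^t \phi_n\,ds)\geq \exp(-\int_0^t \DV\,ds)$. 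Combining these inequalities delivers $\mathbb{E}[\exp(-\int_0^t \DV(X_s^{(x)})ds)]\leq u(x)$, which together with the initial reduction proves \eqref{eq:expdecay}. Finally, \eqref{eq:fullderivativeestimatewithu} is obtained by squaring the estimate \eqref{eq:gradesthd} of Theorem \ref{prop:LOAChd} and inserting \eqref{eq:expdecay}.

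The main obstacle will be the justification of the martingale identity $\mathbb{E}[Z_t^n]=u_n(x)$: the functions $u_n$ need not be bounded and $\phi_n$ need not be bounded below on all of $\R^N$, so the localization and the subsequent Fatou-type passage to the limit $k\to\infty$ must be done with care, relying on the locally uniform bounds on $u_n$ from \ref{ass:upperboundforun} and the global upper bound $\phi_n\leq A$ from \eqref{eq:boundsforLun} to control the pre-limit quantities.
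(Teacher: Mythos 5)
Your argument is correct and the overall skeleton matches the paper's: reduce via \eqref{eq:Vgap} to bounding $\mathbb{E}\bigl[\exp(-\int_0^t\DV\,ds)\bigr]$ by $u(x)$; establish for each fixed $n$ the inequality $\mathbb{E}\bigl[u_n(X_t^{(x)})\exp(-\int_0^t\phi_n\,ds)\bigr]\leq u_n(x)$ with $\phi_n=\cL u_n/u_n$; drop $u_n\geq 1$; then pass $n\to\infty$ using Fatou, continuity of $\exp$, reverse Fatou (justified by the global upper bound $\phi_n\leq A$ from \eqref{eq:boundsforLun}), and the pointwise limit \eqref{eq:limitdefofV}. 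The one genuine difference is in how the intermediate inequality in $n$ is obtained. The paper identifies $\psi_n(x,t)=\mathbb{E}\bigl[u_n(X_t^{(x)})\exp(-\int_0^t\phi_n\,ds)\bigr]$ as the Feynman--Kac solution of the Cauchy problem \eqref{eq:FKeqn}, notes $u_n$ is a stationary solution of the same problem, and then appeals to a PDE uniqueness theorem from Karatzas--Shreve to conclude $\psi_n(x,t)=u_n(x)$ (with equality). You instead apply It\^o's formula to $Z_t^n=u_n(X_t^{(x)})\exp(-\int_0^t\phi_n\,ds)$ to see the drift cancels, making $Z^n$ a local martingale, localize via exit times $\tau_k$ (where \ref{ass:upperboundforun} and local lower-boundedness of $\phi_n$ ensure $Z^n_{t\wedge\tau_k}$ is bounded), obtain the exact identity $\mathbb{E}[Z^n_{t\wedge\tau_k}]=u_n(x)$, and then send $k\to\infty$ by Fatou to get the needed $\leq$. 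Your route has the advantage of handling head-on the fact that $u_n$ is in general unbounded and $\phi_n$ is not bounded below globally, so the hypotheses of a blanket uniqueness theorem for the Cauchy problem are not automatic; you correctly flag this as the delicate step and the localization argument resolves it. (One small inaccuracy: the uniform-in-$k$ lower bound $e^{-At}$ on the exponential is not actually needed for the Fatou passage $k\to\infty$, only the nonnegativity of $Z^n_{t\wedge\tau_k}$ is; but this does not affect the validity of the argument.)
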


\begin{proof}[Proof of Theorem \ref{thm:Lyapunovbound}]
Define
\begin{equation*}
\psi_n(x,t) = \mathbb{E}\left[u_n(X_t^{(x)}) \exp\left(-\int_0^t \frac{\cL u_n(X_s^{(x)})}{u_n(X_s^{(x)})} ds\right)\right].
\end{equation*}
By the Feynmann Kac formula, $\psi_n$ solves the initial value problem
\begin{equation}\label{eq:FKeqn}
\begin{cases}
&\frac{\partial \psi_n}{\partial t} = \cL \psi_n - \frac{\cL u_n}{u_n} \psi_n\\
&\psi_n(x,0)=u_n(x).
\end{cases}
\end{equation}
Note that $u_n$ is also a stationary solution to this PDE, indeed
\begin{align*}
    \cL u_n - \frac{\cL u_n}{u_n} u_n &=\cL u_n-\cL u_n=0.
\end{align*}
By \cite[Theorem 5.7.6]{KaratzasShreve} there is at most one solution to \eqref{eq:FKeqn} in the class $C^{1,2}(\R^N\times [0,T];\R)$ for each $T>0$ and hence we have $\psi_n(x,t)=u_n(x)$, that is
\begin{equation*}
u_n(x) = \mathbb{E}\left[u_n(X_t^{(x)}) \exp\left(-\int_0^t \frac{\cL u_n(X_s^{(x)})}{u_n(X_s^{(x)})} ds\right)\right].
\end{equation*} 
Using that $u_n\geq 1$ we have
\begin{equation*}
u_n(x) \geq \mathbb{E}\left[\exp\left(-\int_0^t \frac{\cL u_n(X_s^{(x)})}{u_n(X_s^{(x)})} ds\right)\right].
\end{equation*} 
By Fatou's lemma
\begin{align*}
u(x) = \liminf_{n\to\infty} u_n(x) &\geq  \mathbb{E}\left[\liminf_{n\to\infty}\exp\left(-\int_0^t \frac{\cL u_n(X_s^{(x)})}{u_n(X_s^{(x)})} ds\right)\right]
\end{align*}
Now using the continuity of the function $\exp$ we can exchange the $\liminf$ and $\exp$
\begin{align*}
u(x)\geq  \mathbb{E}\left[\exp\left(-\limsup_{n\to\infty}\int_0^t \frac{\cL u_n(X_s^{(x)})}{u_n(X_s^{(x)})} ds\right)\right]
\end{align*}
Again by reverse Fatou's lemma which is justified by \eqref{eq:boundsforLun}
\begin{align*}
u(x)&\geq  \mathbb{E}\left[\exp\left(-\int_0^t\limsup_{n\to\infty} \frac{\cL u_n(X_s^{(x)})}{u_n(X_s^{(x)})} ds\right)\right]\\
&=  \mathbb{E}\left[\exp\left(-\int_0^t \DV(X_s^{(x)}) ds\right)\right]
\end{align*}
here we have used \eqref{eq:limitdefofV} to justify the last line. Now using \eqref{eq:Vgap} we have
\begin{equation*}
u(x) \geq \mathbb{E}\left[\exp\left(-2\int_0^t \lambda(X_s^{(x)}) ds + 2\lambda_0t\right)\right].
\end{equation*} 
That is, 
\begin{equation*}
\mathbb{E}\left[\exp\left(-2\int_0^t \lambda(X_s^{(x)}) ds\right)\right] \leq u(x) e^{-2\lambda_0t}
\end{equation*}
as required.
\end{proof}

\section{Examples and Counterexamples}\label{sec6}

\begin{example}\label{ex:additivenoiseex}\textup{
Consider again the SDE \eqref{eq:SDEadditivenoise1dim}. 
If $b'(x)\leq -\lambda_0 <0$ for some constant $\lambda_0>0$ then one can deduce exponential decay of the derivatives of the semigroup from the results of \cite{CrisanOttobre}. Here we prove that the derivative estimates  \eqref{eq:fullderivativeestimate} hold also when $b'\leq 0$. More precisely, assuming $b(x)$ is unbounded (both above and below),  we show below the two following facts: i) if $b'(x)<0$ for every $x$ then \eqref{eq:fullderivativeestimate} holds for $V=V_1=\partial_x$; ii) if $b'(x)\leq 0$, then the same conclusion holds, provided Hypothesis \ref{hyp:Donskerhyp} is satisfied with some $\DV$ such that  $\DV(x)<0$ for all $x$ where $b'(x)=0$. 
  An example of a function $b(x)$ which falls in the case i) is $b(x)=\arctan(x)\log(2+x^2)$.}

\textup{
For equation \eqref{eq:SDEadditivenoise1dim} we have  $V_0(x)=b(x)\partial_x, V_1(x)=\partial_x$. The Local Obtuse Angle Condition \eqref{LOAC1d} is satisfied with $\lambda(x)=-b'(x)$, therefore by Theorem \ref{prop:OAC} \eqref{eq:gradest} holds. However since $b'$ is not necessarily uniformly bounded away from zero we do not immediately obtain \eqref{eq:fullderivativeestimate}; in order to obtain exponential decay we instead use the strategy of Section \ref{sec:DonskerVaradhan}. In Lemma \ref{lem:hypcheckforadditivenoiseex} we show that Hypothesis \ref{hyp:Donskerhyp} holds for \eqref{eq:SDEmain1d} when $b'(x)<0$ for all $x\in\R$.	By using Proposition \ref{prop:directfromDV}, in order to obtain \eqref{eq:fullderivativeestimate} it is then sufficient to show
	\begin{equation*}
	\lambda_0:=\inf_{\mu\in \mathcal{M}}\left[ I(\mu) -2\int_\R b' d\mu\right] >0,
	\end{equation*}
	where we recall that $I$ was given by \eqref{eq:Idef}. To prove the above suppose, for a contradiction, that $\lambda_0 =0$; then there exists some sequence of probability measures $\{\mu_k\}_{k\in\N}$ such that
	\begin{equation*}
	I(\mu_k) -2\int_\R b'(y) \mu_k(dy)\leq \frac{1}{k}
	\end{equation*}
	for every $k\in\N$. Now by Markov's inequality,
	\begin{equation*}
	\mu_k(\{x\in\R: A-\DV(x) >A+\ell\}) \leq \frac{A-\int_\R \DV d\mu_k}{A+\ell}
	\end{equation*}
	where $\DV$ and $A$ are as in Hypothesis \ref{hyp:Donskerhyp}, so that $\DV(x)\leq A$ for all $x\in\R$ and Markov's inequality is applicable. By the definition of $I$ and Fatou's lemma we have
	\begin{equation*}
	I(\mu_k) \geq \liminf_{n\to\infty}\int_\R -\frac{\cL u_n}{u_n} d\mu_k  \geq \int_\R -\DV d\mu_k.
	\end{equation*}
	This gives
	\begin{equation*}
	\mu_k\{x\in\R: \DV(x) \leq -\ell\} \leq \frac{A+I(\mu_k)}{A+\ell},
	\end{equation*}
	which implies that $\{\mu_k\}$ is tight since $\{x\in\R: \DV(x)\leq -\ell\}$ is compact for all $\ell$. By Prokhorov's theorem we may take a weakly convergent subsequence; let $\mu$ denote the limit of such a subsequence. Then
	\begin{equation}\label{eq:derivofbiszero}
	\int b'(y) \mu(dy) =0.
	\end{equation}
	However $b'<0$ so we have a contradiction. This proves that \eqref{eq:fullderivativeestimate} holds for the SDE \eqref{eq:SDEadditivenoise1dim}.}
	
	\textup{
	By following the same reasoning as in the above, we can also consider the case when $b'\leq 0$, provided Hypothesis \ref{hyp:Donskerhyp} holds for some $\DV$ such that  $\DV(x)<0$ for all $x$ where $b'(x)=0$. Indeed by \eqref{eq:derivofbiszero} we must have that $\mu(\{x:b'<0\})=0$. Therefore if $\DV(x)<0$ whenever $b'=0$ then we have
	\begin{equation*}
	0 = I(\mu)-2\mu(b') \geq \int_{\R} (2b'(y)-\DV(y)) d\mu(y) =  \int_{b'=0} -\DV d\mu>0
	\end{equation*} 
	which gives again a contradiction. }
	\hf
\end{example}

\begin{example}\label{ex:bump}\textup{
	Consider the SDE
	\begin{equation}\label{eq:bump}
	dX_t=(2\arctan(X_t-5)-X_t)dt+\sqrt{2}dB_t.
	\end{equation}
	For this example we will show that \eqref{eq:fullderivativeestimatewithu} holds. Indeed we have $V_0=(2\arctan(x-5)-x)\partial_x$, $V_1=\partial_x$, and then \eqref{eq:LOACforV1} is satisfied with 
	\begin{equation*}
	\lambda(x) = 1-\frac{2}{1+(x-5)^2}.
	\end{equation*}
	Now we may apply Theorem \ref{prop:OAC} and  see that \eqref{eq:fullderivativeestimatewithu} holds provided \eqref{eq:expdecay} does too. To show \eqref{eq:expdecay} we shall use Theorem \ref{thm:Lyapunovbound}. Note that Hypothesis \ref{hyp:upperbound} is satisfied by Lemma \ref{lem:hypcheckforadditivenoiseex}.}
	\begin{figure}
	\centering
	\includegraphics[width=0.7\linewidth]{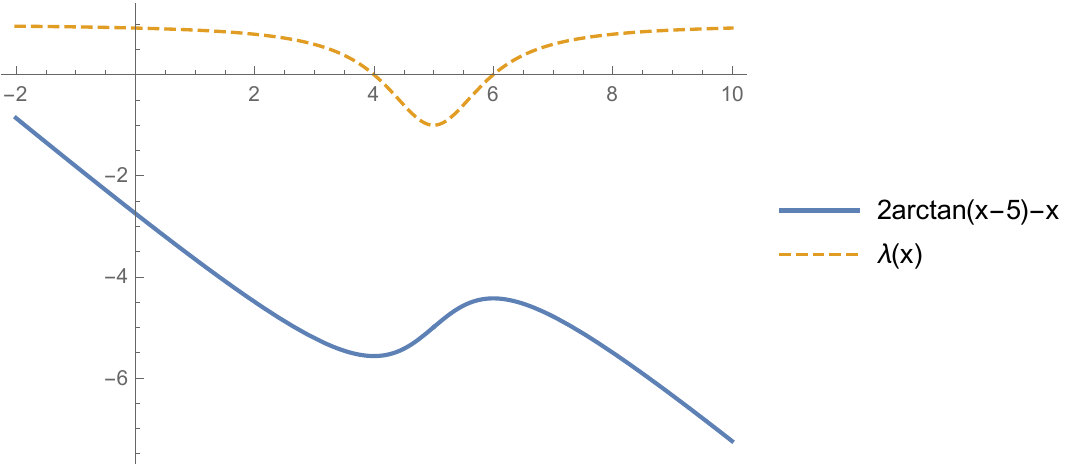}
	\caption{A plot of $V_0(x)$ and $\lambda(x)$ for the SDE \eqref{eq:bump}, see Example \ref{ex:bump}.}
	\label{fig:bplotwithnegbump}
	\end{figure}
	
	\textup{We emphasize that in this example  the function $\lambda$  is bounded below by $-1$ and does take negative values. In Figure \ref{fig:bplotwithnegbump} we plot both $V_0(x)$ and $\lambda(x)$. By Lemma \ref{lem:hypcheckforadditivenoiseex} we have that Hypothesis \ref{hyp:upperbound} is satisfied with $\DV=0.25+0.5(2\arctan(x-5)-x)\tanh(0.5x)$. Then by Theorem \ref{thm:Lyapunovbound} we have that \eqref{eq:fullderivativeestimate} follows provided we can find a $\lambda_0>0$ satisfying \eqref{eq:Vgap}.
	From Figure \ref{fig:vgapwithnegbump} we can see there is a constant $\lambda_0>0$ such that \eqref{eq:Vgap} holds for all $x\in\R$, hence by Theorem \ref{prop:OAC} and Theorem \ref{thm:Lyapunovbound} we have
	\begin{equation*}
	\lvert \partial_x \cP_tf(x) \rvert \leq \cosh(0.5x)e^{-\lambda_0t} \lVert \partial_xf \rVert_\infty.
	\end{equation*}
	The above has been obtained by taking $u(x)=\cosh(0.5x)$,  which we are allowed to do thanks to the proof of Lemma \ref{lem:hypcheckforadditivenoiseex} (with $\alpha=0.5$).}
	\begin{figure}
	\centering
	\includegraphics[width=0.7\linewidth]{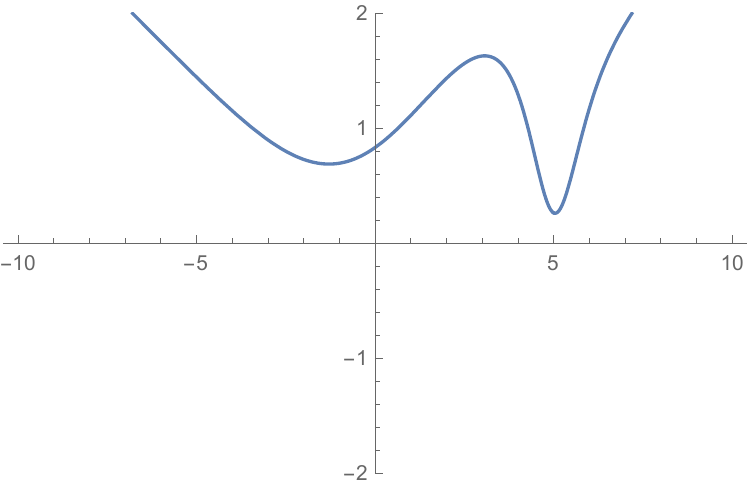}
	\caption{A plot of $2\lambda(x)-\DV(x)$ for the SDE \eqref{eq:bump}, see Example \ref{ex:bump}.}
	\label{fig:vgapwithnegbump}
	\end{figure}
	
	\end{example}

\begin{example}\label{ex:arctan}\textup{
Here we continue Example \ref{ex:arctanprelim}, i.e. we  consider again the SDE \eqref{eq:arctanSDE}. Our goal is to show that the weak error of the Euler approximation of \eqref{eq:arctanSDE} converges to zero uniformly in time; that is, we want to show that \eqref{eq:uniformweakconv} holds. We notice in passing that  this is the case despite the fact that the SDE  \eqref{eq:arctanSDE} does not satisfy the Lyapunov conditions \eqref{eq:Lyapcond} (and it does not satisfy  \eqref{eq:expLyap} for any confining polynomial function $G$, see Note \ref{note:Lyapunov}).}
\textup{To show \eqref{eq:uniformweakconv}, by Theorem \ref{thm:globalerror} it is sufficient to check that Hypothesis \ref{hyp:euler} holds. It is immediate to see that Hypothesis \ref{hyp:euler} \ref{ass:ellipiticity} and \ref{ass:lipschitz} are satisfied.  Hypothesis \ref{hyp:euler} \ref{ass:gradest} is satisfied as well thanks to Lemma \ref{lem:higherorderOACforadditivenoise}. Let us come to explain why this is the case. In the case at hand the only assumption of Lemma \ref{lem:higherorderOACforadditivenoise} which is non-trivial to check is the inequality \eqref{eq:expdecayintermsofb}. Notice that \eqref{eq:expdecayintermsofb} is just \eqref{eq:expdecay} with $\lambda(x)=-b'(x)$, $b(x)$ being the drift in \eqref{eq:arctanSDE}. Therefore, to obtain \eqref{eq:expdecayintermsofb}, we use Theorem \ref{thm:Lyapunovbound}. In turn, to apply Theorem \ref{thm:Lyapunovbound}, we must verify that Hypothesis \ref{hyp:upperbound} holds. This is done in Lemma \ref{lem:hypcheckforadditivenoiseex}, where we show that \eqref{eq:arctanSDE} satisfies Hypothesis \ref{hyp:upperbound} with 
	\begin{equation*}
	\DV(x) = \frac{1}{4}-\frac{1}{2}\arctan(x)\tanh\left(\frac{x}{2}\right).
	\end{equation*}
	From the proof of Lemma \ref{lem:hypcheckforadditivenoiseex} one can moreover see that \eqref{eq:expdecayintermsofb} holds with  $u(x)=\cosh(x/2)$. In Figure \ref{fig:vgapplot} we can see there is a constant\footnote{One can find numerically that $C$ is about $0.267$.} $\lambda_0>0$ such that $2\lambda(x)-\DV(x)\geq 2\lambda_0$ for all $x\in\R$, hence by Theorem \ref{thm:Lyapunovbound} we have 
	\begin{equation*}
	    \mathbb{E}\left[\exp\left(-2\int_0^t \frac{1}{1+(X_s)^2}ds\right)\right] \leq \cosh(x/2)e^{-2\lambda_0t}.
	\end{equation*}
To summarise, Hypothesis \ref{hyp:euler} \ref{ass:gradest} is satisfied with $u(x)=\cosh(x/2)$. As shown in Note \ref{note: applicability},  because the coefficients of this SDE are bounded and have bounded derivatives, verifying Hypothesis \ref{hyp:euler} \ref{hypmom} reduces to showing \eqref{eq:uboundassumption}; this follows from Lemma \ref{lem:expmomentsforarctan}. Finally Hypothesis \ref{hyp:euler} is verified for this example.
	\begin{figure}
		\centering
		\includegraphics[width=0.7\linewidth]{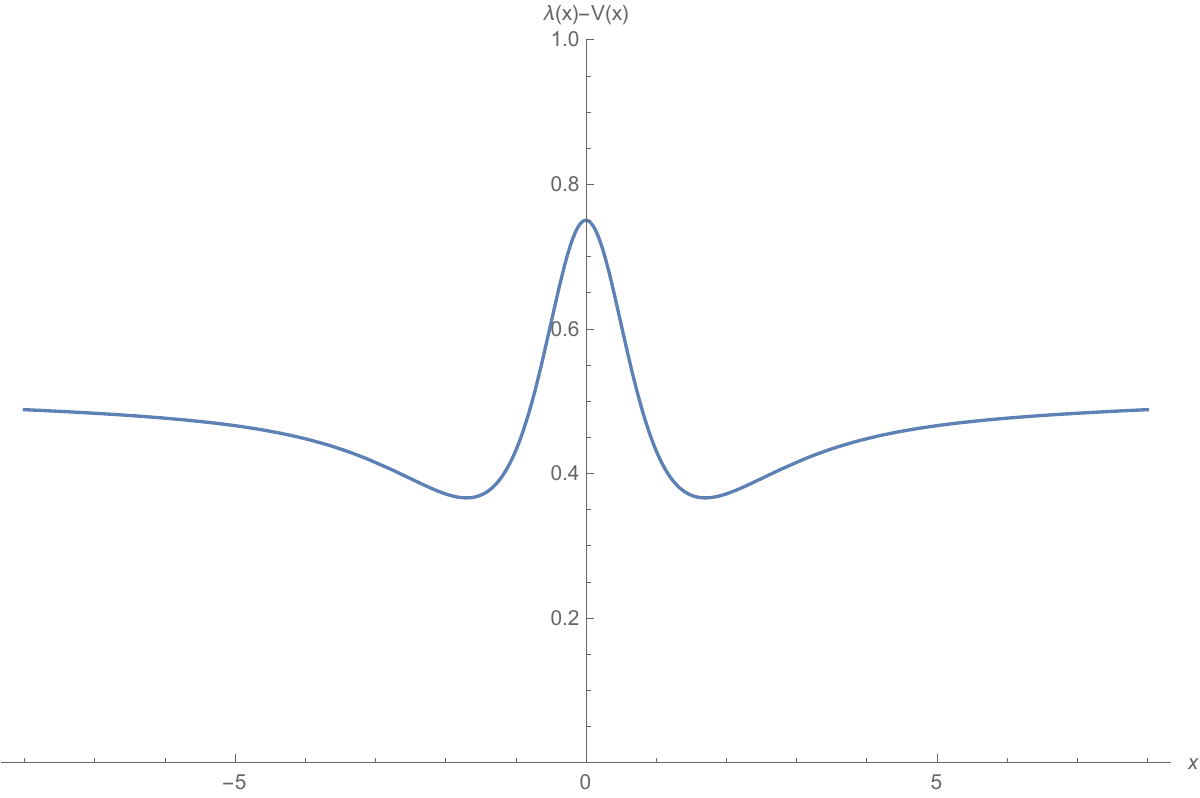}
		\caption{A plot of $2\lambda(x)-\DV(x)$ for the SDE \eqref{eq:arctanSDE}, see Example \ref{ex:arctan}.}
		\label{fig:vgapplot}
	\end{figure}
}
	
\textup{	Note that another consequence of \eqref{eq:expdecayuptoorder4nonuniform} is that the SDE \eqref{eq:arctanSDE} decays to equilibrium exponentially fast. One can check directly that \eqref{eq:arctanSDE} admits an invariant measure and such an invariant measure has a density with respect to the Lebesgue measure on $\R$ given by
	\begin{equation*}
	    \mu(x) = \frac{1}{Z} \sqrt{1+x^2}e^{-x\arctan(x)},
	\end{equation*}
	where $Z$ is a normalising constant. Then for $f\in C_b^1(\R)$ we have
	\begin{align*}
	    \left\lvert\cP_tf(x) -\int_\R f(y)\mu(y)dy\right\rvert &= \left\lvert\cP_tf(x) -\int_\R \cP_tf(y)\mu(y)dy \right\rvert\\
	    &=\left\lvert\int_\R(\cP_tf(x) - \cP_tf(y))\mu(y)dy\right\rvert\\
	    &=\left\lvert\int_\R \int_{x}^y \partial_z\cP_tf(z) dz \mu(y)dy\right\rvert\\
	    &\leq \lVert f'\rVert_\infty e^{-\lambda_0 t} \int_\R \int_{x}^y \cosh(z/2) dz \mu(y)dy\\
	    &\leq K(x)\lVert f'\rVert_\infty e^{-\lambda_0t} .
	\end{align*}
	Here
	$$
	K(x) = \int_\R \int_{x}^y \cosh(z/2) dz \mu(y)dy
	$$
	which is finite for all $x\in \R$.}\hf
\end{example}

Now we consider an example which does not satisfy Hypothesis \ref{hyp:euler} and we can see numerically that for this example the conclusion of Theorem \ref{thm:globalerror} does not hold.

\begin{example}\label{ex:Grusin}
\textup{Consider the two dimensional SDE
\begin{equation}\label{eq:Grusin}
    \begin{cases}
    dX_t^1&=X_t^1dt\\
    dX_t^2&=\sqrt{2}X_t^1\circ dB_t.
    \end{cases}
\end{equation}
Here $V_0= x^1\pa_{x^1}$, $V_1=x^1 \pa_{x^2}$. It is shown in \cite[Example 6.9]{CCDO} that for this example the Obtuse Angle Condition \eqref{OAC1d} is satisfied (by $V=V_1$) with $\lambda=1$ and therefore the derivatives of the semigroup decay exponentially fast. However  the moment  bounds \eqref{eq:uboundinexp}-\eqref{eq:uboundinexp3} on the Euler approximation of \eqref{eq:Grusin} do not hold true. Indeed,  both the second component of the stochastic process $X_t^2$ and the second component of the Euler approximation $Y_{t_{n(t)}}^{\delta,2}$ are distributed according to a Gaussian random variable and one can show
\begin{align*}
    \mathrm{Var}(X_t^2) &= e^{2t}-1\\
    \mathrm{Var}(Y_{t_{n(t)}}^{\delta,2}) &= 2\frac{(1 + \delta)^{2 n} - 1}{2 + \delta}.
\end{align*}}

\textup{Moreover, in Figure \ref{fig:differenceinvar} we see that as $t$ tends to $\infty$ the difference between the variance of $X_t^2$ and $Y_{t_{n(t)}}^{\delta,2}$ diverges. In particular, this implies that the Euler Approximation does not weakly converge uniformly in time.}

\textup{\begin{figure}
    \centering
    \includegraphics[width=0.8\linewidth]{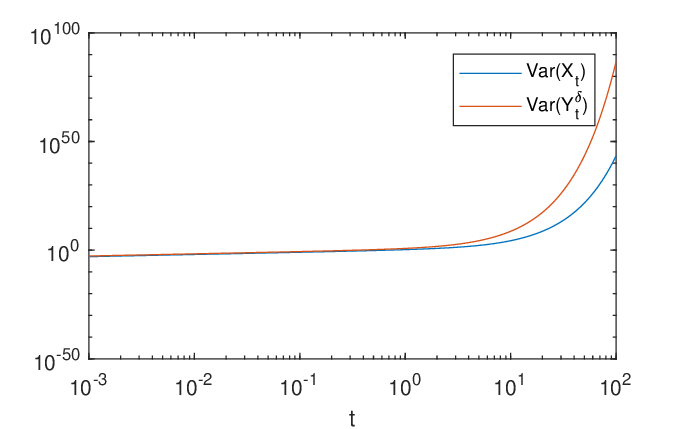}
    \caption{This figure is a plot of $\mathrm{Var}(X_t^2)$ and $\mathrm{Var}(Y_{t_{n(t)}}^{\delta,2})$ when $\delta=10^{-3}$, for the SDE \eqref{eq:Grusin}.}
    \label{fig:differenceinvar}
\end{figure}}
\hf
\end{example}

\begin{remark}\label{note:Lyapunov}
\textup{Here we make some comments on the relation between i) decay of derivatives of the semigroup (i.e. estimates of the type \eqref{eq:expdecayuptoorder4nonuniform} or \eqref{expdecayintro}); ii) uniform in time moment bounds for the Euler approximation (i.e. bounds of the type \eqref{eq:uboundinexp}-\eqref{eq:uboundinexp3}); iii) uniform in time convergence of the Euler approximation (i.e. \eqref{stronguniform}); and Lyapunov conditions of the type \eqref{eq:Lyapcond} or \eqref{eq:expLyap}. 
\begin{itemize}
    \item In \cite{Mattingly} the authors show that Lyapunov conditions of the type \eqref{eq:expLyap} are not robust under discretization,  and indeed ergodicity may be lost after discretising, see \cite[Section 6]{Mattingly} and references therein for a complete discussion. In Example \ref{ex:xcubed} we exhibit a simple one dimensional SDE (similar to the one presented in \cite[Section 6]{Mattingly}) which does satisfy  \eqref{eq:expLyap} and the property i); however the bounds ii) and the uniform weak convergence iii) only hold if the step-size is chosen to be small enough (the ``smallness" is determined by the size of the initial datum). 
    \item In \cite[Section 3.1]{TalayTubaro} the authors show that, in the case of elliptic SDEs, the Lyapunov condition \eqref{eq:Lyapcond} implies the bounds ii). Here we exhibit an example, Example \ref{ex:circlewithconfinement}, where \eqref{eq:Lyapcond} is satisfied and (despite the fact that the noise is degenerate) ii) does hold. However \eqref{stronguniform} does not. 
    \item Finally, the SDE in Example \ref{ex:arctan} does not satisfy \eqref{eq:Lyapcond} and it does not satisfy \eqref{eq:expLyap} for any confining \footnote{The function $G$ is said to be confining if $G(x) \rightarrow \infty$ when $\lv x \rv \rightarrow \infty$. } function $G$ with polynomial growth; however Theorem \ref{thm:globalerror} applies to such a dynamics and therefore \eqref{stronguniform} does hold.  We note that while \eqref{eq:expLyap} does not hold for any even polynomial function $G$, it does hold for $G= \cosh(x)$, see \eqref{eq:Lyapunovcondforarctan}.
\end{itemize}
}
\end{remark}

\begin{example}\label{ex:xcubed}
\textup{Consider the one-dimensional SDE
\begin{equation}\label{eqn:xcubed}
dX_t = (-X_t^3-X_t) dt + \sqrt{2} dB_t.
\end{equation}
Let us start by observing that the function $G(x)=1+x^2$ is a Lyapunov function for such an SDE in the sense that, if $\cL$ is the generator of \eqref{eqn:xcubed}, then one has 
\begin{equation}\label{eq:expLyap}
(\cL G) (x)\leq - c G(x)+ d
\end{equation}
for some $c,d>0$ (with a calculation completely analogous to the one in \cite[equation (6.9)]{Mattingly}). }
\textup{Moreover, a straightforward calculation shows that the second moment of $X_t$ is bounded uniformly in time, i.e.
$$
\mathbb{E}\lv X_t\rv^2 \leq C, 
$$
for some constant $C$ independent of time. However the same is not true for the corresponding Euler approximation. More precisely, 
by following the same argument as in the proof of \cite[Lemma 6.3]{Mattingly} one can show the following: 
\be\label{eq:Mattinglysecondmoment}
\mbox{if } \, \, \mathbb{E}[(Y_0^\delta)]^2\geq 4+4/\delta^2 \mbox{ then } \mathbb{E}[(Y_{t_n}^\delta)^2] \rightarrow \infty \mbox{ as } n\rightarrow\infty \,.
\ee
In other words, if we fix a large initial datum then, in order for the second moment to stay bounded we need to choose a sufficiently small step-size. Therefore, if the initial datum is not small enough, bounds of the type \eqref{eq:uboundinexp}-\eqref{eq:uboundassumption} cannot hold.
One can also show with a slightly lengthy but simple calculation\footnote{This calculation follows the scheme outlined in Note \ref{compBakry}, i.e. define the function $\Gamma(f):= \sum_{k=1}^4\lv V_1^{(k)}f\rv^2$ and then prove that \eqref{eq:strategystandard} holds for such a function.}
 that  the derivatives (up to order four) of the semigroup generated by \eqref{eqn:xcubed} decay exponentially fast, namely
$$
\sum_{k=1}^4\lv V_1^{(k)}\cP_tf(x)\rv^2 \leq c e^{-ct},
$$
for some constant $c>0$. We emphasize that here $V_1=\pa_x$ so the above estimates are actually derivative estimates in the coordinate direction.  The plots in Figure \ref{fig:xcubedconvergent} and Figure \ref{fig:xcubednonconvergent} then show that, for a fixed initial datum, if the step-size $\delta$ is small enough then \eqref{stronguniform} holds, otherwise it doesn't (coherently with \eqref{eq:Mattinglysecondmoment}).}

\begin{figure}
    \centering
    \includegraphics[width=0.8\linewidth]{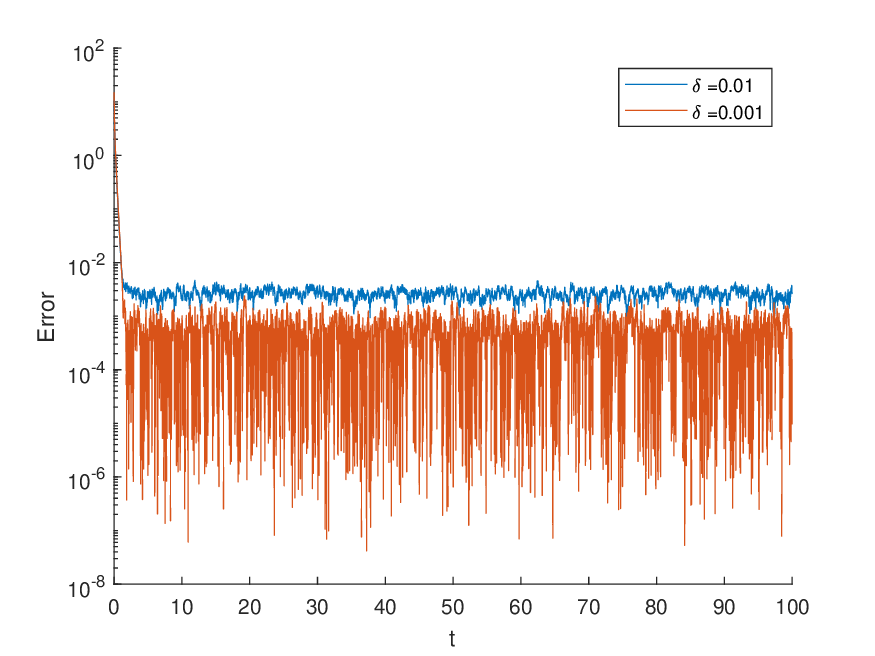}
    \caption{This figure is a plot of the error between $\mathbb{E}[\lvert Y_{t}^\delta\rvert^2]$ and $\int y^2 \mu(dy)$ for the SDE \eqref{eqn:xcubed}. Here we averaged over $10^6$ simulations to estimate the expectation and used the initial condition $X_0=Y_0^\delta=4$. Here $\mu$ is the unique invariant measure of \eqref{eqn:xcubed}. See Example \ref{ex:xcubed} for details.}
    \label{fig:xcubedconvergent}
\end{figure}
\begin{figure}
    \centering
    \includegraphics[width=0.8\linewidth]{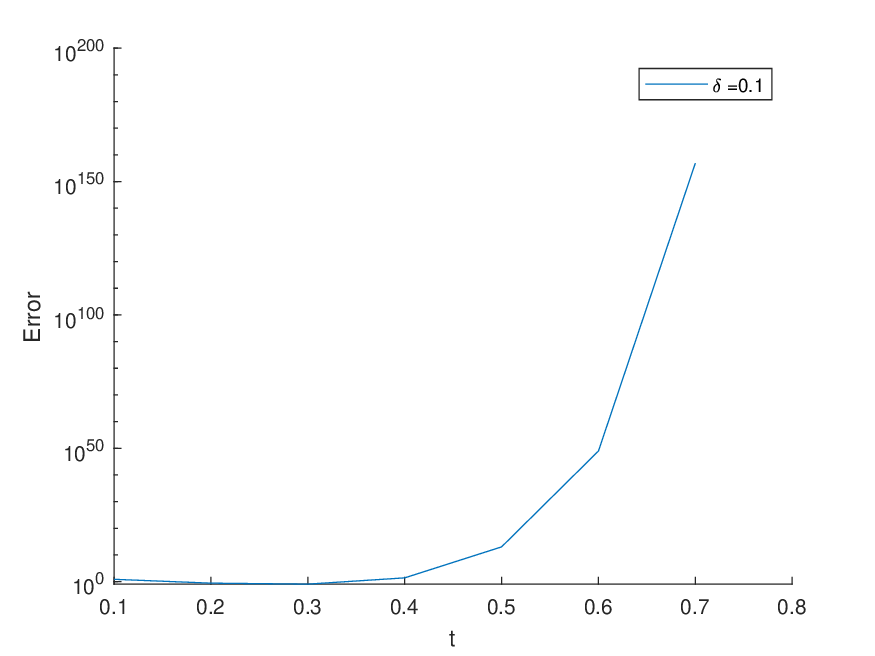}
    \caption{This figure is a plot of the error between $\mathbb{E}[\lvert Y_{t}^\delta\rvert^2]$ and $\int y^2 \mu(dy)$ for the SDE \eqref{eqn:xcubed}. Here we averaged over $10^6$ simulations to estimate the expectation and used the initial condition $X_0=Y_0^\delta=4$. Here $\mu$ is the unique invariant measure of \eqref{eqn:xcubed}. See Example \ref{ex:xcubed} for details.}
    \label{fig:xcubednonconvergent}
\end{figure}
\hf
\end{example}

\begin{example}\label{ex:circlewithconfinement}
	\textup{Consider the two dimensional ODE
	\begin{equation}\label{eq:circlewithconfinement}
	\begin{cases}
	\frac{d}{dt}X_t^1 = \left( -X_t^2 +\Psi(X_t)X_t^1\right) dt\\
	\frac{d}{dt}X_t^2 =\left(  X_t^1 +\Psi(X_t)X_t^2\right) dt
	\end{cases}
	\end{equation}
	where $\Psi:\R^2\to\R$ is a smooth bounded function such that $\Psi(x)=0$ if $\lvert x\rvert<2$ and $\Psi(x)=-1$ if $\lvert x\rvert>3$.  This dynamics provides an example where  Hypothesis \ref{hyp:euler} \ref{hypmom} is satisfied (at least when $\delta<1$) while Hypothesis \ref{hyp:euler}   \ref{ass:gradest} is not; moreover, the conclusion \eqref{eq:uniformweakconv} of Theorem \ref{thm:globalerror} does not hold, i.e. in this case the weak error of the Euler approximation does not converge to zero uniformly in time. At the end of this example we will also add (degenerate) noise to the above dynamics and show that the same reasoning still applies, see below. Before moving on to looking at this example in more detail, we would also like to emphasize that the dynamics \eqref{eq:circlewithconfinement} does satisfy a Lyapunov-type condition; indeed, if $b(x)$ is the drift of the equation, then outside of the ball of radius three one has 
	\be\label{eq:Lyapcond}
	x \cdot b(x) \leq - \left \vert x \right \vert^2 \,.
	\ee
	}
	
	\textup{To see that Hypothesis \ref{hyp:euler} \ref{ass:gradest} does not hold, fix some $x=(x^1, x^2)$ with $\lvert x \rvert <2$; then we may solve \eqref{eq:circlewithconfinement} to find
	\begin{align*}
	X_t^1&= x^1 \cos(t)-x^2\sin(t)\\
	X_t^2&= x^1\sin(t)+x^2\cos(t).
	\end{align*}
	For $f\in C_b^\infty(\R^2)$ we then have
	\begin{align*}
	\partial_1\cP_tf(x) = \cos(t)(\partial_1f)(X_t)+\sin(t)(\partial_2f)(X_t).
	\end{align*}
From the right hand side of the above expression we see that $\partial_1\cP_tf(x)$ will not converge to zero as $t$ tends to $\infty$ for all $f\in C_b^\infty(\R^2)$.}

\textup{In order to prove that Hypothesis \ref{hyp:euler} \ref{hypmom} holds, we shall show that $\lvert Y_{t_n}^\delta\rvert^2$ is bounded independently of $n$. Let $R_n:=\lvert Y_{t_n}^\delta\rvert^2$, note that $R_n$ satisfies the recurrence relation
\begin{equation}\label{eq:circleradiusreccurance}
R_{n+1} = ((1+\Psi(Y_{t_n}^\delta)\delta)^2+\delta^2)R_n.
\end{equation}
Suppose for some $n$ that $R_n>3$, in which case $\Psi(Y_{t_n}^\delta)=-1$ and \eqref{eq:circleradiusreccurance} can be rewritten as
\begin{equation*}
R_{n+1} = (1-2\delta+2\delta^2)R_n.
\end{equation*}
Therefore, provided $\delta<1$,  we see that $R_{n+1}<R_n$ which implies that $R_n$ is bounded independent of $n$.}

\textup{From \eqref{eq:circleradiusreccurance} we also see that if we take the initial condition to be $x=(1,0)$ then $X_t$ will remain on the circle of radius $1$ whereas $R_n$ will increase towards 2 and then remain in a small region around 2 from then on. Hence 
\begin{equation*}
\sup_{n}( \lvert Y_{t_n}^{\delta}\rvert^2-\lvert X_{t_n}\rvert^2) >1.
\end{equation*}
This is also demonstrated in Figure \ref{fig:circleconfinement} for three choices of $\delta$ (for this figure we took $\Psi\in C_b^4(\R^2)$ as described in \eqref{eq:circlewithconfinement} and defined by a polynomial interpolation for $2<\lvert x\rvert<3$).}

\textup{Let us now add noise to the  ODE \eqref{eq:circlewithconfinement} and consider the system
\begin{equation}\label{eq:circlewithconfinementnoise}
	\begin{cases}
	\frac{d}{dt}X_t^1 = \left( -X_t^2 +\Phi(X_t)X_t^1\right) dt + \mathbf{1}_{\left\vert X_t \right \vert> 3} dW_t^1\\
	\frac{d}{dt}X_t^2 =\left(  X_t^1 +\Phi(X_t)X_t^2\right) dt+\mathbf{1}_{\left\vert X_t \right \vert> 3} dW_t^2 \, ,
	\end{cases}
	\end{equation}
where $W_t^1, W_t^2$ are one-dimensional independent Brownian motions. Then again the space derivatives of the semigroup do not decay to zero and the Euler approximation remains bounded, but it will not approximate the SDE uniformly in time, see Figure \ref{fig:circleconfinementnoise}. }

\textup{\begin{figure}
    \centering
    \includegraphics[width=0.8\linewidth]{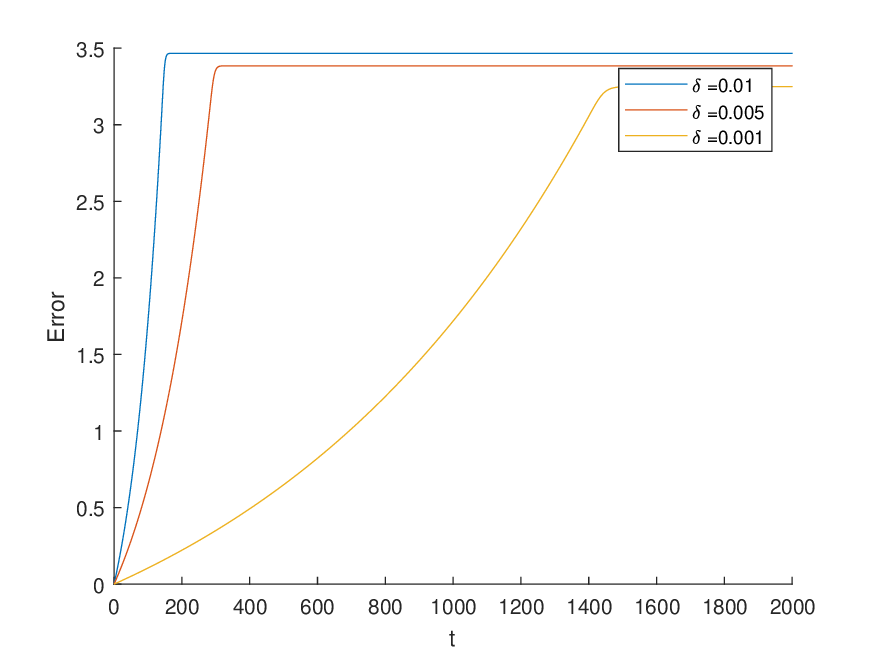}
    \caption{This figure is a plot of the error between $\lvert X_t\rvert^2$ and  $\lvert Y_t^\delta\rvert^2$, where $X_t$ is the solution of the ODE \eqref{eq:circlewithconfinement} and $Y_t^\delta$ its Euler approximation, for various choices of the step-size $\delta$. As $\delta$ tends to zero the error does not tend to zero, hence $\eqref{stronguniform}$ cannot hold.}
    \label{fig:circleconfinement}
\end{figure}}

\textup{\begin{figure}
    \centering
    \includegraphics[width=0.8\linewidth]{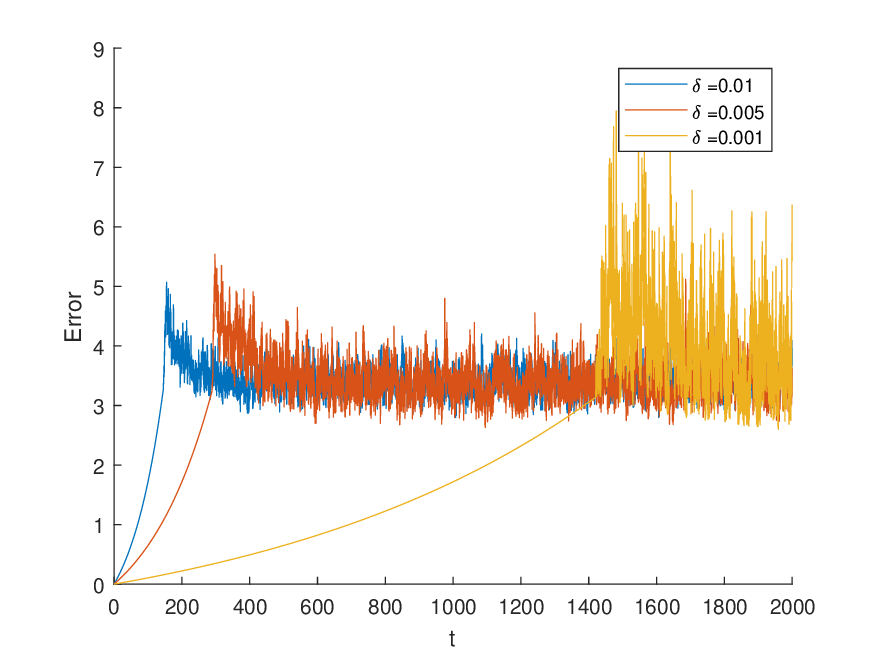}
    \caption{This figure is a plot of the error between $\mathbb{E}[\lvert X_t\rvert^2]$ and $\mathbb{E}[\lvert Y_t^\delta\rvert^2]$ for the SDE \eqref{eq:circlewithconfinementnoise}. Here we averaged over $10000$ simulations to estimate the expectation and used the initial condition $X_0=Y_0^\delta=(1,0)$. Similarly to what happens in Figure \ref{fig:circleconfinement}, as $\delta$ tends to zero the error is not tending to zero, hence \eqref{stronguniform} cannot hold. }
    \label{fig:circleconfinementnoise}
\end{figure}}
\hf
\end{example}

{\bf Acknowledgments}. P. Dobson was supported by the Maxwell Institute Graduate School in Analysis and its
Applications (MIGSAA), a Centre for Doctoral Training funded by the UK Engineering and Physical
Sciences Research Council (grant EP/L016508/01), the Scottish Funding Council, Heriot--Watt
University and the University of Edinburgh. The work of D. Crisan has been partially supported by a UC3M-Santander Chair of Excellence grant held at the Universidad Carlos III de Madrid.

\appendix
\numberwithin{equation}{section}

\section{UFG condition}\label{app:UFG}

Here we briefly gather some background material on the UFG condition, which was first introduced by Kusuoka and Stroock in \cite{{KusStr82},{KusStr85},{KusStr87}, Kus03} and later further studied by Crisan and collaborators in \cite{{CrisanLitterer}, {CrisanDelarue}, Crisan}, in particular they show that the UFG condition is a sufficient condition to ensure the semigroup $\cP_tf$ is smooth in the directions $V_{[\alpha]}$, which are defined below.

Fix $d \in \N$ and let $\A$ be the set of all $k$-tuples, of any size $k \geq 1$,  of integers of the following form 
$$
\A:= \{\alpha=(\alpha^1 \dd \alpha^{k}), k \in \N : \alpha^j\in \{0, 1 \dd d\} {\mbox{ for all } j \geq  1} \}\setminus \{(0)\}\,.
$$
We emphasise that all $k$-tuples of any length $k\geq 1$ are allowed in $\A$,  except the trivial one, $\alpha=(0)$ (however singletons $\alpha=(j)$ belongs to $\A$ if $j\in\{1\dd d\}$). 
We endow $\A$ with the product operation
$$
\alpha \ast \beta:= (\alpha^1 \dd \alpha^h, \beta^1 \dd \beta^{\ell}), 
$$
for any $\alpha=(\alpha^1 \dd \alpha^h)$ and 
$\beta=(\beta^1 \dd \beta^{\ell})$ in $\A$.  If  $\alpha\in\A$, we define the {\em length} of $\alpha$, denoted by $\|\alpha\|$, to be the integer
$$
\|\alpha\|:= h+\mbox{card}\{i: \alpha_i=0\} , \qquad \mbox{if } \alpha=(\alpha^1 \dd \alpha^h) \,.
$$
For any $m \in \N, m\geq 1$, we then  introduce the sets
\begin{align*}
&\A_m=\{\alpha \in \A: \| \alpha\|\leq m   \} \,.
\end{align*}

Let $\{V_i: i=0 \dd d \}$ be a collection of vector fields on $\R^N$ and let us define the following ``hierarchy" of operators:
\begin{align*}
V_{[i]} &:= V_i \qquad i=0, 1 \dd d\\
V_{[\alpha \ast i]} & := [V_{[\alpha]}, V_{[i]}], \qquad \alpha \in \A, i=0,1 \dd d\,.
\end{align*}
Note that if $\| \alpha\|=h$ then $\| \alpha \ast i \| = h+1$ if $i \in \{1 \dd d\}$ and $\| \alpha \ast i\|= h+2$ if $i=0$. 
Let $C^{\infty}_V(\R^N)$ denote the set  of  bounded smooth functions,  $\varphi: \R^N \rightarrow \R$,   such that
\begin{equation*}\label{supphi}
\sup_{x \in \R^N}\lv V_{[\gamma_{1}]} \dots V_{[\gamma_{k}]} \varphi \rv < \infty
\end{equation*}
for all $k$ and all $\gamma_{1} \dd \gamma_{k} \in \A_m$. 
With this notation in place we can now introduce the UFG condition.

\begin{definition}[UFG Condition]\label{defufg}
Let $\{V_i: i=0 \dd d \}$ be a collection of 
smooth vector fields on $\R^N$ and assume that the coefficients of such vector fields have bounded partial derivatives (of any order). We say that the vector fields  $\{V_i: i=0 \dd d \}$ satisfy the UFG condition if there exists $m\in\N$ such that for any $\alpha \in \A$ of the form 
$$
\alpha = \alpha'\ast i, \qquad    \alpha' \in \A_m, \, i \in \{0 \dd d\}, $$ 
 one can find  bounded smooth functions $\varphi_{\alpha, \beta}=\varphi_{\alpha, \beta}(x) \in C^{\infty}_V(\R^N)$ such that
\begin{equation*}\label{eq:UFG}
V_{[\alpha]}(x) = \sum_{\beta \in \A_m} \varphi_{\alpha,\beta}(x) V_{[\beta]} (x) \,.
\end{equation*}
\end{definition}

For our purposes, it is important to stress that any ellipitic process satisfies the UFG condition and analogously, any uniformly hypoellipitic processes is of UFG type as well, see \cite{CCDO}. We can define a version of the Obtuse Angle Condition for UFG processes. Indeed in \cite{CrisanOttobre} it is shown that if the \emph{Obtuse Angle Condition} is satisfied by all the vectors $V_{[\alpha]}$, i.e. if
\begin{equation}\label{eq:OACintro}
([V_{[\alpha]},V_0]f)(V_{[\alpha]}f) \leq -\lambda_0 \lvert V_{[\alpha]}f\rvert^2, \quad \forall \alpha\in\A_m, f \text{ sufficiently smooth} 
\end{equation}
then
\begin{equation*}
    \lvert V_{[\alpha]}\cP_tf(x)\rvert \leq C e^{-\overline{\lambda}t} \sum_{\beta\in\A_m} \lVert V_{[\beta]} f\rVert_\infty
\end{equation*}
for some positive constants $\overline{\lambda},C$ and for any $f$ sufficiently smooth, $\alpha\in\A_m$ and $x\in \R^N$, see \cite{CrisanOttobre} for details.

 Observe that we can equivalently\footnote{Note that we can write \eqref{eq:OACintro} as
\begin{equation*}
    \nabla f(x)^T [V_{[\alpha]},V_0](x) V_{[\alpha]}(x)^T\nabla f(x) \leq -\lambda_0 \lvert \nabla f(x)^T V_{[\alpha]}(x)\rvert^2, \quad \forall \alpha\in\A_m, f \text{ sufficiently smooth}.
\end{equation*}
Fix $x,\xi\in\R^N$ then by taking $f\in C_c^\infty(\R^N)$ with $f(y)=y^T\xi$ in some neighbourhood of $x$ we obtain \eqref{eq:OACvector}.
}
 express \eqref{eq:OACintro} as 
\begin{equation}\label{eq:OACvector}
    \xi^T [V_{[\alpha]},V_0](x) V_{[\alpha]}(x)^T\xi \leq -\lambda_0 \lvert \xi^T V_{[\alpha]}(x)\rvert^2, \quad \forall x,\xi\in\R^n, \alpha\in\A_m.
\end{equation}

At this level of generality, the Local Obtuse Angle Condition \eqref{LOAC1d} which we previously stated just for the case $d=N=1$, becomes the requirement that there is some measurable function $\lambda:\R^N \to \R$ such that for all $f$ sufficiently smooth
\begin{equation}\label{eq:LOAC}
    ([V_{[\alpha]},V_0]f)(x)(V_{[\alpha]}f)(x) \leq -\lambda(x) \lvert V_{[\alpha]}f(x)\rvert^2, \quad \forall x\in\R^N, \alpha\in\A_m.
\end{equation}
\section{Auxiliary proofs}\label{app:auxproofs}

\begin{lemma}\label{lem:OACforaddnoise}
    Consider the following SDE in $\R^N$
\begin{equation*}\label{eq:SDEaddnoiceintro}
dX_t=V_0(X_t)dt+\sqrt{2}\sum_{k=1}^N e_idB_t^i, \quad X_0=x,
\end{equation*}
where $\{e_i\}$ are the canonical basis vectors of $\R^N$. If the Obtuse Angle Condition \eqref{eq:OACvector} holds for the above SDE then $V_0$ is unbounded and $X^i_t$ is independent of $X^j_t$ for each $t>0$ and $i\neq j$.
\end{lemma}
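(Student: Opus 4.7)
The plan is to extract local information from the OAC \eqref{eq:OACvector} by restricting $\alpha$ to the single-index multi-indices $(i)$ for $i \in \{1,\ldots,N\}$; these alone will force enough rigidity on $V_0$ to deliver both conclusions. Since the diffusion vector fields $V_i = e_i$ are constant, the bracket formula from Section \ref{sec2} simplifies to
$$
[V_i, V_0](x) \;=\; \frac{\partial V_0(x)}{\partial x}\, e_i \;=\; \bigl(\partial_i V_0^1(x), \ldots, \partial_i V_0^N(x)\bigr)^T,
$$
that is, the $i$-th column of the Jacobian of $V_0$. Substituting $\alpha = (i)$ into \eqref{eq:OACvector} and using $V_{[\alpha]}(x)^T \xi = \xi_i$, I would reduce the OAC to the pointwise inequality
$$
\xi_i \sum_{j=1}^N \xi_j\, \partial_i V_0^j(x) \;\leq\; -\lambda_0\, \xi_i^2, \quad \forall\, x,\xi \in \R^N.
$$

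Next I would fix $x$ and $i$, set $\xi_i = 1$, and let the components $\xi_j$ ($j\neq i$) vary freely. The left-hand side is then affine in these parameters while the right-hand side is constant, so each coefficient $\partial_i V_0^j(x)$ with $j \neq i$ must vanish, leaving $\partial_i V_0^i(x) \leq -\lambda_0$. Applying this for every $i$ shows that the Jacobian of $V_0$ is diagonal everywhere, with diagonal entries uniformly bounded above by $-\lambda_0$. Consequently each $V_0^i$ depends only on $x^i$, is strictly decreasing in $x^i$ with slope at most $-\lambda_0$, and thus is unbounded; this gives the first conclusion.

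For the second conclusion, the drift now decouples and the Brownian motions $B^1,\ldots,B^N$ are independent, so the SDE splits into $N$ pathwise independent one-dimensional equations
$$
dX_t^i \;=\; V_0^i(X_t^i)\,dt + \sqrt{2}\, dB_t^i, \qquad X_0^i = x^i,
$$
immediately yielding independence of the components $X_t^i$ at any fixed $t$. No hard estimate is required: the argument is essentially algebraic, and the only point worth double-checking is that the single-index multi-indices $(1),\ldots,(N)$ indeed belong to $\A_m$ for any admissible $m \geq 1$, which is immediate from the definition of $\A_m$ recalled in Appendix \ref{app:UFG}.
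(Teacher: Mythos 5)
Your proof is correct and takes essentially the same approach as the paper: both extract the pointwise inequality $\xi_i\sum_j \xi_j\,\partial_i V_0^j(x)\le -\lambda_0\,\xi_i^2$ from the OAC with $\alpha=(i)$, then exploit the affine dependence on the free components $\xi_j$ ($j\ne i$) to force the off-diagonal Jacobian entries to vanish and the diagonal ones to be $\le -\lambda_0$, yielding unboundedness by integration and independence via the resulting decoupling of the system. The only cosmetic difference is the order of the two deductions (the paper first takes $\xi=e_i$ to get the diagonal bound, then $\xi=e_i+Ke_k$ with $K\to\pm\infty$ to kill the off-diagonals), which does not change the substance.
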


\begin{proof}[Proof of Lemma \ref{lem:OACforaddnoise}]
In this case the OAC \eqref{eq:OACvector} becomes
\begin{equation*}
\sum_{j=1}^N\partial_iV_0^j(x)\xi^i\xi^j \leq -\lambda_0 \lvert \xi^i\rvert^2, \quad \forall x,\xi\in\R^N, i\in\{1,\ldots, N\}.
\end{equation*}
Fix some $i\in\{1,\ldots,N\}$ and take $\xi=e_i$; then we have
\begin{equation*}
\partial_iV_0^i(x) \leq -\lambda_0, \quad \forall x\in\R^N.
\end{equation*}
Integrating the above gives $V_0^i(x)\leq V_0^i(x_1,\ldots,x_{i-1}, 0,x_{i+1},\ldots,x_N)-\lambda_0 x_i$ for $x_i>0$ and $i\in\{1,\ldots,N\}$. Now letting $x_i$ tend to $\infty$ we must have that $V_0(x)$ is unbounded below.

Moreover, if we take $\xi =e_i+Ke_k$ for some $i\neq k$ then by \eqref{eq:OACvector} we have
\begin{equation*}
\partial_iV_0^i+K\partial_iV_0^k(x)\leq -\lambda_0 , \quad \forall x\in\R^N, i\in\{1,\ldots,N\}, k\neq i, K\in\R.
\end{equation*}
By considering both the cases when $K$ is large and negative, and when $K$ is large and positive we must have that $\partial_iV_0^k=0$ for $k\neq i$. Therefore $X_t^i$ is independent of $X_t^j$ for $i\neq j$. 
\end{proof}

\begin{lemma}\label{lem:simplificationin1d}
	Consider the SDE \eqref{eq:SDEito} when $N=1$, i.e. consider the SDE
	\begin{equation}\label{eq:onedimSDE}
dX_t=U_0(X_t)dt+\sqrt{2}\sum_{k=1}^dV_k(X_t)dB_t^k.
\end{equation}
	Then we may find a vector field $U_1$ such that $X_t$ is a weak solution to the SDE
	\begin{equation}\label{eq:onedimSDEito}
	dX_t=U_0(X_t)dt+\sqrt{2}U_1(X_t)dW_t
	\end{equation}
	for some one-dimensional Brownian motion $\{W_t\}_{t\geq 0}$. Moreover, if the Local Obtuse Angle Condition \eqref{eq:LOAC} is satisfied by the vector fields in \eqref{eq:onedimSDE}, then we have
	\begin{equation*}
	U_1(x)[U_1,V_0](x) \leq -\lambda(x) U_1(x)^2,\quad \text{ for every } x\in\R,
	\end{equation*}
	where $V_0$ is defined by \eqref{eq:Itodrift}.
\end{lemma}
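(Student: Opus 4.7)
Identify the vector fields in \eqref{eq:onedimSDE} with their scalar coefficients, writing $v_k(x)$ for the coefficient of $V_k$ and $u_0(x)$ for the coefficient of $U_0$. The natural candidate for $U_1$ is the scalar $u_1(x) := (\sum_{k=1}^d v_k(x)^2)^{1/2}$, viewed as the differential operator $u_1(x)\partial_x$. With this choice the martingale part of \eqref{eq:onedimSDE} will have the same quadratic variation as the martingale part of \eqref{eq:onedimSDEito}, namely $2u_1(X_t)^2\,dt$, which is what makes the reduction possible.

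\textbf{Step 1: construction of $W$.} Enlarge the probability space (if necessary) to support an independent one-dimensional Brownian motion $\tilde B$, and set $\phi_k(s) := v_k(X_s)/u_1(X_s)$ on the event $\{u_1(X_s)>0\}$, and $\phi_k(s):=0$ on the complement. Define
\[
W_t := \int_0^t \sum_{k=1}^d \phi_k(s)\,dB^k_s + \int_0^t \mathbf{1}_{\{u_1(X_s)=0\}}\,d\tilde B_s.
\]
On $\{u_1(X_s)>0\}$ one has $\sum_k \phi_k(s)^2 = 1$, and on the complement the second integrand contributes $1$ to the quadratic variation density; hence $\langle W\rangle_t = t$ and Lévy's characterization gives that $W$ is a standard Brownian motion. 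Moreover, since $u_1(X_s)\mathbf{1}_{\{u_1(X_s)=0\}}\equiv 0$,
\[
\int_0^t u_1(X_s)\,dW_s = \int_0^t \sum_{k=1}^d v_k(X_s)\,dB^k_s,
\]
so $X_t$ is a weak solution of \eqref{eq:onedimSDEito} with this $U_1$ and $W$.

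\textbf{Step 2: propagation of the LOAC.} The hypothesis is that \eqref{eq:LOAC} holds in particular for each singleton $\alpha=(k)$, $k=1,\dots,d$. In one dimension this reads, for all $x\in\R$,
\[
v_k(x)\bigl(v_k(x)V_0'(x)-V_0(x)v_k'(x)\bigr)\leq -\lambda(x)\, v_k(x)^2.
\]
Summing over $k$, and using the two elementary identities $\sum_k v_k^2 = u_1^2$ and $\sum_k v_k v_k' = u_1 u_1'$ (the latter valid on $\{u_1>0\}$, where $u_1$ is smooth), one obtains
\[
u_1(x)\bigl(u_1(x)V_0'(x)-V_0(x)u_1'(x)\bigr)\leq -\lambda(x)\, u_1(x)^2,
\]
which is exactly $U_1(x)[U_1,V_0](x)\leq -\lambda(x)U_1(x)^2$. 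On $\{u_1=0\}$ both sides of this final inequality vanish identically, so no extra argument is needed there.

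\textbf{Main obstacle.} The only subtlety is regularity: $u_1$ is only continuous (Lipschitz where the $v_k$ are Lipschitz) and can fail to be smooth at points where all $v_k$ vanish simultaneously, so $U_1$ falls outside the smooth framework used in the main text. This is the reason only a \emph{weak} solution is claimed in \eqref{eq:onedimSDEito}, and it is why the construction of $W$ must be carried out via the enlargement trick above rather than by simply inverting $U_1$ in the stochastic integral. The identity $u_1 u_1' = \sum_k v_k v_k'$ used in Step 2 is likewise only guaranteed on $\{u_1>0\}$, but this is harmless since the target inequality is trivial on the zero set.
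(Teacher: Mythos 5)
Your proof is correct and follows essentially the same route as the paper: same choice $U_1=(\sum_k V_k^2)^{1/2}$, L\'evy characterization for $W$, and the two identities $\sum_k V_k^2=U_1^2$, $\sum_k V_kV_k'=U_1U_1'$ to sum the singleton LOAC over $k$ and obtain $U_1[U_1,V_0]\leq-\lambda\,U_1^2$. You are in fact somewhat more careful than the paper on the zero set of $U_1$ (the enlargement of the probability space so that $W$ is well defined even where all $V_k$ vanish, and the separate observation that the target inequality is trivial there), points the published proof passes over silently; the only step in the paper that you omit, and which is not needed for the stated conclusion, is the short verification that $X_t$ also solves the Stratonovich SDE $dX_t=V_0(X_t)\,dt+\sqrt{2}\,U_1(X_t)\circ dW_t$.
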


\begin{proof}
	Define the process
	\begin{equation*}
	W_t=\sum_{i=1}^d\int_0^t \frac{V_i(X_s^{x})}{\sqrt{\sum_{j=1}^d\lvert V_j(X_s^{(x)})\rvert^2}} dB_s^i.
	\end{equation*}
	By the Levy Characterisation of Brownian motion (see \cite[Theorem 3.3.16]{KaratzasShreve}), $W_t$ is a one-dimensional Brownian motion. With this in mind, we have
	\begin{align*}
	dX_t^{(x)} &= U_0(X_t^{(x)})dt + \sqrt{2}\sum_{i=1}^d V_i(X_t^{(x)}) dB_t^i\\
	&=U_0(X_t^{(x)})dt + \sqrt{2}\left(\sum_{j=1}^d\lvert V_j(X_t^{(x)})\rvert^2\right)^\frac{1}{2} dW_t\\
	&=U_0(X_t^{(x)})dt + \sqrt{2}U_1(X_t^{(x)}) dW_t,
	\end{align*}
	where we set
	\begin{equation*}
	U_1(x) := \left(\sum_{j=1}^d\lvert V_j(x)\rvert^2\right)^{\frac{1}{2}}.
	\end{equation*}
	Since
	\begin{equation*}
	U_1(x)U_1'(x) = U_1(x) \sum_{j=1}^d V_j(x)V_j'(x)\left(\sum_{i=1}^d\lvert V_i(x)\rvert^2\right)^{-\frac{1}{2}} = \sum_{j=1}^d V_j(x)V_j'(x)
	\end{equation*}
	we have that $X_t^{(x)}$ satisfies the Stratonovich SDE
	\begin{equation*}
	dX_t^{(x)}= V_0(X_t^{(x)})dt +\sqrt{2}U_1(X_t^{(x)})\circ dW_t.
	\end{equation*}
	
	Note that
	\begin{align*}
	U_1(x)[U_1,V_0](x) &= U_1(x)^2V_0'(x)-U_1(x)U_1'(x)V_0(x)\\
	&=\sum_{j=1}^d\lvert V_j(x)\rvert^2V_0'(x)-V_j(x)V_j'(x)V_0(x)\\
	&=\sum_{j=1}^d V_j(x)[V_j,V_0](x).
	\end{align*}
	Therefore, if \eqref{eq:LOAC} is satisfied, we have
	\begin{equation}\label{eq:OACforU}
	U_1(x)[U_1,V_0](x) \leq -\lambda(x) \sum_{j=1}^d \lvert V_j(x)\rvert^2=-\lambda(x) \lvert U_1(x)\rvert^2.
	\end{equation}
\end{proof}

Note that the transformation in Lemma \ref{lem:simplificationin1d} does not necessarily preserve the UFG condition however it will preserve a local version of it, the LFG which we recall below.
\begin{definition}[LFG Condition]
Let $\{V_i: i=0 \dd d \}$ be a collection of 
smooth vector fields on $\R^N$ and assume that the coefficients of such vector fields have bounded partial derivatives (of any order). We say that the vector fields  $\{V_i: i=0 \dd d \}$ satisfy the LFG condition if for each $x\in \R$ there exists some neighbourhood $\mathcal{O}$ of $x$, and some $m\in\N$ such that for any $\alpha \in \A$ of the form 
$$
\alpha = \alpha'\ast i, \qquad    \alpha' \in \A_m, \, i \in \{0 \dd d\}, $$ 
 one can find  smooth functions $\varphi_{\alpha, \beta}=\varphi_{\alpha, \beta}(x) \in C^{\infty}_V(\R^N)$ such that
\begin{equation}\label{eq:LFG}
V_{[\alpha]}(y) = \sum_{\beta \in \A_m} \varphi_{\alpha,\beta}(y) V_{[\beta]} (y) \quad \forall y\in \mathcal{O}.
\end{equation}
\end{definition}

Let us now recall that a one dimensional SDE with multiplicative noise can be recast into a (one-dimensional) SDE with additive noise by using a Lamperti transformation, see \cite[Section 5.2.C]{KaratzasShreve}, assuming the coefficients of the initial SDE are bounded and satisfy an ellipticity condition. 

\begin{lemma}\label{lem:lamperti}
    Consider a one-dimensional SDE with multiplicative noise of the form \eqref{eq:onedimSDEito} and suppose the vector field $U_1$ appearing in \eqref{eq:onedimSDEito} is such that \eqref{eq:onedimSDEito} is uniformly elliptic. Then we can construct a smooth diffeomorphism $h$ such that $Y_t:=h(X_t)$  is the solution to 
    \begin{equation}\label{eq:transformedSDE}
dY_t=b_Y(Y_t)dt+\sqrt{2}dB_t
\end{equation}
    for some smooth function $b_Y$. Moreover, \eqref{eq:onedimSDEito} satisfies the Obtuse Angle condition \eqref{OAC1d} with constant $\lambda_0$ if and only if $b_Y'\leq -\lambda_0$.
\end{lemma}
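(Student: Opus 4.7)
My plan is to implement the classical Lamperti transformation and then verify by direct computation that the Obtuse Angle Condition expressed in the $X$ coordinate coincides with the inequality $b_Y' \leq -\lambda_0$ expressed in the $Y$ coordinate. By uniform ellipticity there exists $\nu > 0$ with $U_1(x)^2 \geq \nu$ for every $x$, so $U_1$ has constant sign; without loss of generality $U_1 > 0$. Define
$$h(x) := \int_0^x \frac{dz}{U_1(z)},$$
which is smooth and strictly increasing with $h'(x) = 1/U_1(x) \in (0, 1/\sqrt{\nu}]$. Hence $h$ is a diffeomorphism of $\R$ onto an open interval; that this interval is all of $\R$ (so that $h^{-1}$ is defined on all of $\R$) follows from the standing growth assumptions on $U_1$ relevant to the paper (for instance $U_1$ bounded, or at most linearly growing, makes $1/U_1$ non-integrable at $\pm \infty$).

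Applying It\^o's formula to $Y_t := h(X_t)$, and using $h''(x) = -U_1'(x)/U_1(x)^2$ together with $(dX_t)^2 = 2 U_1(X_t)^2 \, dt$, a direct computation yields
$$dY_t = \left[\frac{U_0(X_t)}{U_1(X_t)} - U_1'(X_t)\right] dt + \sqrt{2}\, dB_t,$$
so $Y_t$ solves \eqref{eq:transformedSDE} with
$$b_Y(y) := \frac{U_0(h^{-1}(y))}{U_1(h^{-1}(y))} - U_1'(h^{-1}(y)).$$

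For the equivalence, I first recall from \eqref{eq:Itodrift} that the Stratonovich drift associated to \eqref{eq:onedimSDEito} is $V_0 = U_0 - U_1 U_1'$. In one dimension, \eqref{OAC1d} applied with $V = U_1$ reduces, after dividing the scalar inequality by $U_1(x) > 0$, to
$$U_1(x) V_0'(x) - U_1'(x) V_0(x) \; \leq \; -\lambda_0\, U_1(x).$$
Substituting $V_0 = U_0 - U_1 U_1'$, expanding the left-hand side and dividing once more by $U_1(x)$ collapses it to
$$U_0'(x) - \frac{U_1'(x) U_0(x)}{U_1(x)} - U_1(x) U_1''(x).$$
Differentiating $b_Y$ via the chain rule, using $(h^{-1})'(y) = U_1(h^{-1}(y))$, produces exactly the same expression for $b_Y'(y)$ at $x = h^{-1}(y)$. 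Hence the OAC with constant $\lambda_0$ for \eqref{eq:onedimSDEito} is equivalent to $b_Y' \leq -\lambda_0$.

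The proof presents no genuine analytic difficulty; the only step requiring care is the Stratonovich-It\^o conversion, namely remembering that \eqref{OAC1d} is stated for the Stratonovich drift $V_0 = U_0 - U_1 U_1'$ rather than for the It\^o drift $U_0$. Once that identification is made, the agreement between the two reduced expressions is a purely algebraic check, and a secondary bookkeeping item is ensuring $h$ surjects onto $\R$ so that $b_Y$ is genuinely defined on all of $\R$.
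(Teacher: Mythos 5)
Your proof is correct and follows essentially the same route as the paper's: apply the Lamperti transformation $h(x)=\int 1/U_1$, compute $b_Y$, and check that $b_Y'$ coincides with $[U_1,V_0]U_1/U_1^2$ so that the OAC becomes $b_Y'\leq-\lambda_0$. The paper's version is marginally shorter because it first rewrites $b_Y = V_0/U_1$ using \eqref{eq:Itodrift}, so a single differentiation exposes the commutator $[U_1,V_0]$ directly in the numerator, whereas you expand through $U_0$ and match the two sides term by term; both paths are the same calculation in different bookkeeping.
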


\begin{proof}[Proof of Lemma \ref{lem:lamperti}] 
Consider the one dimensional SDE in It\^o form \eqref{eq:onedimSDEito}. By the uniform ellipticity assumption there is some constant $\nu>0$ such that $U_1(x)\geq \nu$ for all $x\in\R$. Fix some arbitrary $x_0\in\R$ and define the function $h$ as follows
\begin{equation*}
h(x) = \int_{x_0}^x \frac{1}{U_1(y)}dy.
\end{equation*}
Let $Y_t=h(X_t)$, then $Y_t$ is a strong solution of the SDE \eqref{eq:transformedSDE} where
\begin{equation*}
b_Y(y) = \frac{U_0(h^{-1}(y))}{U_1(h^{-1}(y))}-U_1'(h^{-1}(y))\stackrel{\eqref{eq:Itodrift}}{=} \frac{V_0(h^{-1}(y))}{U_1(h^{-1}(y))}.
\end{equation*}

The derivative of $b_Y$ is given by
\begin{align*}
b_Y'(y) & =\frac{V_0'(h^{-1}(y))U_1(h^{-1}(y))-U_1'(h^{-1}(y))V_0(h^{-1}(y))}{U_1(h^{-1}(y))^2} \frac{d}{dy}h^{-1}(y)\\
&= \frac{[U_1,V_0](h^{-1}(y)) U_1(h^{-1}(y))}{U_1(h^{-1}(y))^2}.
\end{align*}
From the above the statement follows.
\end{proof}

\begin{lemma}\label{lem:onlyneedlevel1}
Consider the one dimensional SDE \eqref{eq:SDEmain1d}. If the UFG condition (see Appendix \ref{app:UFG}) and the LOAC \eqref{eq:LOACforV1} hold then  for all $x\in\R$, 
\begin{equation}\label{eq:equalspans}
\mathrm{span}(V_1(x)) = \mathrm{span}(V_{[\alpha]}(x): \alpha\in\A_m).
\end{equation}
\end{lemma}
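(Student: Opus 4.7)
Since $N=1$, both $\mathrm{span}(V_1(x))$ and $\mathrm{span}(V_{[\alpha]}(x):\alpha\in\A_m)$ are subspaces of $\R$, hence each is either $\{0\}$ or $\R$. The inclusion $\mathrm{span}(V_1(x))\subseteq \mathrm{span}(V_{[\alpha]}(x):\alpha\in\A_m)$ is automatic since $V_1=V_{[(1)]}$, so the content of the lemma reduces to showing: if $V_1(x_0)=0$ then $V_{[\alpha]}(x_0)=0$ for every $\alpha\in\A_m$. Writing $V_0=a(x)\partial_x$ and $V_1=b(x)\partial_x$ and setting $V_{[\alpha]}=c_\alpha\partial_x$, the task is to prove $c_\alpha(x_0)=0$ for all $\alpha\in\A_m$ whenever $b(x_0)=0$.

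First I would use the LOAC to force $[V_1,V_0](x_0)=0$. Rewriting the LOAC as $b(x)\bigl[(ba'-ab')(x)+\lambda(x)b(x)\bigr]\leq 0$ and using the lower bound $\lambda\geq -\kappa$ yields $(ba'-ab')(x)\leq \kappa\,b(x)$ on $\{b>0\}$; letting $x\to x_0$ through such points gives $[V_1,V_0](x_0)\leq 0$, and a symmetric argument through $\{b<0\}$ yields $[V_1,V_0](x_0)\geq 0$. If $b$ does not change sign at $x_0$, then $x_0$ is a local extremum of $b$, so $b'(x_0)=0$ and hence $[V_1,V_0](x_0)=-a(x_0)b'(x_0)=0$ directly. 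In every case one obtains $a(x_0)b'(x_0)=0$, so either $a(x_0)=0$ or $b'(x_0)=0$.

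In the subcase $a(x_0)=b(x_0)=0$, a straightforward induction based on the identity $[f\partial_x,g\partial_x]=(fg'-gf')\partial_x$ shows that every monomial of $c_\alpha$ contains at least one undifferentiated factor from $\{a,b\}$ (the property is preserved under brackets, starting from the base fields $V_0=a\partial_x$ and $V_1=b\partial_x$), so $c_\alpha(x_0)=0$ for every $\alpha$. In the remaining subcase $a(x_0)\neq 0$ and $b'(x_0)=0$, the plan is to show that $b$ must in fact vanish to infinite order at $x_0$: a Taylor computation on the nested brackets $F_j:=c_{(1,\underbrace{0,\ldots,0}_{j})}$, defined recursively by $F_{j+1}=F_j a'-a F_j'$, establishes that whenever $b^{(i)}(x_0)=0$ for all $i<k$ one has $F_k(x_0)=(-a(x_0))^{k}b^{(k)}(x_0)$. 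Combining this with the UFG relation expressing $V_{[(1,0^{k})]}$ as a $C^\infty$-combination of $\{V_{[\beta]}:\beta\in\A_m\}$, together with an outer induction on $\|\beta\|$ (whose base is $V_{[(1)]}(x_0)=0$ and $V_{[(1,0)]}(x_0)=0$, and whose step handles $\beta=\beta'\ast 0$ by showing $c_{\beta'}$ vanishes to sufficiently high order at $x_0$ via the same UFG--Taylor mechanism), one forces $F_k(x_0)=0$ and hence $b^{(k)}(x_0)=0$ for every $k$. Once $b$ is flat at $x_0$, a second monomial observation --- every non-trivial bracket of $V_0,V_1$ involves $V_1$ at least once, so every monomial of $c_\alpha$ carries a factor $b^{(i)}$ for some $i$ --- delivers $c_\alpha(x_0)=0$ for every $\alpha\in\A_m$.

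The main obstacle is the combinatorial bookkeeping in the second subcase: the finite-depth UFG condition must be iterated via the Leibniz rule on the smooth structure coefficients $\varphi_{\alpha,\beta}$ to produce the infinite tower of vanishings $b^{(j)}(x_0)=0$ for all $j\geq 1$, while simultaneously keeping track of the growing vanishing orders of $c_\beta$ at $x_0$ needed to close the outer induction at each step. It is this delicate interplay between the finite depth of UFG and the infinite order of vanishing of $b$ that makes the argument subtle; by contrast, the LOAC step and the two monomial-counting observations are elementary.
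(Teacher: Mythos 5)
Your route is genuinely different from the paper's. The paper's proof runs a direct induction over $\alpha\in\A_m$: it assumes $V_{[\alpha]}(x_0)=0$, Taylor-expands, and applies the LOAC inequality \emph{to $V_{[\alpha]}$ itself} (i.e.\ the general form \eqref{eq:LOAC}, rather than only the $V_1$-instance \eqref{eq:LOACforV1} quoted in the lemma) to force $[V_{[\alpha]},V_0](x_0)=0$; the induction then closes since $[V_{[\alpha]},V_1](x_0)=0$ is automatic. You instead keep only the stated hypothesis, deduce $[V_1,V_0](x_0)=0$ from the LOAC for $V_1$, and split on $a(x_0)$. Your LOAC step and the case $a(x_0)=b(x_0)=0$ are both correct.

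The gap is in the case $a(x_0)\neq 0$, $b'(x_0)=0$. The outer induction on $\|\beta\|$ is circular: to get $c_{\beta'\ast 0}(x_0)=0$ you need $c_{\beta'}'(x_0)=0$, because $c_{\beta'\ast 0}(x_0)=c_{\beta'}(x_0)a'(x_0)-a(x_0)c_{\beta'}'(x_0)=-a(x_0)c_{\beta'}'(x_0)$, whereas the inductive hypothesis only gives $c_{\beta'}(x_0)=0$. Already at $\beta'=(1,0)$ this amounts to $b''(x_0)=0$, the very first unproven vanishing. UFG cannot supply the missing derivative information: it rewrites a deep bracket as a $C^\infty$-combination of $\{V_{[\beta]}:\beta\in\A_m\}$, but evaluating that combination at $x_0$ requires knowing the $V_{[\beta]}(x_0)$, which is precisely the statement under proof. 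You flag this as the ``main obstacle'' but do not resolve it, so as written the argument is incomplete.

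Fortunately this case does not need UFG at all: the LOAC for $V_1$ alone forces $b$ to be flat at $x_0$, by Taylor-expanding $b$ directly rather than the nested brackets $F_j$. If $b$ vanishes at $x_0$ to finite order $n\geq 1$ (so $b^{(n)}(x_0)\neq 0$ while lower derivatives vanish), then
\begin{equation*}
b(x)\bigl(ba'-ab'\bigr)(x)=-\frac{a(x_0)\bigl(b^{(n)}(x_0)\bigr)^2}{n!\,(n-1)!}(x-x_0)^{2n-1}+o\bigl((x-x_0)^{2n-1}\bigr),
\end{equation*}
while the LOAC together with $\lambda\geq-\kappa$ bounds the left-hand side above by $\kappa\,b(x)^2=O((x-x_0)^{2n})$. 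Since $2n-1$ is odd, dividing by $|x-x_0|^{2n-1}$ and letting $x\to x_0^{\pm}$ forces $a(x_0)\bigl(b^{(n)}(x_0)\bigr)^2=0$, a contradiction. Hence $b^{(n)}(x_0)=0$ for every $n$, and your second monomial observation then finishes. With this replacement your case-split argument is complete and, unlike the paper's own proof, uses only the hypothesis \eqref{eq:LOACforV1} as literally stated.
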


\begin{proof}[Proof of Lemma \ref{lem:onlyneedlevel1}]
Fix some $x_0\in\R$; if $V_1(x_0) \neq 0$ then we have $\mathrm{span}(V_1(x_0))=\R$ and \eqref{eq:equalspans} follows immediately. 

If $V_1(x_0)= 0$, we want to prove by induction that   $V_{[\alpha]}(x_0)=0$ for every $\alpha\in\A_m$. To this end, suppose $V_{[\alpha]}(x_0)=0$ for some $\alpha \in \A_m$;   then we may use Taylor's theorem to obtain the following expansions
\begin{align*}
    V_{[\alpha]}(x) &= V_{[\alpha]}'(x_0)(x-x_0) + O((x-x_0)^2)\\
    V_{[\alpha]}'(x) &= V_{[\alpha]}'(x_0)+V_{[\alpha]}''(x_0)(x-x_0) + O((x-x_0)^2)\\
    V_0(x) &= V_0(x_0) + V_0'(x_0)(x-x_0) + O((x-x_0)^2)\\
    V_0'(x) &= V_0'(x_0) + V_0''(x_0)(x-x_0) + O((x-x_0)^2)\\
    V_1(x) &= V_{1}'(x_0)(x-x_0) + O((x-x_0)^2).
\end{align*}
Here $O((x-x_0)^n)$ denotes functions $f$ such that for some neighbourhood of $x_0$, there is some constant $C>0$ such that
\begin{equation*}
   \lvert f(x)\rvert \leq C\lvert x-x_0 \rvert^n.
\end{equation*}
Substituting these expansions into the definition of $[V_{[\alpha]},V_0]$ we have
\begin{align*}
    [V_{[\alpha]},V_0](x) &= V_{[\alpha]}(x)V_0'(x)-V_0(x)V_{[\alpha]}'(x) \\
    &=-V_{[\alpha]}'(x_0)V_0(x_0)-V_{[\alpha]}''(x_0)V_0(x_0)(x-x_0)+ O((x-x_0)^2).
\end{align*}
Then, expanding the left hand side and right hand side of \eqref{eq:LOACforV1}, we have
\begin{align*}
    V_{[\alpha]}(x)[V_{[\alpha]},V_0](x) & = -\lvert V_{[\alpha]}'(x_0)\rvert^2V_0(x_0)(x-x_0)\\
    &-V_{[\alpha]}'(x_0)V_{[\alpha]}''(x_0)V_0(x_0)(x-x_0)^2+ O((x-x_0)^3)\\
    \end{align*}
    and 
    \begin{align*}
    \lvert V_{[\alpha]}(x)\rvert^2 &= V_{[\alpha]}'(x_0)^2(x-x_0)^2 + O((x-x_0)^3);
\end{align*}
hence, by \eqref{eq:LOACforV1}, 
	\begin{align*}
	-\lvert V_{[\alpha]}'(x_0)\rvert^2V_0(x_0)(x-x_0)-V_{[\alpha]}'(x_0)V_{[\alpha]}''(x_0)V_0(x_0)(x-x_0)^2 & \leq -\lambda(x) V_{[\alpha]}'(x_0)^2(x-x_0)^2 \\
	&+ O((x-x_0)^3).
	\end{align*}	
	Rearranging the above gives
	\begin{equation*}
	-V_0(x_0)V_{[\alpha]}'(x_0)^2\frac{(x-x_0)}{\lvert x-x_0\rvert} \leq 	\left(V_{[\alpha]}'(x_0)V_0(x_0)V_{[\alpha]}''(x_0)-\lambda(x) V_{[\alpha]}'(x_0)^2\right)\frac{(x-x_0)^2}{\lvert x-x_0\rvert} + O((x-x_0)^2).
	\end{equation*}	
	Suppose that $V_0(x_0)V_{[\alpha]}'(x_0)\neq 0$; then letting $x$ tend to $x_0$, we obtain a contradiction. Therefore $V_0(x_0)V_{[\alpha]}'(x_0)$ must be equal to zero which implies that $[V_{[\alpha]},V_0](x_0)$ is equal to zero as well. Moreover, since $V_1(x_0)=V_{[\alpha]}(x_0)=0$, we also have $[V_{[\alpha]},V_1](x_0)=0$. Then by induction we have $V_{[\alpha]}(x_0)=0$ for all $\alpha\in \A_m$. This concludes the proof.
\end{proof}

\begin{lemma}\label{lem:hypcheckforadditivenoiseex}
	Consider the SDE \eqref{eq:SDEadditivenoise1dim}. If there exists $R>0$ such that  $\mathrm{sign}(x)b(x)<0$ whenever $\lvert x\rvert\geq R$ then Hypothesis \ref{hyp:upperbound} holds with the sequence $u_n(x)$ as in \eqref{unlemb6} below and 
	\begin{equation*}
	    \DV(x) = \alpha^2+\alpha b(x)\tanh(\alpha x) \, ,
	\end{equation*}
	where $\alpha>0$ is any positive constant. 
	Moreover, if $b$ is unbounded (both above and below) then Hypothesis \ref{hyp:Donskerhyp} also holds.
\end{lemma}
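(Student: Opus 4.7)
The plan is to follow the construction sketched in Note \ref{rem:Donskerhyp}: first exhibit an eigenfunction-type pair $(u,\Xi)$ satisfying $\cL u = \Xi\cdot u$, then localize it to produce the sequence $\{u_n\}$. Taking $u(x) := \cosh(\alpha x)$, direct differentiation gives $u'/u = \alpha\tanh(\alpha x)$ and $u''/u = \alpha^2$, so
\begin{equation*}
\frac{\cL u(x)}{u(x)} = b(x)\frac{u'(x)}{u(x)} + \frac{u''(x)}{u(x)} = \alpha^2 + \alpha b(x)\tanh(\alpha x) = \Xi(x),
\end{equation*}
exactly as claimed. To produce $u_n\in D(\cL)$, I would fix a smooth odd cutoff $\theta:\R\to\R$ with $\theta(y) = y$ for $|y|\leq 1$, $\theta$ smooth and monotone on $[1,2]$, and $\theta(y) = 2$ for $y\geq 2$, and set $u_n(x) := \cosh\bigl(\alpha n\theta(x/n)\bigr)$. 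By construction each $u_n$ is smooth, coincides with $u$ on $[-n,n]$, and is constant outside $[-2n,2n]$, so $u_n \in D(\cL)$.

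Conditions \ref{ass:lowerboundun}--\ref{ass:limitdefofV} are then routine. Since $\cosh\geq 1$ we have \ref{ass:lowerboundun}. Because $|\theta(y)|\leq|y|$, we get $u_n(x) \leq \cosh(\alpha x)$ uniformly in $n$, which proves \ref{ass:upperboundforun}. For \ref{ass:limitdefofV}, fix $x\in\R$; once $n > |x|$ we have $\theta(x/n) = x/n$ on a neighbourhood of $x$, hence $u_n\equiv u$ there, and therefore $\cL u_n(x)/u_n(x) = \Xi(x)$ for all sufficiently large $n$, so the sequence is eventually constant at $\Xi(x)$.

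The main obstacle is the uniform upper bound \ref{ass:boundsforLun}. A chain-rule calculation yields
\begin{equation*}
\frac{\cL u_n(x)}{u_n(x)} = \alpha b(x)\tanh\bigl(\alpha n\theta(x/n)\bigr)\theta'(x/n) + \alpha^2\bigl(\theta'(x/n)\bigr)^2 + \frac{\alpha}{n}\tanh\bigl(\alpha n\theta(x/n)\bigr)\theta''(x/n),
\end{equation*}
and the last two terms are bounded uniformly in $n$ and $x$ because $\theta', \theta''$ and $\tanh$ are bounded (and $n\geq 1$). For the first term I would case-split on $|x|$: on $|x|\leq n$ the expression reduces to $\alpha b(x)\tanh(\alpha x)$, which is bounded above since the sign hypothesis yields $b(x)\tanh(\alpha x)\leq 0$ for $|x|\geq R$ while on $[-R,R]$ the continuous function $\Xi$ is automatically bounded; on $|x|\geq 2n$, $\theta'(x/n)=0$ and the expression vanishes; on the transition region $n\leq |x|\leq 2n$ the same sign argument applies because $\tanh(\alpha n\theta(x/n))$ and $x$ share their sign, yielding $b(x)\tanh(\alpha n\theta(x/n))\leq 0$ whenever $|x|\geq R$, and the residual case $|x|<R$ forces $n<R$, confining everything to the fixed compact set where continuity provides a bound.

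For the additional claim under the assumption that $b$ is unbounded (both above and below), I would verify \ref{ass:compact}: combining unboundedness with the sign hypothesis forces $b(x)\to-\infty$ as $x\to+\infty$ and $b(x)\to+\infty$ as $x\to-\infty$, and since $\tanh(\alpha x)\to\pm 1$ we obtain $b(x)\tanh(\alpha x)\to-\infty$ as $|x|\to\infty$; hence $\Xi(x)\to-\infty$ and its superlevel sets $\{\Xi\geq \ell\}$ are compact. Conditions \ref{ass:denpos}--\ref{ass:dencts} follow from the uniform ellipticity of \eqref{eq:SDEadditivenoise1dim}, which gives a smooth, strictly positive transition density with continuous dependence on the starting point in $L^1$ via standard parabolic regularity estimates.
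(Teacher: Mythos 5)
Your proof is correct and follows essentially the same strategy as the paper's: take $u(x)=\cosh(\alpha x)$ as an approximate eigenfunction, localize via $u_n(x)=\cosh(\alpha n\theta(x/n))$, verify conditions (1b)--(1e) by the case split $|x|\leq n$, $n\leq|x|\leq 2n$, $|x|\geq 2n$, and argue (1a) from the coercivity of $\Xi$. In fact your chain-rule identity
\begin{equation*}
\frac{\cL u_n(x)}{u_n(x)} = \alpha b(x)\theta'\bigl(\tfrac{x}{n}\bigr)\tanh\bigl(\alpha n\theta(\tfrac{x}{n})\bigr) + \frac{\alpha}{n}\theta''\bigl(\tfrac{x}{n}\bigr)\tanh\bigl(\alpha n\theta(\tfrac{x}{n})\bigr)+ \alpha^2\bigl(\theta'(\tfrac{x}{n})\bigr)^2
\end{equation*}
is the carefully computed version of the expression the paper displays, which appears to have a couple of minor typos in the powers of $\alpha$ and $\theta'$; your bound on the residual terms works identically. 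The one place where you are a little quick (and the paper is equally quick) is the final step: unboundedness of $b$ together with the sign condition does not by itself \emph{force} $b(x)\to-\infty$ as $x\to+\infty$; $b$ could stay negative on $[R,\infty)$ yet repeatedly come back near $0$, in which case $\Xi$ would not have compact superlevel sets. This is really an imprecision in the statement of the lemma rather than a flaw specific to your argument, since for the paper's intended examples (monotone or eventually monotone drifts) the deduction is valid; if you wanted to be airtight you would replace ``$b$ unbounded'' by ``$|b(x)|\to\infty$ as $|x|\to\infty$''.
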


\begin{proof}[Proof of Lemma \ref{lem:hypcheckforadditivenoiseex}]
	Let $u(x)=\cosh(\alpha x)$ and define $\theta:\R\to\R$ as in Note \ref{rem:Donskerhyp}. 
	Then Hypothesis \ref{hyp:upperbound} is satisfied with the functions
	\begin{align}
	u_n(x) &= u\left(n\theta\left(\frac{x}{n}\right)\right), \label{unlemb6}\\
	\DV(x) &= \alpha b(x)\tanh(\alpha x)+\alpha^2,\nonumber
	\end{align}
	as we come to explain.
	 By construction $u_n(x) \leq u(x)$ for all $x\in\R$ and $n\in\N$, so for each compact set $W\subseteq \R$
	\begin{equation*}
	\sup_{x\in W} \sup_{n\in\N} u_n(x) \leq \sup_{x\in W}  u(x) <\infty,
	\end{equation*}
	so Hypothesis \ref{hyp:Donskerhyp} \ref{ass:upperboundforun}.
	Now for fixed $x\in\R$ and $n >\lvert x\rvert$ we have $u_n(x)=u(x)$ and
	\begin{equation*}
	\frac{\cL u(x)}{u(x)} = \alpha b(x)\tanh(\alpha x)+\alpha^2=\DV(x),
	\end{equation*}
	therefore Hypothesis \ref{hyp:Donskerhyp} \ref{ass:limitdefofV} is satisfied. Moreover we see that if $\lvert x\rvert \leq n$ then 
	\begin{equation*}
	\frac{\cL u_n}{u_n} \leq \DV(x) \leq A_1,
	\end{equation*}
	where $A_1$ is the maximum value of $\DV$. Now if $\rvert x\lvert\geq 2n$ then $u_n(x)$ is constant and we have
	\begin{equation*}
	\frac{\cL u_n}{u_n} =0.
	\end{equation*}	
	However if $n\leq \lvert x\rvert \leq 2n$ then
	\begin{equation*}
	\frac{\cL u_n(x)}{u_n(x)} = \left(\alpha b(x)\theta'\left(\frac{x}{n}\right) + \alpha^2\frac{1}{n}\theta''\left(\frac{x}{n}\right)\right)\tanh\left(\alpha n\theta\left(\frac{x}{n}\right)\right) + \alpha^2\theta'\left(\frac{x}{n}\right).
	\end{equation*}
	We may assume that $n$ is sufficiently large that $b(x)\tanh(\alpha n\theta(\frac{x}{n}))<0$. Then since $\theta$ is increasing on $[1,2]$ and is an odd function we have that $b(x)\theta'(x/n)\tanh(\alpha n\theta(x/n))<0$ thus 
	\begin{equation*}
	\frac{\cL u_n(x)}{u_n(x)}\leq   \alpha^2\frac{1}{n}\theta''\left(\frac{x}{n}\right)\tanh\left(\alpha n\theta\left(\frac{x}{n}\right)\right) + \alpha^2\theta'\left(\frac{x}{n}\right) \leq \alpha^2\sup_{y\in[1,2]} [\lvert\theta''(y)\rvert + \theta'(y)]=:A_2.
	\end{equation*}
	Therefore \eqref{eq:boundsforLun} holds with $A=\max\{A_1,A_2\}$.
	
	Moreover, if $b$ is unbounded (both above and below) and $\mathrm{sign}(x)b(x)<0$ for $x$ sufficiently large, we see that the set $\{x\in\R:\DV(x)\geq \ell\}$ is compact for each $\ell\in\R$.
\end{proof}

\begin{proof}[Proof of Theorem \ref{prop:LOAChd}]	
Analogously to the one-dimensional setting we shall denote by $\J_t=\J_t^x=\frac{\partial}{\partial x}X_t^{(x)}$ the $N\times N$ matrix valued process which denotes the derivative of $X_t^{(x)}$ with respect to $x$; this exists by \cite[Theorem 7.3]{Kunita} and can be viewed as the solution of
\begin{equation}\label{eq:SDEforJacobianhd}
d\J_t^{x}= \left(\jacobian{U_0}{x}\right)(\Xx)\J_t^{x} dt + \sqrt{2}\sum_{i=1}^d\left(\jacobian{V_i}{x}\right)(\Xx)\J_t^{x} dB_t^i, \quad \J_0^{x}=\mathrm{Id}\,.
\end{equation}	
With this notation in place, we  rewrite derivatives of the semigroup in terms of derivatives of the process $X_t^{(x)}$. By a completely analogous argument to the proof of Lemma \ref{lem:derivativeofsemigp} we have
	\begin{align}
	V\cP_tf(x) = 
	\mathbb{E}[\nabla f(\Xx)^T \J_t V(x)]\label{eq:devofsemigpintermsofVhd}
	\end{align}
	for every $x\in \R^N$ and $f\in \CV$. For clarity we emphasize that here $\nabla f(\Xx)$ denotes the  gradient of $f$ evaluated at $\Xx$ and that on the LHS of \eqref{eq:devofsemigpintermsofVhd} $V$ is intended as a differential operator while on the RHS we view it as a vector field.  Let us introduce the two parameter random process $\{\Gamma_{s,t}^V\}_{0\leq s\leq t}$, defined as follows:
\begin{equation*}
\Gamma_{s,t}^V = \left\lvert \nabla f(\Xx)^T\J_t\J_s^{-1}V(X_s^{(x)})\right\rvert^2.
\end{equation*}
Notice that by \eqref{eq:devofsemigpintermsofVhd} we have
\begin{align*}
\lvert V\cP_tf(x) \rvert^2&\leq \mathbb{E}\left[\left\lvert  \nabla f(\Xx)^T\J_tV(x)\right\rvert^2\right]= \mathbb{E}[\Gamma_{0,t}^V],
\end{align*}
and moreover, (using that $f$ belongs to $\CV$) we may estimate $\Gamma_{t,t}^V$ by
\begin{equation*}
\Gamma_{t,t}^V =  \lvert Vf(X_t^{(x)})\rvert^2\leq \lVert Vf\rVert_\infty^2.
\end{equation*}	 
Hence to prove \eqref{eq:gradesthd} it is sufficient to prove the following inequality
\begin{equation}\label{eq:requiredestimatehd}
\mathbb{E}[\Gamma_{0,t}^V]\geq  \mathbb{E}\left[\exp\left(-2\int_0^t \lambda(X_s^{(x)})ds \right)\Gamma_{t,t}^V\right].
\end{equation}
We will use \cite[Equation (2.63)]{Nualart} which, in our notation and setting, can be written as
	\begin{equation}\label{eq:jacinvhd}
	d(J_t^{-1}V(\Xx)) = J_t^{-1}[V_0,V](X_t^{(x)})dt+\sqrt{2}\sum_{k=1}^dJ_t^{-1}[V_k,V](X_t^{x})\circ dB_t^k.
	\end{equation}
Because of our commutativity assumption, the commutator in front of the noise in \eqref{eq:jacinvhd}  disappears; with this in mind we obtain
	\begin{align*}
	d\left(\J_t^{-1}V(\Xx)V(\Xx)^T(J_t^{-1})^T\right)&=   \J_t^{-1}[V_0,V](\Xx)V(\Xx)^T(\J_t^{-1})^Tdt\\
	&+ \J_t^{-1}V(\Xx)[V_0,V](\Xx)^T(\J_t^{-1})^Tdt.
	\end{align*}
	Integrating from $0$ to $s$, multiplying by $\nabla f(\Xx)^T \J_t$ on the left and $\J_t^T\nabla f(\Xx)$ on the right one gets
	\begin{align*}
	&\left\lvert \nabla f(\Xx)^T\J_t\J_s^{-1} V(\Xx) \right\rvert^2 =\left\lvert \nabla f(\Xx)^T\J_t V(x)\right\rvert^2\\
	&+ 2\int_0^s \nabla f(\Xx)^T \J_t\J_r^{-1}[V_0,V](X_r^{(x)})V(X_r^{(x)})^T(\J_r^{-1}\J_t)^T \nabla f(\Xx)dr.
	\end{align*}
	As in the one dimensional setting we may define $f_{s,t}=f\circ \Phi_{s,t}$, so that $(\nabla f_{s,t} (X_s^{(x)}))^T= (\nabla f(X_t^{(x)}))^T J_t J_s^{-1}$,  and we have
	\begin{align*}
	\left\lvert  Vf_{s,t}(X_s^{(x)}) \right\rvert^2 &=\left\lvert Vf_{0,t}(x)\right\rvert^2+ 2\int_0^s [V_0,V]f_{r,t}(X_r^{(x)})Vf_{r,t}(X_r^{(x)})dr.
	\end{align*}
	Now we may apply \eqref{LOAC1d} and obtain
	\begin{align*}
	\left\lvert  Vf_{s,t}(X_s^{(x)}) \right\rvert^2 &\geq\left\lvert Vf_{0,t}(x)\right\rvert^2+ 2\int_0^s\lambda(X_t^{(x)}) \lvert Vf_{r,t}(X_r^{(x)})\rvert^2dr.
	\end{align*}
	We can rewrite this in terms of $\Gamma_{s,t}^V$ as
	\begin{align*}
	\Gamma_{s,t}^V &\geq \Gamma_{0,t}^V+ 2\int_0^s\lambda(X_r^{(x)}) \Gamma_{r,t}^V dr.
	\end{align*}
	That is,
	\begin{equation}\label{eq:Gammaineqhd}
	\exp\left(-2\int_0^s(\lambda(X_r^{(x)}))  dr \right)\Gamma_{s,t}^V \geq \Gamma_{0,t}^V. 
	\end{equation}
	Taking expectations and setting $s=t$ one obtains \eqref{eq:requiredestimatehd}. This concludes the proof.
\end{proof}

\begin{lemma}\label{lem:expmomentsforarctan}
    Let $X_t^{(x)}$ be the solution of the one-dimensional SDE \eqref{eq:arctanSDE}. Then
    \begin{equation}\label{eq:expmomentsofarctansde}
        \sup_{t\geq 0}\mathbb{E}[\cosh( X_t^x)] <\infty.
    \end{equation}
    Moreover, let $\{Y_{t_n}^\delta\}_{n\in\N}$ denote the Euler approximation of $\{X_t^{(x)}\}$ with initial condition $x$; then there exists some $\delta^\ast>0$ such that for all $\delta\in (0,\delta^\ast)$ we have
    \begin{equation*}
    \sup_{n\in \N} \mathbb{E}[\cosh( Y_{t_n}^\delta)]<\infty.
    \end{equation*}
\end{lemma}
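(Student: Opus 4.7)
The plan is to show that $\cosh$ is a Lyapunov function for both the continuous-time dynamics and for the Euler scheme (provided $\delta$ is small enough), relying in both cases on the fact that $\arctan(x)\tanh(x) \to \pi/2 > 1$ as $\lvert x\rvert \to \infty$.

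For the continuous-time statement \eqref{eq:expmomentsofarctansde}, I would first compute
\begin{equation*}
\mathcal{L}\cosh(x) = -\arctan(x)\sinh(x) + \cosh(x) = \cosh(x)\bigl[1 - \arctan(x)\tanh(x)\bigr].
\end{equation*}
Because $\arctan(x)\tanh(x)$ tends to $\pi/2$ as $\lvert x\rvert\to\infty$, there exist $R>0$ and $\beta>1$ such that $\arctan(x)\tanh(x)\geq \beta$ whenever $\lvert x\rvert\geq R$, and on the compact set $\{\lvert x\rvert\leq R\}$ the quantity $\mathcal{L}\cosh(x)$ is bounded. Hence one obtains a Lyapunov-type inequality
\begin{equation}\label{eq:planLyap}
\mathcal{L}\cosh(x) \leq -\gamma \cosh(x) + C, \qquad x\in\R,
\end{equation}
for some $\gamma>0$ and $C>0$. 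Applying It\^o's formula to $\cosh(X_t)$ and using a standard localization via stopping times $\tau_M=\inf\{t\geq 0:\lvert X_t\rvert\geq M\}$ (to handle the unboundedness of $\cosh$), I would take expectations, let $M\to\infty$ by Fatou, and obtain
\begin{equation*}
\frac{d}{dt}\mathbb{E}[\cosh(X_t^x)] \leq -\gamma\, \mathbb{E}[\cosh(X_t^x)] + C,
\end{equation*}
from which Gronwall gives $\mathbb{E}[\cosh(X_t^x)] \leq \cosh(x) e^{-\gamma t} + C/\gamma$ and hence \eqref{eq:expmomentsofarctansde}.

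For the discrete-time statement, I would exploit the fact that the Euler scheme has an explicit additive Gaussian increment: conditional on $Y_{t_n}^\delta = y$, the next step satisfies $Y_{t_{n+1}}^\delta = y - \arctan(y)\delta + \sqrt{2\delta}\, Z$ with $Z\sim\mathcal{N}(0,1)$. Using the moment generating function of a Gaussian,
\begin{equation*}
\mathbb{E}\bigl[\cosh(Y_{t_{n+1}}^\delta)\,\big|\,Y_{t_n}^\delta=y\bigr] = \cosh\bigl(y-\arctan(y)\delta\bigr)\, e^{\delta}.
\end{equation*}
For $\lvert y\rvert\to\infty$ the ratio $\cosh(y-\arctan(y)\delta)/\cosh(y)$ tends to $e^{-\pi\delta/2}$ (as $\arctan(y)$ has the sign of $y$ and tends to $\pm\pi/2$), so the one-step multiplier tends to $e^{\delta(1-\pi/2)}$. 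Since $1-\pi/2<0$, for every $\delta>0$ there is a threshold $R_\delta$ such that the multiplier is strictly below some $\rho<1$ outside $[-R_\delta,R_\delta]$, and on the compact set $\lvert y\rvert\leq R_\delta$ the conditional expectation is bounded by a constant. This yields a discrete Lyapunov inequality
\begin{equation*}
\mathbb{E}\bigl[\cosh(Y_{t_{n+1}}^\delta)\,\big|\,Y_{t_n}^\delta\bigr] \leq \rho\,\cosh(Y_{t_n}^\delta) + C_\delta,
\end{equation*}
valid for $\delta<\delta^\ast$, and iteration gives $\sup_n\mathbb{E}[\cosh(Y_{t_n}^\delta)]\leq \cosh(x)+C_\delta/(1-\rho)$.

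The main technical step is the quantitative verification that the one-step multiplier $e^{\delta}\cosh(y-\arctan(y)\delta)/\cosh(y)$ stays below a fixed $\rho<1$ uniformly outside a compact set; everything else is either a direct computation (Gaussian expectation of $\cosh$) or a standard Lyapunov/Gronwall argument. The sign condition $\mathrm{sign}(y)\arctan(y)>0$ is what ensures that the drift step moves $y$ toward zero, so the only real subtlety is that the drift magnitude saturates at $\pi/2$, which still beats the diffusion contribution $e^{\delta}$ because $\pi/2>1$.
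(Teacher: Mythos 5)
Your plan is correct; the continuous-time half is essentially the paper's argument (Lyapunov function $\cosh$, inequality $\mathcal{L}\cosh\le -\gamma\cosh + C$) with Meyn--Tweedie replaced by a hands-on It\^o/localization/Gronwall argument, which is a standard and harmless substitution. The discrete-time half, however, is genuinely different from the paper's. Both start from the same exact Gaussian identity $\mathbb{E}[\cosh(Y_{t_{n+1}}^\delta)\mid Y_{t_n}^\delta=y]=e^\delta\cosh(y-\arctan(y)\delta)$, but the paper then Taylor-expands $\cosh(y-\arctan(y)\delta)$ to second order in $\delta$, producing a recursion coefficient $a=e^\delta(1-\alpha\delta+\tfrac{\pi^2}{8}\delta^2)$ which is $<1$ only for $\delta$ small, hence the $\delta^*$ in the statement. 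You instead look at the one-step multiplier $e^\delta\cosh(y-\arctan(y)\delta)/\cosh(y)$ asymptotically in $y$, observe that it tends to $e^{\delta(1-\pi/2)}<1$, and pick $R_\delta$ so that it is $\le\rho<1$ off $[-R_\delta,R_\delta]$. This is both cleaner and strictly stronger: because $\pi/2>1$ the limiting multiplier is below $1$ for \emph{every} $\delta>0$, so your route in fact removes the small-$\delta$ restriction (the lemma only asserts the weaker statement, so this is of course still a valid proof). One small point worth tightening when you write it up: to convert the local bound $\mathbb{E}[\cdot\mid y]\le\rho\cosh(y)$ (for $\lvert y\rvert>R_\delta$) and $\mathbb{E}[\cdot\mid y]\le M_\delta$ (for $\lvert y\rvert\le R_\delta$) into a single affine recursion $\mathbb{E}[\cosh(Y_{t_{n+1}}^\delta)\mid Y_{t_n}^\delta]\le\rho\cosh(Y_{t_n}^\delta)+M_\delta$, you use that $\rho\cosh(y)\ge 0$, which is trivial but should be stated; and in the continuous part it is slightly cleaner to apply It\^o to $e^{\gamma t}\cosh(X_t)$ and then stop and take limits, rather than asserting differentiability of $t\mapsto\mathbb{E}[\cosh(X_t^x)]$ before its finiteness is established.
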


\begin{proof}[Proof of Lemma \ref{lem:expmomentsforarctan}]
    Start by observing that for any $\alpha\in (1,\pi/2)$ we may find $\beta\in\R$ such that the following inequality holds
    \be\label{eq:Lyapunovcondforarctan}
    -\arctan(x)\sinh(x) \leq \beta-\alpha\cosh(x), \text{ for every } x\in\R.
    \ee
    This implies that $\cosh(x)$ is a Lyapunov function for the SDE \eqref{eq:arctanSDE} or, more precisely, we have 
    \begin{equation*}
    \cL \cosh(x) = -\arctan(x)\sinh(x) + \cosh(x) \leq \beta-(\alpha-1)\cosh(x).
    \end{equation*}
    Then by \cite[Theorem 2.1]{Meyn} this implies that \eqref{eq:expmomentsofarctansde} holds.  
    Now by \eqref{eq:eulerdiscrete} we have
    \begin{align*}
        \mathbb{E}[\cosh(Y_{t_{n+1}}^\delta)] &= \mathbb{E}[\cosh(Y_{t_{n}}^\delta-\arctan(Y_{t_n}^\delta)\delta + \sqrt{2}\Delta B_{t_n})]\\
        &=\mathbb{E}[\cosh(Y_{t_{n}}^\delta-\arctan(Y_{t_n}^\delta)\delta)\cosh(\sqrt{2}\Delta B_{t_n})] \\&+ \mathbb{E}[\sinh(Y_{t_{n}}^\delta-\arctan(Y_{t_n}^\delta)\delta)\sinh(\sqrt{2}\Delta B_{t_n})]. 
    \end{align*}
    Now using the independence of $Y_{t_n}^\delta$ and $\Delta B_{t_n}$ and the fact that $\Delta B_{t_n}$ is distributed according to a Gaussian random variable with mean zero and variance $\delta$ we have
       \begin{equation}\label{eq:expofcosh}
          \mathbb{E}[\cosh(Y_{t_{n+1}}^\delta)]=e^\delta\mathbb{E}\left[\cosh\left(Y_{t_{n}}^\delta-\arctan(Y_{t_n}^\delta)\delta \right)\right].   
       \end{equation}
    By Taylor's theorem we know that there exists some $\theta=\theta(Y_{t_n}^\delta, \delta)\in (0,1)$ such that
    \begin{align}\label{eq:coshtaylorexp}
    \cosh\left(Y_{t_{n}}^\delta-\arctan(Y_{t_n}^\delta)\delta \right) &= \cosh(Y_{t_n}^\delta) -\arctan(Y_{t_n}^\delta)\sinh(Y_{t_n}^\delta)\delta \nonumber\\
    &+ \frac{1}{2}\arctan(Y_{t_n}^\delta)^2 \delta^2 \cosh(Y_{t_n}^\delta - \theta \delta \arctan(Y_{t_n}^\delta)).
    \end{align}
    Note that if $\lvert Y_{t_n}^\delta \rvert >\pi/2$ then $\cosh(Y_{t_n}^\delta - \theta \delta \arctan(Y_{t_n}^\delta))\leq \cosh(Y_{t_n}^\delta)$ otherwise $\cosh(Y_{t_n}^\delta - \theta \delta \arctan(Y_{t_n}^\delta))\leq \cosh(\pi \delta/2)$, so overall 
    \begin{equation}\label{eq:coshdetestimate}
    \cosh(Y_{t_n}^\delta - \theta \delta \arctan(Y_{t_n}^\delta))\leq \cosh(\pi \delta/2)+\cosh(Y_{t_n}^\delta).
    \end{equation}
    Hence we can bound the right hand side of \eqref{eq:coshtaylorexp} from above using that \eqref{eq:coshdetestimate} and \eqref{eq:Lyapunovcondforarctan}, to obtain 
      \begin{equation*}
    \cosh\left(Y_{t_{n}}^\delta-\arctan(Y_{t_n}^\delta)\delta \right) \leq  \cosh(Y_{t_n}^\delta) +(\beta-\alpha\cosh(Y_{t_n}^\delta))\delta + \frac{\pi^2}{8} \delta^2 \cosh(Y_{t_n}^\delta)+ \frac{\pi^2}{8} \delta^2 \cosh\left(\frac{\pi}{2} \delta\right).
    \end{equation*}
    Substituting this inequality into \eqref{eq:expofcosh} we have
    \begin{equation*}
   \mathbb{E}[\cosh(Y_{t_{n+1}}^\delta)]\leq e^\delta(1-\alpha\delta+\frac{\pi^2}{8}\delta^2)\mathbb{E}[\cosh(Y_{t_n}^\delta)] +e^\delta\beta\delta+ e^\delta\frac{\pi^2}{8} \delta^2 \cosh\left(\frac{\pi}{2} \delta\right).     
    \end{equation*}
    By recursion on the right hand side of the above inequality we get
    \begin{equation*}
    \mathbb{E}[\cosh(Y_{t_{n}}^\delta)]\leq \mathbb{E}[\cosh(Y_{0}^\delta)] a^n + \frac{b(1-a^n)}{1-a}
    \end{equation*}
    where $a=e^\delta(1-\alpha\delta+\frac{\pi^2}{8}\delta^2)$ and $b=e^\delta\beta\delta+ e^\delta\frac{\pi^2}{8} \delta^2 \cosh\left(\frac{\pi}{2} \delta\right)$. Now note that for $\delta$ sufficiently small we have that $0<a<1$, in which case 
    \begin{equation*}
    \mathbb{E}[\cosh(Y_{t_{n}}^\delta)]\leq \mathbb{E}[\cosh(Y_{0}^\delta)] + \frac{b}{1-a}.
    \end{equation*}
    This gives the required estimate since $a$ and $b$ do not depend on $n$.
\end{proof}

\begin{proof}[Proof of Lemma \ref{lem:higherorderOACforadditivenoise}]
    We shall only prove \eqref{eq:expdecayuptoorder4nonuniform} for $\partial_x^4\cP_tf(x)$ as the other estimates follow by a simpler version of the same argument. Recall $J_t$ is defined as $J_t=\frac{\partial}{\partial x}X_t^{(x)}$ and we can similarly define higher order derivatives as $J_t^{(n)} = \frac{\partial^n}{\partial x^n}X_t^{(x)}$.
    
    Using the chain rule and then taking expectations, similiarly to Lemma \ref{lem:derivativeofsemigp}, we obtain
    \begin{align}
    \partial_x^4\cP_tf(x)  &=\mathbb{E}\bigg[ f^{(4)}(X_t^{(x)})J_t^4 + 6f^{(3)}(X_t^{(x)})J_t^2J_t^{(2)} \\
    &+3f''(X_t^{(x)})(J_t^{(2)})^2+4f''(X_t^{(x)})J_tJ_t^{(3)}+ f'(X_t^{(x)})J_t^{(4)}\bigg]\\&
    \leq K\lVert f\rVert_{C_b^4(\R)} \mathbb{E}\left[ J_t^4 + J_t^2\lvert J_t^{(2)}\rvert +(J_t^{(2)})^2+J_t\lvert J_t^{(3)}\rvert+ \lvert J_t^{(4)}\rvert\right].\label{eq:expansionof4derofsemigp}
    \end{align}
    Now we proceed by estimating each of these terms in turn. Note that in the case at hand \eqref{eq:SDEforJacobian} simplifies to
    \begin{equation}\label{eq:jacODE}
        dJ_t = b'(X_t^{(x)})dt
    \end{equation}
    which we can solve to find
     \begin{equation}\label{eq:explicitJ}
        J_t = \exp\left(\int_0^t b'(X_s^{(x)})ds\right).
    \end{equation}
Differentiating \eqref{eq:explicitJ} we obtain the following expressions for the higher order derivatives,
    \begin{align*}
        J_t^{(2)} &= J_t \int_0^t b''(X_s^{(x)})J_s ds \\
        J_t^{(3)} &= \left(\int_0^t b^{(3)}(X_s^{(x)}) J_s^2 ds+\int_0^t b''(X_s^{(x)})J_s^{(2)}ds \right)J_t\\
        &+ J_t^{(2)}\int_0^t b''(X_s^{(x)})J_sds \\
        J_t^{(4)} &=\left(\int_0^t b^{(4)}(X_s^{(x)})J_s^3 +3b^{(3)}(X_s^{(x)})J_sJ_s^{(2)} + b''(X_s^{(x)})J_s^{(3)} ds\right)J_t\\
        &+\left(\int_0^t 3b^{(3)}(X_s^{(x)})J_s^2 +2b''(X_s^{(x)})J_s^{(2)}ds\right)J_t^{(2)}\\
        &+ J_t^{(3)}\int_0^t b''(X_s^{(x)})J_s ds.
    \end{align*}
    From here it is straight forward to see that the conclusion holds.

    \end{proof}

\thebibliography{10}

\bibitem{Bakry} D. Bakry, I. Gentil and M. Ledoux. {\em Analysis and geometry of Markov Diffusion operators}. Springer, 2014.

\bibitem{CCDO}
Cass, T., Crisan, D., Dobson, P. and Ottobre, M., 2018. Long-time behaviour of degenerate diffusions: UFG-type SDEs and time-inhomogeneous hypoelliptic processes. arXiv preprint arXiv:1805.01350.

\bibitem{CrisanDelarue} D. Crisan, F. Delarue, 
Sharp derivative bounds for solutions of degenerate semi-linear partial differential equations, J. Funct. Anal.  263,  no. 10, 3024-3101, 2012.

\bibitem{CrisanGhazali} D. Crisan and S. Ghazali. {\em On the convergence rates of a general class of weak approximations of SDEs}. Stochastic differential equations: theory and applications, 221–248, 2007.

\bibitem{CrisanLitterer}D.~Crisan, C. Litterer, T. Lyons. {\em Kusuoka--Stroock gradient bounds for the solution of the filtering equation}. JFA, 7, 2015. 

\bibitem{Crisan}  D.~Crisan, K.~Manolarakis, C.Nee. {\em Cubature methods and applications}.  Paris-Princeton Lectures on Mathematical Finance, 2013.

\bibitem{CrisanMcMurray} D.~Crisan and E.~McMurray. {\em Cubature on Wiener Space for McKean-Vlasov SDEs with Smooth Scalar  Interaction}. http://arxiv.org/abs/1703.04177v1

\bibitem{CrisanOttobre} D.~Crisan, M.~Ottobre. {\em Pointwise gradient bounds for degenerate semigroups (of UFG type).} Proc. R. Soc. A 472.2195 (2016): 20160442.

\bibitem{Paul} P. Dobson. {\em A pathwise approach to the Bakry-Emery theory for derivative estimates for Markov Semigroups} work in progress

\bibitem{Donsker1}
Donsker, Monroe D., and SR Srinivasa Varadhan. "Asymptotic evaluation of certain Markov process expectations for large time, I." Communications on Pure and Applied Mathematics 28.1 (1975): 1-47.

\bibitem{Donsker2}
Donsker, M. D., and S. R. S. Varadhan. "Asymptotic evaluation of certain Markov process expectations for large time, II." Communications on Pure and Applied Mathematics 28.2 (1975): 279-301.

\bibitem{Donsker3}
Donsker, M. D., and S. R. S. Varadhan. "Asymptotic evaluation of certain Markov process expectations for large time—III." Communications on pure and applied Mathematics 29.4 (1976): 389-461.

\bibitem{Donsker4}
Donsker, Monroe D., and SR Srinivasa Varadhan. "Asymptotic evaluation of certain Markov process expectations for large time. IV." Communications on Pure and Applied Mathematics 36.2 (1983): 183-212.

\bibitem{Dragoni}
F. Dragoni, V. Kontis, B.~Zegarli\'nski, {\em Ergodicity of Markov Semigroups with H\"ormander Type Generators in Infinite Dimensions}. J. Pot. Anal. 37  (2011), 199--227.

\bibitem{KaratzasShreve} I. Karatzas, and S. Shreve. {\em Brownian motion and stochastic calculus}. Vol. 113. Springer Science \& Business Media, 2012.

\bibitem{KloedenPlaten} P. Kloeden and E. Platen. {|em Numerical Solutions of Stochastic Differential Equations}. Springer, 1992.

\bibitem{MV_I} 
V.~Kontis, M.~Ottobre,  B.~Zegarli\'nski. \emph{Markov semigroups with hypocoercive-type generator in infinite dimensions: ergodicity and smoothing}, Journal of Functional Analysis, 2016.

\bibitem{Kunita} H. Kunita. {\em Stochastic differential equations and stochastic flows of diffeomorphisms}. École d'Été de Probabilités de Saint-Flour XII-1982. Springer, Berlin, Heidelberg, 1984. 143-303.

\bibitem{Kus03} S. Kusuoka. {\em Malliavin calculus revisited}. J. Math. Sci. Univ. Tokyo, 10 (2003), 261–277.

\bibitem{KusStr82}S. Kusuoka and D.W. Stroock. {\em Applications of the Malliavin Calculus -- I}. Stochastic analysis (Katata/Kyoto, 1982)
(1982), 271--306.

\bibitem{KusStr85} S. Kusuoka and D.W. Stroock. {\em Applications of the Malliavin Calculus -- II}. Journal of the Faculty of Science, Univ. of
Tokyo 1 (1985) 1--76.

\bibitem{KusStrder} S. Kusuoka and D.W. Stroock.  {\em Long Time Estimates for the Heat Kernel Associated with a Uniformly Subelliptic Symmetric Second Order Operator}.  Annals of Mathematics, 
Second Series, Vol. 127, No. 1 (Jan., 1988), pp. 165--Talay189

\bibitem{KusStr87} S. Kusuoka, D.W. Stroock. {\em Applications of the Malliavin Calculus -- III}. Journal of the Faculty of Science, Univ. of
Tokyo 2 (1987), 391--442.

\bibitem{cub1} S. Kusuoka. {\em Approximation of expectations of diffusion processes based on Lie algebra
and Malliavin calculus}. UTMS, 34, 2003.

\bibitem{Lamberton}D. Lamberton and G. Pages. {\em Recursive computation of the invariant distribution of a diffusion: the case of a weakly mean-reverting drift.} Stoch. Dyn, 2003.

\bibitem{Lord} C. Kelly, G. J. Lord. {\em Adaptive timestepping strategies for nonlinear stochastic systems} IMA Journal of Numerical Analysis, 2017.

\bibitem{Lunardi} A. Lunardi. {On the Ornstein-Uhlenbeck operator in $L^2$ spaces with respect to invariant measures. } Trans. Amer. Math. Soc., 
Volume 349, Number 1, January 1997, pages 155--169. 

\bibitem{Mattingly}
J. C. Mattingly, , A. M. Stuart, and D. J. Higham. {\em Ergodicity for SDEs and approximations: locally Lipschitz vector fields and degenerate noise.} Stochastic processes and their applications 101.2 (2002): 185-232.

\bibitem{Meyn}
S. P. Meyn and R. L. Tweedie. {\em Stability of Markovian processes III: Foster–Lyapunov criteria for continuous-time processes.} Advances in Applied Probability 25.3 (1993): 518-548.

\bibitem{Nagapetyan}
Nagapetyan, T., Duncan, A.B., Hasenclever, L., Vollmer, S.J., Szpruch, L. and Zygalakis, K., 2017. {\em The true cost of stochastic gradient Langevin dynamics}. arXiv preprint arXiv:1706.02692.

\bibitem{Nualart} D. Nualart. {\em The Malliavin calculus and related topics}. Vol. 1995. Berlin: Springer, 2006.

\bibitem{mythesis}
M.~Ottobre. \emph{Asymptotic Analysis for Markovian models in non-equilibrium Statistical Mechanics}, Ph.D Thesis, Imperial College London, 2012.

\bibitem{Priola} E.Priola, F.Y. Wang. {\em  Gradient estimates for diffusion semigroups with singular coefficients}. Journal of Functional Analysis, 2006

\bibitem{Nee} C. Nee. {\em Sharp gradient bounds for the diffusion semigroup}. PhD Thesis, Imperial College London, 2011.

\bibitem{Talay}
D.~Talay. {\em Second-order discretization schemes of stochastic differential systems for the computation of the invariant law.} Stochastics: An International Journal of Probability and Stochastic Processes 29.1 (1990): 13-36.

\bibitem{TalayTubaro} D. Talay and L. Tubaro. {\em Expansion of the global error for numerical schemes
solving stochastic differential equations.} Stochastic Anal. Appl. 8(4), 483--509, 1990.

\bibitem{Villani}
C.~Villani,
\newblock Hypocoercivity.
\newblock { Mem. Amer. Math. Soc.}, 202 (950) 2009.

\end{document}